\documentclass{article}

\usepackage{amsmath, amssymb, amsthm}
\usepackage{mathtools}
\usepackage{mathrsfs}
\usepackage{bm}
\usepackage{extarrows}
\usepackage{geometry}
\usepackage{float}
\usepackage{indentfirst}
\usepackage{anyfontsize}
\usepackage{booktabs,multirow}
\usepackage{multicol}
\usepackage{enumitem}
\usepackage{appendix}

\usepackage[dvipsnames]{xcolor}
\usepackage{graphicx}
\graphicspath{
    {./figure/}{./figures/}{./image/}{./images/}{./graphic/}
    {./graphics/}{./picture/}{./pictures/}
}

\usepackage{subcaption}
\usepackage{epstopdf}

\usepackage[ruled, lined, linesnumbered]{algorithm2e}

\numberwithin{equation}{section}
\numberwithin{figure}{section}
\numberwithin{table}{section}

\usepackage{thmtools}
\declaretheorem[style=plain, name=Theorem, numbered=yes, numberwithin=section]{theorem}

\declaretheorem[style=plain, name=Corollary, numbered=yes, sibling=theorem]{corollary}
\declaretheorem[style=plain, name=Lemma, numbered=yes, numberwithin=section]{lemma}
\declaretheorem[style=definition, name=Definition, numbered=yes, numberwithin=section]{definition}
\declaretheorem[style=remark, name=Remark, numbered=yes, numberwithin=section]{remark}

\usepackage{hyperref}
\hypersetup{colorlinks=true,linkcolor=,citecolor=blue}

\newenvironment{keywords}{\par\vskip 2ex\textbf{Keywords:}\enspace}{\par}

\DeclarePairedDelimiter{\abs}{\lvert}{\rvert}
\DeclarePairedDelimiter{\inner}{\langle}{\rangle}
\DeclarePairedDelimiter{\norm}{\lVert}{\rVert}

\DeclareMathOperator{\tr}{Tr}
\DeclareMathOperator{\qr}{qr}

\usepackage{lipsum}

\title{Temporal-Stability-Enhanced and Energy-Stable Dynamical Low-Rank Approximation for Multiscale Linear Kinetic Transport Equations}

\author{
    Shun Li\thanks{School of Mathematical Sciences, University of Science and Technology of China. Email: {\tt lishun@mail.ustc.edu.cn}.} \and
    Yan Jiang\thanks{School of Mathematical Sciences, University of Science and Technology of China. Email: {\tt jiangy@ustc.edu.cn}.} \and
    Mengping Zhang\thanks{School of Mathematical Sciences, University of Science and Technology of China. Email: {\tt mpzhang@ustc.edu.cn}.}
    \and
    Tao Xiong\thanks{School of Mathematical Sciences, University of Science and Technology of China. Email: {\tt taoxiong@ustc.edu.cn}.}
}

\date{}

\begin{document}

\maketitle

\begin{abstract}
    In this paper, we develop an asymptotic-preserving dynamical low-rank method for the multiscale linear kinetic transport equation.
    The proposed scheme is unconditionally stable in the diffusive regime while preserving the correct asymptotic behavior, and can achieve significant reductions in computational cost through a low-rank representation and large time step stability.
    A low-rank formulation consistent with the discrete energy is introduced under the discrete ordinates discretization, and energy stability of the resulting scheme is established.
    Numerical experiments confirm the energy stability and demonstrate that the method is efficient while maintaining accuracy across different regimes and capturing the correct asymptotic limits.
\end{abstract}

\begin{keywords}
    kinetic transport equation; dynamical low-rank approximation; macro-micro decomposition; energy stability; asymptotic preserving; implicit-explicit
\end{keywords}

\section{Introduction} 

Kinetic transport equation is a fundamental model in fields like nuclear engineering, yet its high-dimensional phase space poses significant computational challenges.
This complexity is compounded by the multi-scale nature of the system, which formally converges to a macroscopic equation as the Knudsen number $\varepsilon \to 0$.
Standard numerical discretizations of the kinetic transport equation often suffer from severe stability restrictions when $\varepsilon$ is small.
Asymptotic-preserving (AP) schemes were introduced to overcome this difficulty by ensuring that the discrete scheme automatically captures the correct macroscopic limit as $\varepsilon \to 0$, without resolving the small-scale dynamics explicitly \cite{jin1999efficient}.
Significant progress has been made in the development of AP methods for multiscale hyperbolic and kinetic equations.
For an overview of AP schemes, we refer the readers to \cite{jin2010asymptotic, hu2017asymptotic, jin2022asymptotic}.
While AP schemes effectively bridge the kinetic and fluid regimes without severe stability constraints, such as those based on a macro-micro decomposition, e.g. \cite{liu2004boltzmann, lemou2008new}, high-dimensionality remains a primary computational bottleneck.

In recent years, low-rank approximation has attracted increasing interest in numerical analysis as a means to mitigate the curse of dimensionality, enabling efficient approximations of solutions to high-dimensional partial differential equations \cite{hackbusch2012tensor, grasedyck2013literature, khoromskij2018tensor, bachmayr2023low,sands2025high}.
A prominent method for time-dependent problems is the dynamical low-rank approximation (DLRA) \cite{koch2007dynamical}, which evolves a low-rank solution by projecting the governing equation onto the tangent space of a fixed-rank manifold. This allows the solution to be approximated by a small number of time-dependent basis functions, substantially reducing computational and memory costs compared with full-rank discretizations.
DLRA has been successfully applied to various high-dimensional kinetic and quantum problems, and recent developments include adaptively updating the basis or allowing rank adaptivity in order to improve accuracy and stability \cite{ceruti2022unconventional, ceruti2022rank, ceruti2024robust, ceruti2024parallel, appelo2025robust,einkemmer2025construction}.
As an alternative approach, step-and-truncation (SAT) method builds a low-rank solution from a traditional full-rank discretization, which updates the solution and then truncates small singular modes at each time step \cite{guo2024conservative,guo2024local,sands2025high}.
In the context of high-dimensional kinetic equations, microscopic fluctuations around a macroscopic quantity often exhibit intrinsic low-rank structures, particularly near equilibrium \cite{einkemmer2025review}.
Low-rank methods, including DLRA and SAT approaches, allow the solution to be represented by a small number of basis functions, can significantly reduce computational and memory costs compared with full-rank discretizations, and have been successfully applied to a variety of kinetic systems \cite{einkemmer2018lowrank, peng2020low, peng2021highorder, einkemmer2021mass, ding2021dynamical, einkemmer2021efficient, coughlin2024robust}. For discussions of low-rank methods for kinetic equations and recent developments, we direct the reader to the review \cite{einkemmer2025review}.

For multiscale kinetic equations, several AP DLRA methods have been proposed specifically for linear kinetic transport equations, which generally combines the macro–micro decomposition with low-rank evolution of the microscopic component \cite{einkemmer2021asymptoticpreserving,einkemmer2024asymptoticpreserving,patwardhan2025parallel}.
In these approaches, an implicit–explicit (IMEX) time discretization is typically adopted, where the stiff collision operator is treated implicitly, while the coupling terms involving the macroscopic density are handled explicitly in the microscopic equation.
Such formulations are formally AP and can be implemented in an explicit way without solving any linear systems. However, in the diffusion limit the resulting scheme of the diffusion type equation for the macroscopic density is usually discretized explicitly in time, leading to a parabolic time step restriction of $\mathcal{O}(\Delta x^2)$ \cite{einkemmer2024asymptoticpreserving}, where $\Delta x$ is the mesh size.
Such a stability bottleneck originates from the explicit macro–micro coupling at the discrete level, rather than from the collision operator itself, motivating the development of AP DLRA schemes which can also be efficient with large time step stability.

In this work, we aim to develop an AP DLRA scheme for the linear kinetic transport equation, which integrates an IMEX scheme with the Schur complement strategy  \cite{peng2021asymptotic} and the basis-update and Galerkin (BUG) or augmented BUG (aBUG) integrator \cite{ceruti2022rank}.
By applying the Schur complement \cite{zhang2006schur} to the macro–micro system, the microscopic variable can be algebraically eliminated to yield a closed system for the macroscopic density  which is unconditionally stable in the diffusive limit \cite{peng2021asymptotic}.
This approach still allows the microscopic component to evolve on a low-rank manifold, but without the restrictive parabolic CFL condition or a need for artificial weighting functions. Another new ingredient is that, we combine the DLRA scheme with the discrete ordinates (also known as $S_N$) discretization, different from \cite{einkemmer2021asymptoticpreserving,einkemmer2024asymptoticpreserving}.
By embedding a quadrature-based energy structure directly into the low-rank ansatz, we rigorously prove that our scheme inherits the energy-dissipative properties of the underlying full-rank discretization. As a result, our framework effectively reduces the computational complexity of high-dimensional kinetic dynamics both in space and time discretizations, while ensuring structural stability and capturing correct diffusive limits.

For the remainder,
Section~\ref{sec:background} briefly reviews the linear kinetic transport equation and the macro–micro decomposition, as well as a dynamical low-rank approximation.
Section~\ref{sec:numerical-method-fullrank} presents an AP full-rank discretization that employs $S_N$ and an IMEX time-stepping, while Section~\ref{sec:numerical-method-lowrank} devotes to low-rank approximation and rigorous proof of energy stability.
Some numerical experiments are presented in Section~\ref{sec:numerical-results}, and concluding remarks are provided in Section~\ref{sec:conclusion}.

\section{Background} \label{sec:background}

\subsection{Kinetic transport equation}
We consider the multiscale linear kinetic transport equation:
\begin{equation}\label{eq:rte-2d}
    \partial_t f + \frac{1}{\varepsilon} \bm{\Omega} \cdot\nabla_{\bm{x}} f = \frac{\sigma^s}{\varepsilon^2}\Big(\inner*{f}_{\bm{\Omega}} - f\Big) - \sigma^a f + \Phi.
\end{equation}
Here, $f = f(t,\bm{x},\bm{\Omega})$ denotes the particle densit at time $t \in \mathbb{R}^+$, position $\bm{x} = (x,y) \in D_{\bm{x}} \subset \mathbb{R}^2$ and angular direction $\bm{\Omega} = (\Omega^x,\Omega^y,\Omega^z) \in D_{\bm{\Omega}} = \mathbb{S}^2$. The source term is denoted by $\Phi = \Phi(t,\bm{x})$.
The average over the angular domain is defined as
\begin{equation} \label{eq:omega-ave}
    \inner*{f}_{\bm{\Omega}} = \frac{1}{|D_{\bm{\Omega}}|} \int_{\mathbb{S}^2} f\,\mathrm{d}\,\bm{\Omega} = \frac{1}{4\pi} \int_{\mathbb{S}^2} f\,\mathrm{d}\,\bm{\Omega}.
\end{equation}
The scalar flux is given by $\rho = \inner*{f}_{\bm{\Omega}}$.
The scattering and absorption coefficients satisfy $\sigma^s(\bm{x}) \geq \sigma^s_0 > 0$ and $\sigma^a(\bm{x}) \geq 0$, respectively.
The Knudsen number $\varepsilon > 0$ denotes the ratio of the mean free path of particles to the characteristic length.

In \cite{lemou2008new}, by following \cite{liu2004boltzmann} a macro-micro decomposition which separates $f$ into an equilibrium and non-equilibrium components is proposed, that is
\[
    f(t,\bm{x},\bm{\Omega}) = \rho(t,\bm{x}) + \varepsilon\,g(t,\bm{x},\bm{\Omega}),
\]
where the macroscopic density $\rho = \inner*{f}_{\bm{\Omega}}$ and the microscopic component $g$ satisfies $\inner*{g}_{\bm{\Omega}} = 0$.
Applying $\inner*{\,\cdot\,}_{\bm{\Omega}}$ and its orthogonal complement to \eqref{eq:rte-2d} yields the following macro-micro system:
\begin{equation}\label{eq:macro-micro-2d}
    \begin{aligned}
        \partial_t \rho + \nabla_{\bm{x}} \cdot \inner*{\bm{\Omega} g}_{\bm{\Omega}}                                                                                                               & = - \sigma^a \rho + \Phi,
        \\
        \varepsilon \partial_t g + \left(\mathbf{I}-\inner*{\,\cdot\,}_{\bm{\Omega}}\right) (\nabla_{\bm{x}} \cdot(\bm{\Omega} g)) + \frac{1}{\varepsilon} \bm{\Omega} \cdot \,\nabla_{\bm{x}}\rho & = - \frac{\sigma^s}{\varepsilon} g - \varepsilon \sigma^a g.
    \end{aligned}
\end{equation}
In the limit $\varepsilon \to 0$, the second equation of the
system \eqref{eq:macro-micro-2d} formally reduces to $\sigma^s g = - \bm{\Omega} \cdot \,\nabla_{\bm{x}}\rho$. Substituting it into the first equation, it leads to the diffusion equation for the macroscopic density $\rho$
\begin{equation}
\label{eq:diffusion}
    \partial_t \rho - \nabla_{\bm{x}} \cdot \left( \inner*{\bm{\Omega} \otimes \bm{\Omega}}_{\bm{\Omega}} \frac{1}{\sigma^s} \nabla_{\bm{x}} \rho
    \right) = - \sigma^a \rho + \Phi, \quad \inner*{\bm{\Omega} \otimes \bm{\Omega}}_{\bm{\Omega}} = \frac{\mathbf{I}}3.
\end{equation}
We refer to \cite{lemou2008new} for more details.

\subsection{Dynamic Low-Rank Approximation}

Here we briefly review the DLRA framework
for time-dependent matrix-valued problems,
which forms the basis of the proposed low-rank discretization of the kinetic equation.
The key idea is to approximate the solution by evolving it on the manifold of low-rank matrices, where the dynamics is obtained by projecting the original equation onto the tangent space of the manifold \cite{koch2007dynamical}.

Let $A(t) \in \mathbb{R}^{m_1 \times m_2}$ satisfy the matrix differential equation
\begin{equation*}
    \partial_t A(t) = \mathcal{F}\bigl(t,A(t)\bigr).
\end{equation*}
Denote by $\mathcal{M}_r$ the manifold of matrices of rank $r$.
A low-rank approximation $Y(t) \in \mathcal{M}_r$ admits the factorization
\begin{equation*}
    A(t) \approx Y(t) = X(t)\,S(t)\,V(t)^\top,
\end{equation*}
where $X(t) \in \mathbb{R}^{m_1 \times r}$, $V(t) \in \mathbb{R}^{m_2 \times r}$, $S(t) \in \mathbb{R}^{r \times r}$.
Both $X(t)$ and $V(t)$ have orthonormal columns.
Let $\mathcal{T}_{Y}(\mathcal{M}_r)$ denote the tangent space of $\mathcal{M}_r$
at $Y \in \mathcal{M}_r$.
The DLRA evolution is defined by the projected equation
\begin{equation}\label{eq:DLRA}
    \partial_t Y(t)
    = \mathcal{P}_{Y(t)}\,
    \mathcal{F}\bigl(t,Y(t)\bigr),
\end{equation}
where $\mathcal{P}_{Y}$ denotes the orthogonal projection onto
$\mathcal{T}_{Y}(\mathcal{M}_r)$.
Let the singular value decomposition (SVD) of $Y \in \mathcal{M}_r$ be
$Y = X S V^\top$. The orthogonal projection $\mathcal{P}_{Y}$ is explicitly given by \cite[Lemma~4.1]{koch2007dynamical}
\begin{equation*}
    \mathcal{P}_{Y}(Z)
    =
    X X^\top Z
    + Z V V^\top
    - X X^\top Z V V^\top,
    \qquad \forall\, Z \in \mathbb{R}^{m_1 \times m_2}.
\end{equation*}

A practical time discretization of \eqref{eq:DLRA} is provided by the BUG integrator \cite{koch2007dynamical},
which advances the low-rank factors via a splitting strategy and avoids explicit projections.
The BUG method preserves a fixed rank $r$ throughout the simulation.
In contrast, the aBUG integrator \cite{ceruti2022rank} enables adaptive rank selection by performing SVD truncation at each step according to a specified relative tolerance $\tau > 0$. For other time integrators, we refer to the recent review \cite{einkemmer2025review}.

\section{Full-rank Method} \label{sec:numerical-method-fullrank}

\subsection{Temporal Discretization}

We begin with a first order IMEX temporal discretization applied to the macro-micro formulation of the linear kinetic transport equation~\eqref{eq:macro-micro-2d}, where the macroscopic density is treated explicitly in the microscopic equation \cite{lemou2008new}
\begin{subequations}
    \label{eq:macro-micro-imex}
    \begin{align}
        \frac{\rho^{n+1} - \rho^n}{\Delta t} + \nabla_{\bm{x}} \cdot \inner*{\bm{\Omega} g^{n+1}}_{\bm{\Omega}}                                                                                                                          & = - \sigma^a \rho^{n+1} + \Phi^{n+1},       \label{eq:macro-micro-imex-1}                              \\
        \varepsilon\,\frac{g^{n+1} - g^n}{\Delta t} + \left(\mathbf{I}-\inner*{\,\cdot\,}_{\bm{\Omega}}\right) \left(\nabla_{\bm{x}} \cdot(\bm{\Omega} g^n)\right) + \frac{1}{\varepsilon} \bm{\Omega} \cdot \,\nabla_{\bm{x}}\,\rho^{n} & = - \frac{\sigma^s}{\varepsilon} g^{n+1} - \varepsilon \sigma^a g^{n+1}. \label{eq:macro-micro-imex-2}
    \end{align}
\end{subequations}
With this choice, the microscopic variable $g$ can be updated explicitly once the macroscopic density $\rho$ is given at previous time level $t_n$, resulting in a simple and decoupled time-stepping procedure.
However, this explicit treatment of the macroscopic density in the diffusive regime as $\varepsilon \to 0$,
leads to an explicit discretization of the diffusion equation and still imposes a strict parabolic time step restriction of the form $\Delta t = \mathcal{O}(\Delta x^2)$ \cite{liu2010analysis}.
Instead, a modified strategy in which the macroscopic density is treated implicitly was proposed \cite{peng2021asymptotic}
\begin{align*}
    \frac{\rho^{n+1} - \rho^n}{\Delta t} + \nabla_{\bm{x}} \cdot \inner*{\bm{\Omega} g^{n+1}}_{\bm{\Omega}}                                                                                                                            & = - \sigma^a \rho^{n+1} + \Phi^{n+1},                                    \\
    \varepsilon\,\frac{g^{n+1} - g^n}{\Delta t} + \left(\mathbf{I}-\inner*{\,\cdot\,}_{\bm{\Omega}}\right) \left(\nabla_{\bm{x}} \cdot(\bm{\Omega} g^n)\right) + \frac{1}{\varepsilon} \bm{\Omega} \cdot \,\nabla_{\bm{x}}\,\rho^{n+1} & = - \frac{\sigma^s}{\varepsilon} g^{n+1} - \varepsilon \sigma^a g^{n+1}.
\end{align*}
This choice ensures that, in the diffusive limit, the scheme degenerates into an implicit discretization of the limiting diffusion equation,
thereby allowing for much larger time steps independent of the spatial resolution.
The implicit discretization of the macroscopic equation results in a coupled linear system:
\begin{align*}
    \begin{bmatrix}
        \left(\frac{1}{\Delta t} + \sigma^a \right) \mathbf{I}                 & \nabla_{\bm{x}} \cdot \inner*{\bm{\Omega} \,\cdot\,}_{\bm{\Omega}}
        \\
        \frac{1}{\varepsilon^2} \bm{\Omega} \cdot \,\nabla_{\bm{x}}(\,\cdot\,) & \left(\frac{1}{\Delta t} + \frac{\sigma^s}{\varepsilon^2} + \sigma^a\right) \mathbf{I}
    \end{bmatrix}
    \begin{bmatrix}
        \rho^{n+1} \\ g^{n+1}
    \end{bmatrix}
     & =
    \begin{bmatrix}
        \frac{1}{\Delta t} \rho^n + \Phi^{n+1} \\
        \frac{1}{\Delta t} g^n - \frac{1}{\varepsilon} \left(\mathbf{I}-\inner*{\,\cdot\,}_{\bm{\Omega}}\right) \left(\nabla_{\bm{x}} \cdot(\bm{\Omega} g^n)\right)
    \end{bmatrix},
\end{align*}
where $\mathbf{I}$ is the identity operator.
A Schur complement is then used to decouple the variables, as proposed in \cite{peng2021asymptotic}.
The system can be written in a block form
\begin{align*}
    \begin{bmatrix}
        A_{11} & A_{12}
        \\
        A_{21} & A_{22}
    \end{bmatrix}
    \begin{bmatrix}
        \rho^{n+1} \\ g^{n+1}
    \end{bmatrix}
     & =
    \begin{bmatrix}
        b_1 \\ b_2
    \end{bmatrix},
\end{align*}
where $A_{22}$ corresponds to the operator $\left(\frac{1}{\Delta t} + \frac{\sigma^s}{\varepsilon^2} + \sigma^a\right) \mathbf{I}$  and is invertible for all $\varepsilon>0$.
The block factorization
\[
    A =
    \begin{bmatrix}
        A_{11} & A_{12} \\
        A_{21} & A_{22}
    \end{bmatrix}
    =
    \begin{bmatrix}
        I & A_{12} A_{22}^{-1} \\
          & I
    \end{bmatrix}
    \begin{bmatrix}
        T &        \\
          & A_{22}
    \end{bmatrix}
    \begin{bmatrix}
        I                  &   \\
        A_{22}^{-1} A_{21} & I
    \end{bmatrix}
\]
identifies the Schur complement $T := A_{11} - A_{12} A_{22}^{-1} A_{21}$ \cite{zhang2006schur}, which defines a closed system for the macroscopic density
\begin{equation}\label{eq:schur-rho}
    T \rho^{n+1} = b_1 -A_{12} A_{22}^{-1} b_2.
\end{equation}
Once $\rho^{n+1}$ is obtained from \eqref{eq:schur-rho}, $g^{n+1}$ can be recovered from
\begin{equation}\label{eq:schur-g}
    g^{n+1} = A_{22}^{-1} (b_2 - A_{21} \rho^{n+1}).
\end{equation}
Substituting the explicit expressions of the operators into \eqref{eq:schur-rho} and \eqref{eq:schur-g} yields
\begin{subequations}
    \label{eq:macro-micro-imex-s-schur}
    \begin{align}
         & \left\{
        \left(\frac{1}{\Delta t} + \sigma^a \right) \mathbf{I} - \nabla_{\bm{x}} \cdot \inner*{\bm{\Omega} \,\cdot\,}_{\bm{\Omega}} R
        \left(\frac{1}{\varepsilon^2} \bm{\Omega} \cdot \,\nabla_{\bm{x}}(\,\cdot\,)\right)
        \right\} \rho^{n+1}
        \notag                                              \\
         & \quad ={} \frac{1}{\Delta t} \rho^n + \Phi^{n+1}
        - \nabla_{\bm{x}} \cdot \inner*{\bm{\Omega} \,\cdot\,}_{\bm{\Omega}}
        R
        \left(
        \frac{1}{\Delta t} g^n - \frac{1}{\varepsilon} \left(\mathbf{I}-\inner*{\,\cdot\,}_{\bm{\Omega}}\right) \left(\nabla_{\bm{x}} \cdot(\bm{\Omega} g^n)\right)
        \right),
        \label{eq:macro-micro-imex-s-schur-1}               \\
         & g^{n+1} ={} R
        \left(
        \frac{1}{\Delta t} g^n - \frac{1}{\varepsilon} \left(\mathbf{I}-\inner*{\,\cdot\,}_{\bm{\Omega}}\right) \left(\nabla_{\bm{x}} \cdot(\bm{\Omega} g^n)\right)
        - \frac{1}{\varepsilon^2} \bm{\Omega} \cdot \,\nabla_{\bm{x}}(\rho^{n+1})
        \right). \label{eq:macro-micro-imex-s-schur-2}
    \end{align}
\end{subequations}
where $R := \left(\frac{1}{\Delta t} + \frac{\sigma^s}{\varepsilon^2} + \sigma^a\right)^{-1}$.

By employing the Schur complement for the solution procedure, we refer to the temporal discretization \eqref{eq:macro-micro-imex-s-schur} described above as the IMEX-S scheme, while the original scheme \eqref{eq:macro-micro-imex} is denoted as the IMEX scheme, for the sake of distinction.
It is worth noting that the linear system involved in the IMEX-S scheme is of modest size and usually diagonally dominant, with coefficient matrices that are independent of time.
As a result, the per-step computational cost of IMEX-S scheme is only slightly higher than that of the IMEX scheme.
The main advantage of IMEX-S scheme lies in its temporal enhanced stability, particularly in the diffusive regime $\varepsilon \to 0$ \cite{peng2021asymptotic}, where the IMEX scheme suffers from strict time step restrictions, which is important for long time simulations.

\subsection{Angular and Spatial Discretization}

We employ the $S_N$ method for angular discretization using the Chebyshev-Legendre (CL) product quadrature rule \cite{lewis1983computational}.
For two-dimensional geometries, the angular variable is discretized into a set of $N_{\bm{\Omega}} = 2N^2$ directions and weights $\{\bm{\Omega}_m, w_m\}_{m=1}^{N_\Omega}$, where $\bm{\Omega}_m = (\Omega_m^x, \Omega_m^y)$ represents the projection of the 3D unit direction onto the $xy$-plane.
Accordingly, the normalization constant $|D_{\bm{\Omega}}|$ in \eqref{eq:omega-ave} is set to $2\pi$ to account for this reduced phase space.

For the spatial discretization, we adopt a first-order upwind scheme on staggered grids following  \cite{kupper2016asymptotic,einkemmer2021asymptoticpreserving}.
We restrict the presentation of the discretization to two spatial dimensions on a rectangular domain $D_{\bm{x}} = [x_L, x_R] \times [y_B, y_T]$.
The spatial domain is discretized by a uniform staggered grid with mesh sizes $\Delta x = \frac{x_R - x_L}{N_x}$ and $\Delta y = \frac{y_T - y_B}{N_y}$ where $N_x, N_y \in \mathbb{N}$.
The cell interfaces are located at $\{x_{i+\frac12}\}_{i=0}^{N_x}$ and $\{y_{j+\frac12}\}_{j=0}^{N_y}$, while the cell centers are given by $\{x_i\}_{i=1}^{N_x}$ and $\{y_j\}_{j=1}^{N_y}$.
The discretization is arranged such that $\rho$ is defined at $(x_i,y_j)$ and $(x_{i+\frac12},y_{j+\frac12})$, whereas $\bm{g}$ is defined at $(x_{i+\frac12},y_j)$ and $(x_{i},y_{j+\frac12})$ (see Figure~\ref{fig:mesh-2d}).

\begin{figure}[htbp]
    \centering
    \includegraphics[width=0.3\textwidth]{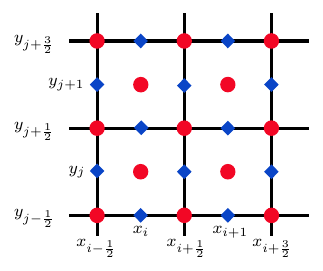}
    \caption{Two-dimensional staggered grid \cite{einkemmer2021asymptoticpreserving}. Red: $\rho$; Blue: $\bm{g}$.} \label{fig:mesh-2d}
\end{figure}

We introduce bijective index maps $B_{\rho}$, and $B_{\bm{g}}$ to reorder the density grids, and the microscopic component grids into linear indices, respectively.
The macroscopic density and microscopic correction can be represented in vector–matrix form, defined by
\[
    \bm{\rho} = \begin{bmatrix}
        \rho_1 \\ \vdots \\ \rho_{N_{\rho}}
    \end{bmatrix} \in \mathbb{R}^{N_{\rho}},
    \quad
    \bm{G}
    =
    \begin{bmatrix}
        g_{1,1}          & \cdots & g_{1,N_{\bm{\Omega}}}          \\
        \vdots           & \ddots & \vdots                         \\
        g_{N_{\bm{g}},1} & \cdots & g_{N_{\bm{g}},N_{\bm{\Omega}}} \\
    \end{bmatrix}
    \in \mathbb{R}^{N_{\bm{g}} \times N_{\bm{\Omega}}}.
\]
where $\rho_{j} = \rho(t,\bm{x}_{B_{\rho}^{-1}(j)})$, $g_{j,\ell} = g(t,\bm{x}_{B_{\bm{g}}^{-1}(j)},\bm{\Omega}_{\ell})$, and $N_{\rho} = N_{\bm{g}} = 2N_x N_y$ for periodic boundary conditions.
Let $\bm{Q}^s = \text{diag}(\Omega^s_j)$ and $|\bm{Q}^s| = \text{diag}(|\Omega^s_j|)$ for $j=1,\dots,N_{\bm{\Omega}}$ and $s \in \{x,y\}$.
We define the upwind matrices $\bm{Q}^{s,\pm} = \frac{1}{2}(\bm{Q}^s \pm |\bm{Q}^s|)$ and introduce the discrete advection operator $\mathcal{A}$, defined by
\begin{equation}
    \label{eq:operatorA}
    \mathcal{A}(\bm{G}^n) =
    \mathcal{D}^{x,-} \bm{G}^n \bm{Q}^{x,+} + \mathcal{D}^{x,+} \bm{G}^n \bm{Q}^{x,-} + \mathcal{D}^{y,-} \bm{G}^n \bm{Q}^{y,+} + \mathcal{D}^{y,+} \bm{G}^n \bm{Q}^{y,-},
\end{equation}
where $\mathcal{D}^{x,\pm}$ and $\mathcal{D}^{y,\pm}$ are the first-order upwind/downwind finite-difference operators along the $x$- and $y$- directions, respectively.
We further define the operators
\begin{subequations}
    \label{eq:operators}
    \begin{align}
        \mathcal{H}(\bm{G}^{n+1}) ={}    & \frac1{|D_{\bm{\Omega}}|} \Big(\mathcal{D}^{x,-} \bm{G}^{n+1} \bm{Q}^x \bm{w}
        + \mathcal{D}^{y,-} \bm{G}^{n+1} \bm{Q}^y \bm{w} \Big), \label{eq:operatorH}
        \\
        \mathcal{J}(\bm{\rho}^{n+1}) ={} &
        \mathcal{D}^{x,+} \bm{\rho}^{n+1} \bm{1}^\top \bm{Q}^x
        + \mathcal{D}^{y,+} \bm{\rho}^{n+1} \bm{1}^\top \bm{Q}^y, \label{eq:operatorJ}
    \end{align}
\end{subequations}
where $\bm{w} = \begin{bmatrix} w_1, \dots, w_{N_{\bm{\Omega}}} \end{bmatrix}^\top$.
With these definitions, a fully discrete IMEX scheme reads
{\small\begin{equation} \label{eq:IMEX-2d}
    \left\{
    \begin{aligned}
         & \frac{\bm{\rho}^{n+1} - \bm{\rho}^n}{\Delta t} + \mathcal{H}(\bm{G}^{n+1})
        = -  \bm{\sigma}^a \bm{\rho}^{n+1} + \bm{\Phi}^{n+1},
        \\
         & \frac{\bm{G}^{n+1} - \bm{G}^n}{\Delta t} + \frac{1}{\varepsilon}  \mathcal{A}(\bm{G}^n) \left(\mathbf{I} - \frac1{|D_{\bm{\Omega}}|} \bm{w} \bm{1}^\top \right) =
        -\frac{1}{\varepsilon^2} \mathcal{J}(\bm{\rho}^{n})
        - \frac{1}{\varepsilon^2} \bm{\sigma}^s \bm{G}^{n+1} - \bm{\sigma}^a \bm{G}^{n+1}.
    \end{aligned}
    \right.
\end{equation}}
Similarly, a fully discrete IMEX-S scheme is given by
{\small\begin{equation} \label{eq:IMEX-S-2d}
    \left\{
    \begin{aligned}
         & \frac{\bm{\rho}^{n+1} - \bm{\rho}^n}{\Delta t} + \mathcal{H}(\bm{G}^{n+1})
        = -  \bm{\sigma}^a \bm{\rho}^{n+1} + \bm{\Phi}^{n+1},
        \\
         & \frac{\bm{G}^{n+1} - \bm{G}^n}{\Delta t} + \frac{1}{\varepsilon}  \mathcal{A}(\bm{G}^n) \left(\mathbf{I} - \frac1{|D_{\bm{\Omega}}|} \bm{w} \bm{1}^\top \right) =
        -\frac{1}{\varepsilon^2} \mathcal{J}(\bm{\rho}^{n+1})
        - \frac{1}{\varepsilon^2} \bm{\sigma}^s \bm{G}^{n+1} - \bm{\sigma}^a \bm{G}^{n+1}.
    \end{aligned}
    \right.
\end{equation}}

To clearly show the dependence on the dimension $d$, the operators in the schemes \eqref{eq:IMEX-2d} and \eqref{eq:IMEX-S-2d} can be written as
\begin{align*}
    \mathcal{A}(\bm{G}) ={} & \sum_{j=1}^d \left( \mathcal{D}^{(j),-} \bm{G} \bm{Q}^{(j),+} + \mathcal{D}^{(j),+} \bm{G} \bm{Q}^{(j),-} \right), \\
    \mathcal{H}(\bm{G}) ={} & \frac{1}{|D_{\bm{\Omega}}|} \sum_{j=1}^d \mathcal{D}^{(j),-} \bm{G} \bm{Q}^{(j)} \bm{w}, \qquad
    \mathcal{J}(\bm{\rho}) ={}  \sum_{j=1}^d \mathcal{D}^{(j),+} \bm{\rho} \bm{1}^\top \bm{Q}^{(j)},
\end{align*}
where $s \in \{1, \dots, d\}$ denotes the spatial dimension index.
The operators $\mathcal{D}^{(j),\pm}$ represent the first-order upwind/downwind finite difference operators along the $j$-th dimension on the staggered grid.
The diagonal matrices $\bm{Q}^{(j)}$ and their upwind components $\bm{Q}^{(j),\pm}$ are constructed from the $j$-th coordinates of the quadrature nodes.

\subsection{Energy stability of the full-rank method}

In the following analysis, we introduce the discrete inner product and the weighted inner product (and the associated norms) as
\begin{align*}
    \inner*{\bm{f}_1, \bm{f}_2}    = \left(\prod_{j=1}^d\,\Delta x^{(j)}\right) (\bm{f}_1)^\top \bm{f}_2, \quad
    \inner*{\bm{F}_1, \bm{F}_2}_w  = \left(\prod_{j=1}^d\,\Delta x^{(j)}\right) \tr\left(\bm{F}_1 \bm{M}^2 (\bm{F}_2)^\top \right),
\end{align*}
for any $\bm{f}_1,\bm{f}_2 \in \mathbb{R}^{N_{\bm{\rho}}}$ and $\bm{F}_1,\bm{F}_2 \in \mathbb{R}^{N_{\bm{g}} \times N_{\bm{\Omega}}}$,
where
\begin{equation}
\label{eq:M}
    \bm{M} = \text{diag}(\sqrt{w_1}, \dots, \sqrt{w_{N_{\bm{\Omega}}}}) \in \mathbb{R}^{N_{\bm{\Omega}} \times N_{\bm{\Omega}}}
\end{equation}
is the diagonal matrix consisting of the square roots of the quadrature weights.
\begin{definition} \label{def:energy}
    Given $\theta \in [0,1]$, we define a discrete energy
    \begin{align*}
        E^n_{\theta} ={} & |D_{\bm{\Omega}}| \norm*{\bm{\rho}^n}^2 + \varepsilon^2 \norm*{\bm{G}^n}_w^2 + (1-\theta) \Delta t \sigma^s_0 \norm*{\bm{G}^n}_w^2.
    \end{align*}
    The scheme is said to be: (i) $\theta$-stable if $E^{n+1}_{\theta} \le E^n_{\theta}$ for $n \ge 0$; (ii) stable if there exists such a $\theta \in [0, 1]$; and (iii) unconditionally stable if the stability holds for any $\Delta t > 0$.
\end{definition}

Building on \cite{liu2010analysis, einkemmer2024asymptoticpreserving},
we adapt temporal stability arguments to the full-rank IMEX scheme as follows.
\begin{theorem}[Energy stability of the IMEX scheme]\label{thm:IMEX-stability}
    Assume that $\Phi = 0$.
    The IMEX scheme~\eqref{eq:IMEX-2d} is $1$-stable under the time step condition
    \begin{equation}\label{eq:IMEX-time-step}
        \Delta t \le{}
        \min_{j = 1,\dots,d} \left(
        \frac{\varepsilon (\Delta x^{(j)} / d)}{C_A  \norm*{\bm{\Omega}}_{\infty} + C_B^{(j)}}
        +
        \frac{\sigma_0^s (\Delta x^{(j)} / d)^2}{2 C_A \norm*{\bm{\Omega}}_{\infty}^2 + 2 C_B^{(j)} \norm*{\bm{\Omega}}_{\infty}}
        \right),
    \end{equation}
    where $C_A = 1$, $C_B^{(j)} = \frac{1}{|D_{\bm{\Omega}}|} \bm{w}^\top \abs*{\bm{\Omega}^{(j)}} \bm{1} = \frac12$ for both one- and two-dimensional cases.
\end{theorem}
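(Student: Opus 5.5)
We follow the energy method of \cite{liu2010analysis, einkemmer2024asymptoticpreserving}. With $\theta=1$ the energy is $E^n_1 = |D_{\bm{\Omega}}|\norm*{\bm{\rho}^n}^2 + \varepsilon^2\norm*{\bm{G}^n}_w^2$, so we must show $E^{n+1}_1\le E^n_1$ when $\Phi=0$.

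\textbf{Step 1: test the two equations.} Take the discrete inner product of the $\bm{\rho}$-equation of \eqref{eq:IMEX-2d} with $2\Delta t\,|D_{\bm{\Omega}}|\,\bm{\rho}^{n+1}$. Take the weighted inner product of the $\bm{G}$-equation with $2\Delta t\,\varepsilon^2\bm{G}^{n+1}$. Use the identity $2(a-b)a = a^2-b^2+(a-b)^2$ in both. The absorption terms and the term $\bm{\sigma}^s\bm{G}^{n+1}$ enter with a favourable sign. Hence
\begin{equation*}
    E^{n+1}_1 - E^n_1 + |D_{\bm{\Omega}}|\norm*{\bm{\rho}^{n+1}-\bm{\rho}^n}^2 + \varepsilon^2\norm*{\bm{G}^{n+1}-\bm{G}^n}_w^2 + 2\Delta t\,\sigma^s_0\norm*{\bm{G}^{n+1}}_w^2 \le \text{(coupling and transport terms)}.
\end{equation*}

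\textbf{Step 2: use the discrete adjoint structure.} The operators $\mathcal{H}$ and $\mathcal{J}$ are built from $\mathcal{D}^{(j),-}$ and $\mathcal{D}^{(j),+}$, which are negative adjoints of each other on the periodic staggered grid: $(\mathcal{D}^{(j),-})^\top = -\mathcal{D}^{(j),+}$ in the $\Delta x$-scaled inner product. A direct computation then gives
\begin{equation*}
    |D_{\bm{\Omega}}|\inner*{\mathcal{H}(\bm{G}),\bm{\rho}} = -\inner*{\mathcal{J}(\bm{\rho}),\bm{G}}_w .
\end{equation*}
Since $\bm{\rho}^n$ is explicit in the micro equation, the coupling terms do not cancel exactly. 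What remains is the mismatch $2\Delta t\inner*{\mathcal{J}(\bm{\rho}^{n+1}-\bm{\rho}^n),\bm{G}^{n+1}}_w$. Apply the adjoint identity to this mismatch and substitute $\bm{\rho}^{n+1}-\bm{\rho}^n = -\Delta t\,\mathcal{H}(\bm{G}^{n+1})$ (for $\sigma^a=0$; the $\sigma^a$ term only helps, or is handled by Young). The mismatch becomes $\mathcal{O}(\Delta t^2)\norm*{\mathcal{H}(\bm{G}^{n+1})}^2$.

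Next bound $\norm*{\mathcal{H}(\bm{G})}$. Use $\norm*{\mathcal{D}^{(j),\pm}}\le 2/\Delta x^{(j)}$, and apply Cauchy--Schwarz in the weights. This produces the constant $C_B^{(j)}=\frac{1}{|D_{\bm{\Omega}}|}\bm{w}^\top|\bm{\Omega}^{(j)}|\bm{1}$. The factor $d$ enters through $(\sum_j a_j)^2\le d\sum_j a_j^2$, which is why $\Delta x^{(j)}/d$ appears.

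\textbf{Step 3: the transport term, which is the main obstacle.} The remaining term is $-2\Delta t\,\varepsilon\inner*{\mathcal{A}(\bm{G}^n)(\mathbf{I}-\tfrac{1}{|D_{\bm{\Omega}}|}\bm{w}\bm{1}^\top),\bm{G}^{n+1}}_w$. The upwind splitting makes $\inner*{\mathcal{A}(\bm{G}),\bm{G}}_w$ equal to minus a positive semidefinite jump form of size $\sum_j\frac{\Delta x^{(j)}}{2}\norm{\mathcal{D}\bm{G}\,|\bm{Q}^{(j)}|^{1/2}}^2$.

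The projection $\mathbf{I}-\frac{1}{|D_{\bm{\Omega}}|}\bm{w}\bm{1}^\top$ removes the angular mean. Its weighted pairing with $\bm{G}^{n+1}$ produces an extra term involving $\bm{w}^\top|\bm{\Omega}^{(j)}|$, which is again $C_B^{(j)}$; this is where $C_A+C_B^{(j)}$ arises. The complication is that $\bm{G}^n$ and $\bm{G}^{n+1}$ are paired, so I would write $\bm{G}^n=\bm{G}^{n+1}-(\bm{G}^{n+1}-\bm{G}^n)$.

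\begin{itemize}
    \item The $\bm{G}^{n+1}$ part gives the dissipative jump form.
    \item The increment part is split by Young's inequality. One piece is absorbed into $\varepsilon^2\norm*{\bm{G}^{n+1}-\bm{G}^n}_w^2$; this requires $\Delta t\lesssim\varepsilon\Delta x/(C_A\norm{\bm{\Omega}}_\infty+C_B)$ and yields the first summand of \eqref{eq:IMEX-time-step}.
    \item The other piece is absorbed into $2\Delta t\sigma^s_0\norm*{\bm{G}^{n+1}}_w^2$. Together with the Step 2 mismatch, this requires $\Delta t\lesssim\sigma^s_0\Delta x^2/(2C_A\norm{\bm{\Omega}}_\infty^2+2C_B\norm{\bm{\Omega}}_\infty)$ and yields the second summand.
\end{itemize}

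Using a convex-combination splitting of the Young weights, the two restrictions combine into the single condition stated as a sum. The delicate point is choosing the Young parameters so that the two constraints add rather than take a minimum. Finally, the values $C_A=1$ and $C_B^{(j)}=\tfrac12$ follow from $|\Omega^{(j)}|\le1$ and from evaluating $\frac{1}{|D_{\bm{\Omega}}|}\sum_m w_m|\Omega^{(j)}_m|$ for the chosen quadrature in 1D and 2D.
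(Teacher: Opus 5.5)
There is a genuine gap in where you locate $C_B^{(j)}$, and as a result in how the coupling remainder is absorbed.

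First, the projection $\mathbf{I}-\frac{1}{|D_{\bm{\Omega}}|}\bm{w}\bm{1}^\top$ contributes nothing. The scheme preserves $\bm{G}^{n}\bm{w}=\bm{0}$, so $\bigl(\mathbf{I}-\frac{1}{|D_{\bm{\Omega}}|}\bm{w}\bm{1}^\top\bigr)\bm{M}^2(\bm{G}^{n+1})^\top=\bm{M}^2(\bm{G}^{n+1})^\top$, and the transport term is simply $\inner{\mathcal{A}(\bm{G}^n),\bm{G}^{n+1}}_w$. In the paper, $C_B^{(j)}$ enters \emph{both} summands only through the coupling remainder $\frac{\Delta t^2}{2}|D_{\bm{\Omega}}|\norm{\mathcal{H}(\bm{G}^{n+1})}^2$. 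This is what is left after $|D_{\bm{\Omega}}|\norm{\bm{\rho}^{n+1}-\bm{\rho}^n}^2$ cancels half of your mismatch; dropping that term costs a factor $2$.

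Second, you bound $\mathcal{H}$ via $\norm{\mathcal{D}^{(j),\pm}}\le 2/\Delta x^{(j)}$ and absorb it only into $\sigma^s_0\norm{\bm{G}^{n+1}}_w^2$. That imposes $\Delta t\lesssim\sigma^s_0(\Delta x/d)^2/(C_B^{(j)}\norm{\bm{\Omega}}_\infty)$ on this term by itself, independently of $\varepsilon$. No choice of Young weights on the transport term can relax it. For $\sigma^s_0\Delta x\ll\varepsilon$ your condition is therefore strictly more restrictive than \eqref{eq:IMEX-time-step}, whose first summand is a hyperbolic bound that itself contains $C_B^{(j)}$.

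Relatedly, absorbing a Young piece into $\varepsilon^2\norm{\bm{G}^{n+1}-\bm{G}^n}_w^2$ costs no time-step restriction: take $\eta=\varepsilon/(2\Delta t)$, which uses that term up exactly. The $\varepsilon\Delta x$ restriction instead comes from absorbing the leftover $\frac{\Delta t^2}{2}C_A d\sum_j\norm{\mathcal{D}^{(j),+}\bm{G}^{n+1}\abs{\bm{Q}^{(j)}}}_w^2$ (Lemma~\ref{lemma:lm-2}) into the upwind jump dissipation. Your bookkeeping never uses that dissipation to absorb anything.

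The missing idea is to write every remainder and every dissipation term in the same jump variables. Keep $\mathcal{H}$ in difference form, $|D_{\bm{\Omega}}|^2\norm{\mathcal{H}(\bm{G})}^2\le d\sum_j\norm{\mathcal{D}^{(j),-}\bm{G}\bm{Q}^{(j)}\bm{w}}^2$. Then apply row-wise $\bm{Q}^{(j)}\bm{w}\bm{w}^\top(\bm{Q}^{(j)})^\top\le C_B^{(j)}|D_{\bm{\Omega}}|\abs{\bm{Q}^{(j)}}\bm{M}^2$ (Lemma~\ref{lemma:lm-3}). The coupling remainder is then controlled by the same form $\inner{\mathcal{D}^{(j),\pm}\bm{G}\abs{\bm{Q}^{(j)}},\mathcal{D}^{(j),\pm}\bm{G}}_w$ as the upwind dissipation. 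Convert the scattering term with $\norm{\bm{G}}_w^2\ge\frac{1}{d}\sum_j\frac{(\Delta x^{(j)})^2}{4}\norm{\mathcal{D}^{(j),-}\bm{G}}_w^2$. After an index shift identifying the $\mathcal{D}^{(j),-}$ and $\mathcal{D}^{(j),+}$ norms, stability reduces, for each direction $j$ and node $k$, to
\[
\frac{\Delta t\, d}{2}\Bigl(C_A\abs{\Omega_k^{(j)}}^2+C_B^{(j)}\abs{\Omega_k^{(j)}}\Bigr)\le\frac{\varepsilon\Delta x^{(j)}}{2}\abs{\Omega_k^{(j)}}+\frac{\sigma^s_0(\Delta x^{(j)})^2}{4d}.
\]
This is linear in $\Delta t$ and gives exactly the summed bound. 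Your convex-combination splitting would reproduce the same sum, but only once both the $\mathcal{A}^*$ remainder and the $\mathcal{H}$ remainder are bounded by both dissipation mechanisms in this way.
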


The following energy analysis of IMEX-S scheme follows the work \cite{peng2020stabilityenhanced}.
While our spatial and angular discretizations differ from that in \cite{peng2020stabilityenhanced}, the stability is primarily governed by the temporal treatment.
The proof techniques remain applicable to our full-rank IMEX-S scheme and serve as the foundation for the subsequent low-rank analysis.

\begin{theorem}[Energy $\theta$-stability of the IMEX-S scheme]\label{thm:IMEX-S-theta-stability}
    Assume that $\Phi = 0$.
    Given $\theta \in [0,1]$, the IMEX-S scheme~\eqref{eq:IMEX-S-2d} is $\theta$-stable under the time step condition
    \begin{equation}\label{eq:IMEX-S-theta-time-step}
        \left( \frac{\varepsilon\,C_A d}{2\min_{j} \Delta x^{(j)}} - \frac{(1-\theta)}{4\norm*{\bm{\Omega}}_{\infty}} \sigma^s_0\right) \Delta t \le \frac{\varepsilon^2}{2\norm*{\bm{\Omega}}_{\infty}},
    \end{equation}
    where $C_A = \norm*{\bm{\Omega}}_{\infty} = 1$.
    The scheme is unconditionally $\theta$-stable if
    \begin{equation*}
        \frac{\varepsilon\,C_A d}{2\min_{j} \Delta x^{(j)}} \le{} \frac{(1-\theta)}{4\norm*{\bm{\Omega}}_{\infty}} \sigma^s_0.
    \end{equation*}
\end{theorem}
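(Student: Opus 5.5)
We test the macro equation with $|D_{\bm{\Omega}}|\bm{\rho}^{n+1}$ and the micro equation with $\varepsilon^2\bm{G}^{n+1}$ in the weighted inner product, and add the two. The target inequality is $E^{n+1}_\theta\le E^n_\theta$.

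\textbf{Step 1 (coupling cancellation).} Since $\bm{\rho}$ is treated implicitly in both equations, the coupling terms combine exactly:
\[
|D_{\bm{\Omega}}|\,\Delta t\inner*{\mathcal{H}(\bm{G}^{n+1}),\bm{\rho}^{n+1}}+\Delta t\inner*{\mathcal{J}(\bm{\rho}^{n+1}),\bm{G}^{n+1}}_w=0 .
\]
This follows from two facts about the staggered grid: the summation-by-parts identity $(\mathcal{D}^{(j),-})^\top=-\mathcal{D}^{(j),+}$ (periodic boundary conditions), and $\bm{1}^\top\bm{Q}^{(j)}\bm{M}^2\bm{G}^\top = (\bm{G}\bm{Q}^{(j)}\bm{w})^\top$. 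This cancellation is exactly what the IMEX-S treatment buys us over Theorem~\ref{thm:IMEX-stability}, where the explicit $\bm{\rho}^n$ leaves a residual.

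\textbf{Step 2 (time differences and dissipation).} Apply the identity $2\inner{a-b,a}=\|a\|^2-\|b\|^2+\|a-b\|^2$ to both time-difference terms. This gives
\[
|D_{\bm{\Omega}}|\bigl(\|\bm{\rho}^{n+1}\|^2-\|\bm{\rho}^n\|^2+\|\bm{\rho}^{n+1}-\bm{\rho}^n\|^2\bigr)+\varepsilon^2\bigl(\|\bm{G}^{n+1}\|_w^2-\|\bm{G}^n\|_w^2+\|\bm{G}^{n+1}-\bm{G}^n\|_w^2\bigr)
\]
on the left-hand side. The right-hand side contains the dissipation $-2\Delta t\,\sigma^s_0\|\bm{G}^{n+1}\|_w^2$ together with absorption terms, which we drop since $\sigma^a\ge0$. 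The projection $(\mathbf{I}-\frac{1}{|D_{\bm{\Omega}}|}\bm{w}\bm{1}^\top)$ is orthogonal in the weighted product, and $\bm{G}^{n+1}$ has zero angular mean. Hence that projector can be removed when paired with $\bm{G}^{n+1}$.

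\textbf{Step 3 (transport term, main obstacle).} The remaining explicit term is $-2\varepsilon\Delta t\inner*{\mathcal{A}(\bm{G}^n),\bm{G}^{n+1}}_w$. Split it as
\[
-2\varepsilon\Delta t\inner*{\mathcal{A}(\bm{G}^n),\bm{G}^n}_w-2\varepsilon\Delta t\inner*{\mathcal{A}(\bm{G}^n),\bm{G}^{n+1}-\bm{G}^n}_w .
\]

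The first piece is handled by the upwind structure. Write $\mathcal{D}^{(j),\mp}$ as a centered part plus or minus a diffusive part, with $\bm{Q}^{(j),\pm}$ carrying the signs. The centered part is skew and drops out, and what remains is a nonnegative jump term
\[
\varepsilon\Delta t\sum_j \Delta x^{(j)}\|\mathcal{D}^{(j)}\bm{G}^n|\bm{Q}^{(j)}|^{1/2}\|_w^2 .
\]
So the first piece contributes $-\varepsilon\Delta t\cdot(\text{jumps})$.

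For the second piece, bound $\|\mathcal{A}(\bm{G}^n)\|_w$ in terms of these jumps and $\|\bm{G}^n\|_w$, with a constant of order $C_A d\|\bm{\Omega}\|_\infty/\min_j\Delta x^{(j)}$. Then apply Young's inequality, splitting the cross term between three sinks:
\begin{itemize}
\item[(a)] $\varepsilon^2\|\bm{G}^{n+1}-\bm{G}^n\|_w^2$;
\item[(b)] the jump dissipation from the first piece;
\item[(c)] the leftover $\theta\Delta t\sigma^s_0\|\bm{G}^{n+1}\|_w^2$.
\end{itemize}

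\textbf{Step 4 (closing the estimate).} The portion $(1-\theta)\Delta t\sigma^s_0\|\bm{G}^{n+1}\|_w^2$ is kept and moved into $E^{n+1}_\theta$. A corresponding term $(1-\theta)\Delta t\sigma^s_0\|\bm{G}^n\|_w^2$ appears on the old side as a budget against the $\bm{G}^n$-dependent remainder. Rebalancing, we need the Young remainder to satisfy
\[
\Bigl(\frac{\varepsilon C_Ad}{2\min_j\Delta x^{(j)}}-\frac{(1-\theta)\sigma^s_0}{4\|\bm{\Omega}\|_\infty}\Bigr)\Delta t\,\|\bm{G}^n\|_w^2\le\frac{\varepsilon^2}{2\|\bm{\Omega}\|_\infty}\|\bm{G}^n\|_w^2 ,
\]
which is exactly condition \eqref{eq:IMEX-S-theta-time-step}. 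When the bracket is nonpositive, the inequality holds for every $\Delta t>0$, which gives the unconditional case.

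The delicate part is choosing the Young weights so that the constants come out exactly as $1/2$ and $1/4$. I would first try the equal splitting used in \cite{peng2020stabilityenhanced}, adapted by replacing their spatial operator norms with those of $\mathcal{D}^{(j),\pm}$ and $\bm{Q}^{(j),\pm}$.
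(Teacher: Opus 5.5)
Your Steps 1--2 and the first half of Step 3 are sound. Expanding the transport term around $\bm{G}^n$ instead of $\bm{G}^{n+1}$ (the paper uses Lemma~\ref{lemma:lm-1} and the adjoint $\mathcal{A}^*$) is a harmless variant. Indeed, $\inner*{\mathcal{A}(\bm{G}),\bm{G}}_w$ equals the jump term for every $\bm{G}$, and $\norm*{\mathcal{A}(\bm{G})}_w^2\le C_A d\sum_j\norm*{\mathcal{D}^{(j),+}\bm{G}\abs*{\bm{Q}^{(j)}}}_w^2$ follows by the argument of Lemma~\ref{lemma:lm-2}.

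The gap is in Steps 3(c)--4: the sinks are attached to the wrong quantities, and the closing inequality is asserted rather than derived. The $\varepsilon^2/2$ in \eqref{eq:IMEX-S-theta-time-step} is the coefficient of $\norm*{\bm{G}^{n+1}-\bm{G}^n}_w^2$ coming from the time difference. It is not a coefficient of $\norm*{\bm{G}^n}_w^2$, so your final display, which compares multiples of $\norm*{\bm{G}^n}_w^2$, is not a step of any energy estimate. Sink (c) cannot absorb quantities in $\bm{G}^n$. In fact the target condition never uses the $\theta$-part of the scattering, which is why the admissible $\Delta t$ shrinks as $\theta\to1$. The route you hint at also fails: suppose you charge a $\norm*{\bm{G}^n}_w$-part of $\norm*{\mathcal{A}(\bm{G}^n)}_w$ (via $\norm*{\mathcal{D}^{(j),+}\bm{G}}_w\lesssim\norm*{\bm{G}}_w/\Delta x^{(j)}$) to the $(1-\theta)\Delta t\sigma^s_0\norm*{\bm{G}^n}_w^2$ budget. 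The increment sink forces $\eta\lesssim\varepsilon/\Delta t$, so the remainder is of order $\Delta t^2/(\Delta x^{(j)})^2$ against $\Delta t\,\sigma^s_0$. That gives a parabolic restriction $\Delta t\lesssim(1-\theta)\sigma^s_0(\Delta x^{(j)})^2$, not \eqref{eq:IMEX-S-theta-time-step}, and no unconditional regime.

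The missing bookkeeping (in the paper's normalization, half of yours) is as follows.

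First, bound $\norm*{\mathcal{A}(\bm{G}^n)}_w$ by jumps only, and put the entire Young remainder $\varepsilon\Delta t\frac{C_Ad}{4\eta}\sum_j\norm*{\mathcal{D}^{(j),+}\bm{G}^n\abs*{\bm{Q}^{(j)}}}_w^2$ on the jump dissipation. Since $\abs*{\bm{Q}^{(j)}}^2\le\norm*{\bm{\Omega}}_\infty\abs*{\bm{Q}^{(j)}}$, this only requires $\eta\ge C_Ad\norm*{\bm{\Omega}}_\infty/(2\min_j\Delta x^{(j)})$ and imposes no time step restriction.

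Second, the other part, $\varepsilon\Delta t\,\eta\norm*{\bm{G}^{n+1}-\bm{G}^n}_w^2$, must be absorbed by sinks on the increment. One is $\frac{\varepsilon^2}{2}\norm*{\bm{G}^{n+1}-\bm{G}^n}_w^2$. The other is your $(1-\theta)$ scattering budget, converted by the parallelogram law after using $\sigma^s\ge\sigma^s_0$:
\[
\norm*{\bm{G}^{n+1}}_w^2=\theta\norm*{\bm{G}^{n+1}}_w^2+\tfrac{1-\theta}{2}\bigl(\norm*{\bm{G}^{n+1}}_w^2-\norm*{\bm{G}^{n}}_w^2\bigr)+\tfrac{1-\theta}{4}\bigl(\norm*{\bm{G}^{n+1}-\bm{G}^{n}}_w^2+\norm*{\bm{G}^{n+1}+\bm{G}^{n}}_w^2\bigr).
\]
The middle term turns $E_1$ into $E_\theta$. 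The $\theta$-term and the $\norm*{\bm{G}^{n+1}+\bm{G}^n}_w^2$ term are simply dropped. The last term supplies the extra sink $\frac{(1-\theta)\Delta t\sigma^s_0}{4}\norm*{\bm{G}^{n+1}-\bm{G}^n}_w^2$.

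Stability then reduces to $\varepsilon\Delta t\,\eta\le\frac{\varepsilon^2}{2}+\frac{(1-\theta)\Delta t\sigma^s_0}{4}$. With the minimal $\eta$ this is exactly \eqref{eq:IMEX-S-theta-time-step}. This is where the $1/2$ and $1/4$ come from, and no further tuning of Young weights is needed.
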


We can optimize the results in Theorem~\ref{thm:IMEX-S-theta-stability} in $\theta$ to maximize the unconditionally stable region and also the allowable time step size when the scheme is conditionally stable.

\begin{corollary}[Energy stability of the IMEX-S scheme] \label{thm:IMEX-S-stability}
    Assume that $\Phi = 0$.
    The IMEX-S scheme~\eqref{eq:IMEX-S-2d} is stable under the time step condition
    \begin{equation}\label{eq:IMEX-S-time-step}
        \left( \frac{\varepsilon\,C_A d}{2\min_{j} \Delta x^{(j)}} - \frac{\sigma^s_0}{4\norm*{\bm{\Omega}}_{\infty}} \right) \Delta t \le \frac{\varepsilon^2}{2\norm*{\bm{\Omega}}_{\infty}},
    \end{equation}
    where $C_A = \norm*{\bm{\Omega}}_{\infty} = 1$.
    The scheme is unconditionally $\theta$-stable if
    \begin{equation*}
        \frac{\varepsilon\,C_A d}{2\min_{j} \Delta x^{(j)}} \le{} \frac{\sigma^s_0}{4\norm*{\bm{\Omega}}_{\infty}}.
    \end{equation*}
\end{corollary}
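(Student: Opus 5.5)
The corollary follows from Theorem~\ref{thm:IMEX-S-theta-stability} by choosing $\theta$ well. No new energy estimate is needed; we only have to see how the time step condition \eqref{eq:IMEX-S-theta-time-step} depends on $\theta$. Definition~\ref{def:energy} calls the scheme stable if it is $\theta$-stable for \emph{some} $\theta\in[0,1]$. So it suffices to exhibit one admissible $\theta$ whenever \eqref{eq:IMEX-S-time-step} holds.

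\textbf{Monotonicity in $\theta$.} Write \eqref{eq:IMEX-S-theta-time-step} as
\[
\Big(a - (1-\theta)b\Big)\Delta t \le c,\qquad a=\frac{\varepsilon C_A d}{2\min_j \Delta x^{(j)}},\quad b=\frac{\sigma^s_0}{4\norm*{\bm{\Omega}}_{\infty}},\quad c=\frac{\varepsilon^2}{2\norm*{\bm{\Omega}}_{\infty}}.
\]
Since $b\ge 0$ and $\Delta t>0$, the left-hand side is nondecreasing in $\theta$. The condition is therefore least restrictive at $\theta=0$, where it becomes $(a-b)\Delta t\le c$. This is exactly \eqref{eq:IMEX-S-time-step}.

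\textbf{Conclusion.} Taking $\theta=0$ in Theorem~\ref{thm:IMEX-S-theta-stability}, the scheme is $0$-stable, hence stable, under \eqref{eq:IMEX-S-time-step}. The unconditional statement is the $\theta=0$ case of the unconditional criterion in Theorem~\ref{thm:IMEX-S-theta-stability}. If $a\le b$, the left-hand side is nonpositive for every $\Delta t>0$, so the inequality holds for all $\Delta t>0$.

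The phrase ``unconditionally $\theta$-stable'' in the corollary should be read as ``unconditionally $\theta$-stable with $\theta=0$.'' The corresponding energy is $E^n_0$, which carries the extra term $\Delta t\,\sigma^s_0\norm*{\bm G^n}_w^2$. This term depends on $\Delta t$ but is fixed once $\Delta t$ is fixed, so the energy monotonicity $E^{n+1}_0\le E^n_0$ is still a meaningful notion of stability. This interpretive point is the only real subtlety.
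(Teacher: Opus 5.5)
Your proposal is correct and follows the paper's argument: the paper justifies the corollary only by remarking that Theorem~\ref{thm:IMEX-S-theta-stability} can be optimized over $\theta$. Because the left-hand side of \eqref{eq:IMEX-S-theta-time-step} is nondecreasing in $\theta$, the optimum is $\theta=0$, which yields exactly \eqref{eq:IMEX-S-time-step} and the stated unconditional criterion. Your reading of ``unconditionally $\theta$-stable'' as unconditional $0$-stability, measured in the $\Delta t$-dependent energy $E^n_0$, is the intended one.
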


We first present some auxiliary lemmas required in the analysis,
followed by the proofs of the corresponding theorems.

\begin{lemma}\label{lemma:lm-1}
    \begin{equation}\label{eq:lm-1-target}
        \begin{aligned}
            \inner*{\mathcal{A}(\bm{G}^n), \bm{G}^{n+1}}_w ={} & \sum_{j=1}^d \frac{\Delta x^{(j)}}2 \inner*{\mathcal{D}^{(j),+} \bm{G}^{n+1} \abs*{\bm{Q}^{(j)}}, \mathcal{D}^{(j),+} \bm{G}^{n+1}}_w
            \\ & - \inner*{\mathcal{A}^*(\bm{G}^{n+1}), \bm{G}^{n+1} - \bm{G}^n}_w.
        \end{aligned}
    \end{equation}
    where $\mathcal{A}^*$ denotes the adjoint operator of $\mathcal{A}$ with respect to the weighted inner product $\inner*{\,\cdot\,,\,\cdot\,}_w$, defined as
    \begin{align*}
        \mathcal{A}^*(\bm{G}^n) ={} &
        - \sum_{j=1}^d \left(\mathcal{D}^{(j),-} \bm{G}^n \bm{Q}^{(j),-} + \mathcal{D}^{(j),+} \bm{G}^n \bm{Q}^{(j),+}\right).
    \end{align*}
\end{lemma}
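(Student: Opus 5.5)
Write $\bm{G}^n = \bm{G}^{n+1} - (\bm{G}^{n+1}-\bm{G}^n)$. By linearity of $\mathcal{A}$,
\[
\inner*{\mathcal{A}(\bm{G}^n),\bm{G}^{n+1}}_w = \inner*{\mathcal{A}(\bm{G}^{n+1}),\bm{G}^{n+1}}_w - \inner*{\mathcal{A}(\bm{G}^{n+1}-\bm{G}^n),\bm{G}^{n+1}}_w .
\]
The second term becomes $-\inner*{\bm{G}^{n+1}-\bm{G}^n,\mathcal{A}^*(\bm{G}^{n+1})}_w$ once we know that the displayed $\mathcal{A}^*$ really is the $w$-adjoint of $\mathcal{A}$. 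This reproduces the last term of \eqref{eq:lm-1-target}. The lemma therefore splits into two facts: an adjoint identity, and the evaluation of the quadratic form $\inner*{\mathcal{A}(\bm{G}),\bm{G}}_w$ as the upwind numerical dissipation.

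\textbf{Adjoint identity.} I will use two structural facts.

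First, on the periodic staggered grid the difference operators satisfy summation by parts, $(\mathcal{D}^{(j),\pm})^\top = -\mathcal{D}^{(j),\mp}$ with respect to the Euclidean product. Indeed, $\mathcal{D}^{(j),-}$ is the backward difference $(u_i-u_{i-1})/\Delta x^{(j)}$ and $\mathcal{D}^{(j),+}$ is the forward difference, so they are negative transposes of each other; the factor $\prod_j \Delta x^{(j)}$ plays no role.

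Second, the angular matrices $\bm{Q}^{(j),\pm}$ and $\bm{M}^2$ are all diagonal, so they commute. Hence
\[
\tr\bigl(\mathcal{D}\bm{G}\bm{Q}\bm{M}^2\bm{F}^\top\bigr) = \tr\bigl(\bm{G}\bm{M}^2(\mathcal{D}^\top\bm{F}\bm{Q})^\top\bigr).
\]
Applying this term by term to \eqref{eq:operatorA} gives
\[
\mathcal{A}^*(\bm{F}) = -\sum_j\bigl(\mathcal{D}^{(j),+}\bm{F}\bm{Q}^{(j),+} + \mathcal{D}^{(j),-}\bm{F}\bm{Q}^{(j),-}\bigr),
\]
which matches the statement.

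\textbf{Quadratic form.} This is the main computational step. Writing $\bm{Q}^{(j),\pm} = \tfrac12(\bm{Q}^{(j)} \pm |\bm{Q}^{(j)}|)$, we get, for each $j$,
\[
\mathcal{D}^{-}\bm{G}\bm{Q}^{+} + \mathcal{D}^{+}\bm{G}\bm{Q}^{-} = \tfrac12(\mathcal{D}^- + \mathcal{D}^+)\bm{G}\bm{Q} - \tfrac12(\mathcal{D}^+ - \mathcal{D}^-)\bm{G}|\bm{Q}|.
\]
\begin{itemize}
\item The centered operator $\tfrac12(\mathcal{D}^-+\mathcal{D}^+)$ is skew-symmetric by summation by parts. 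Multiplying on the right by the diagonal matrix $\bm{Q}\bm{M}^2$ keeps the weighted form skew, so this part contributes zero to $\inner*{\mathcal{A}(\bm{G}),\bm{G}}_w$.
\item For the remaining part, use $\mathcal{D}^+ - \mathcal{D}^- = \Delta x^{(j)}\,\mathcal{D}^-\mathcal{D}^+$ together with $(\mathcal{D}^-)^\top = -\mathcal{D}^+$. Then
\[
-\tfrac12\inner*{(\mathcal{D}^+-\mathcal{D}^-)\bm{G}|\bm{Q}|,\bm{G}}_w = \tfrac{\Delta x^{(j)}}2\inner*{\mathcal{D}^+\bm{G}|\bm{Q}|,\mathcal{D}^+\bm{G}}_w,
\]
again because $|\bm{Q}|$ commutes with $\bm{M}^2$.
\end{itemize}
Summing over $j$ gives exactly the first term of \eqref{eq:lm-1-target} with $\bm{G}=\bm{G}^{n+1}$.

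\textbf{Main obstacle.} The delicate point is confirming the staggered-grid conventions. On the grid of Figure~\ref{fig:mesh-2d}, $\mathcal{D}^{(j),\pm}$ map between the two interleaved sub-grids, where $\bm{g}$ lives at $(x_{i+1/2},y_j)$ and $(x_i,y_{j+1/2})$. I need to check two things there: that the transposition rule $(\mathcal{D}^{\pm})^\top=-\mathcal{D}^{\mp}$ still holds under the index maps $B_{\bm{g}}$, and that $\mathcal{D}^+-\mathcal{D}^-=\Delta x\,\mathcal{D}^-\mathcal{D}^+$ holds with the correct shift, so that no factor of $2$ is lost. Periodicity removes all boundary terms.

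I would first verify both identities on a single 1D periodic sub-grid using circulant shift matrices $S$, with $\mathcal{D}^+=(S-I)/\Delta x$ and $\mathcal{D}^-=(I-S^\top)/\Delta x$. I would then argue that the staggered 2D operators are block versions of these acting on each sub-grid separately.
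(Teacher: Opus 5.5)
Your proposal is correct and follows the same route as the paper. You split $\bm{G}^n=\bm{G}^{n+1}-(\bm{G}^{n+1}-\bm{G}^n)$, write $\mathcal{A}$ as a skew centered part minus $\frac{\Delta x^{(j)}}{2}\mathcal{D}^{(j),-}\mathcal{D}^{(j),+}\bm{G}\abs*{\bm{Q}^{(j)}}$, and use periodic summation by parts together with the fact that the diagonal angular matrices commute with $\bm{M}^2$, both for the quadratic form and for identifying $\mathcal{A}^*$. Your bookkeeping is cleaner than the printed proof: your centered operator $\frac12(\mathcal{D}^{(j),-}+\mathcal{D}^{(j),+})$ and your identity $\inner*{\mathcal{A}(\bm{G}^{n+1}-\bm{G}^n),\bm{G}^{n+1}}_w=\inner*{\bm{G}^{n+1}-\bm{G}^n,\mathcal{A}^*(\bm{G}^{n+1})}_w$ are the correct versions of expressions that appear with slips in the paper (a $\frac{\mathcal{D}^{(j),+}-\mathcal{D}^{(j),-}}{2\Delta x^{(j)}}$ factor in the reformulation, and a spurious leading minus in the summation-by-parts chain).
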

\begin{proof}
    We begin by reformulating the discrete advection operator $\mathcal{A}$ as
    \begin{equation}
        \begin{aligned}
            \mathcal{A}(\bm{G}) ={} & \sum_{j=1}^d \left(
            \mathcal{D}^{(j),-} \bm{G} \bm{Q}^{(j),+} + \mathcal{D}^{(j),+} \bm{G} \bm{Q}^{(j),-}
            \right)
            \\
            ={}                     &
            \sum_{j=1}^d \left(
            \frac{\mathcal{D}^{(j),+} - \mathcal{D}^{(j),-}}{2 \Delta x^{(j)}}
            \bm{G} \bm{Q}^{(j)} - \frac{\Delta x^{(j)}}2 \mathcal{D}^{(j),-} \mathcal{D}^{(j),+} \bm{G} \abs*{\bm{Q}^{(j)}}
            \right).
        \end{aligned}
    \end{equation}
    Thus, we derive
    \begin{equation}\label{eq:lm-AG-G}
        \begin{aligned}
            \inner*{\mathcal{A}(\bm{G}^{n+1}), \bm{G}^{n+1}}_w
            ={} & \sum_{j=1}^d
            \inner*{\frac{\mathcal{D}^{(j),+} - \mathcal{D}^{(j),-}}{2 \Delta x^{(j)}} \bm{G}^{n+1} \bm{Q}^{(j)}, \bm{G}^{n+1}}_w
            \\ & - \sum_{j=1}^d \frac{\Delta x^{(j)}}2 \inner*{\mathcal{D}^{(j),-} \mathcal{D}^{(j),+} \bm{G}^{n+1} \abs*{\bm{Q}^{(j)}}, \bm{G}^{n+1}}_w                                                                                                                                     \\
            ={} & \sum_{j=1}^d \frac{\Delta x^{(j)}}2 \inner*{\mathcal{D}^{(j),+} \bm{G}^{n+1} \abs*{\bm{Q}^{(j)}}, \mathcal{D}^{(j),+} \bm{G}^{n+1}}_w.
        \end{aligned}
    \end{equation}
    The first term vanishes owing to a shift of indices, whereas the second term can be reformulated using a discrete summation-by-parts argument.
    To prove \eqref{eq:lm-1-target}, we proceed as follows:
    \begin{align*}
        \inner*{\mathcal{A}(\bm{G}^n), \bm{G}^{n+1}}_w ={} & \inner*{\mathcal{A}(\bm{G}^{n+1}), \bm{G}^{n+1}}_w
        - \inner*{\mathcal{A}(\bm{G}^{n+1}-\bm{G}^n), \bm{G}^{n+1}}_w.
    \end{align*}
    The first term corresponds to \eqref{eq:lm-AG-G}. Applying summation by parts to the second term yields
        {\small\begin{align*}
                & \inner*{\mathcal{A}(\bm{G}^{n+1}-\bm{G}^n), \bm{G}^{n+1}}
                \\
                ={} & - \sum_{j=1}^d \inner*{
                    \mathcal{D}^{(j),-}\left(\bm{G}^{n+1} - \bm{G}^n\right) \bm{Q}^{(j),+} + \mathcal{D}^{(j),+}\left(\bm{G}^{n+1} - \bm{G}^n\right) \bm{Q}^{(j),-},
                    \bm{G}^{n+1}}_w
                \\
                ={}                                                           & \sum_{j=1}^d \inner*{
                    \bm{G}^{n+1} - \bm{G}^n,
                    \mathcal{D}^{(j),+} \bm{G}^{n+1} \bm{Q}^{(j),+} + \mathcal{D}^{(j),-} \bm{G}^{n+1} \bm{Q}^{(j),-}}_w
                \\
                ={}                                                           & - \inner*{\mathcal{A}^*(\bm{G}^{n+1}), \bm{G}^{n+1} - \bm{G}^n}_w.
            \end{align*}}
    This concludes the proof.
\end{proof}

\begin{lemma}\label{lemma:lm-2}
    The adjoint advection operator $\mathcal{A}^*$ fulfills the following inequality:
    \begin{align*}
        \norm*{\mathcal{A}^*(\bm{G}^{n+1})}_w^2 \le{} C_A d \sum_{j=1}^d \norm*{\mathcal{D}^{(j),+} \bm{G}^{n+1} \abs*{\bm{Q}^{(j)}}}_w^2,
    \end{align*}
    where the constant $C_A$ satisfies $C_A = 1$ for the $S_N$ method.
\end{lemma}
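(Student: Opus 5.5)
The plan is to split $\mathcal{A}^*$ into its one-dimensional pieces, bound each piece separately, and recombine them with a Cauchy--Schwarz step. The factor $d$ in the bound comes only from that recombination, and the constant $C_A=1$ comes from the per-direction estimate. Write
\[
\mathcal{A}^*(\bm{G}) = -\sum_{j=1}^d \bm{T}_j, \qquad \bm{T}_j := \mathcal{D}^{(j),-} \bm{G} \bm{Q}^{(j),-} + \mathcal{D}^{(j),+} \bm{G} \bm{Q}^{(j),+}, \qquad \bm{G}=\bm{G}^{n+1}.
\]
Since $\norm*{\cdot}_w$ is a Euclidean norm, the triangle inequality followed by Cauchy--Schwarz on the sum of $d$ terms gives
\[
\norm*{\mathcal{A}^*(\bm{G})}_w^2 \le \Big(\sum_{j=1}^d \norm*{\bm{T}_j}_w\Big)^2 \le d \sum_{j=1}^d \norm*{\bm{T}_j}_w^2 .
\]
It therefore suffices to prove $\norm*{\bm{T}_j}_w^2 \le C_A \norm*{\mathcal{D}^{(j),+}\bm{G}\abs*{\bm{Q}^{(j)}}}_w^2$ with $C_A=1$.

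For the per-direction estimate I would work column by column in the angular index. The weighted norm decouples as
\[
\norm*{\bm{F}}_w^2 = \Big(\prod_{i=1}^d \Delta x^{(i)}\Big)\sum_m w_m \abs*{\bm{F}_{:,m}}^2 .
\]
The matrices $\bm{Q}^{(j),\pm}$ are diagonal with entries $\frac12(\Omega_m^{(j)} \pm |\Omega_m^{(j)}|)$, so for each $m$ at most one of them is nonzero. If $\Omega_m^{(j)}\ge 0$, column $m$ of $\bm{T}_j$ is $\Omega_m^{(j)}\,\mathcal{D}^{(j),+}\bm{G}_{:,m}$. If $\Omega_m^{(j)}<0$, it is $\Omega_m^{(j)}\,\mathcal{D}^{(j),-}\bm{G}_{:,m}$. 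In either case
\[
\abs*{(\bm{T}_j)_{:,m}}^2 = |\Omega_m^{(j)}|^2 \,\abs*{\mathcal{D}^{(j),\pm}\bm{G}_{:,m}}^2 .
\]

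The remaining ingredient is $\abs*{\mathcal{D}^{(j),-}\bm{u}} = \abs*{\mathcal{D}^{(j),+}\bm{u}}$ for each column vector $\bm{u}$. Under periodic boundary conditions, $\mathcal{D}^{(j),-}\bm{u}$ is an index shift of $\mathcal{D}^{(j),+}\bm{u}$ along the $j$-th direction: both are the same collection of one-sided differences $(u_{k+1}-u_k)/\Delta x^{(j)}$, just assigned to neighbouring grid points. A shift is a permutation and preserves the Euclidean norm. Granting this, summing over $m$ with weights $w_m$ gives
\[
\norm*{\bm{T}_j}_w^2 = \norm*{\mathcal{D}^{(j),+}\bm{G}\abs*{\bm{Q}^{(j)}}}_w^2 ,
\]
which is in fact an equality and so yields $C_A=1$. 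Combining this with the Cauchy--Schwarz step completes the argument.

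The step I expect to need the most care is this shift identity on the staggered grid. There $\mathcal{D}^{(j),\pm}$ map the $\bm{g}$-points at $(x_{i+\frac12},y_j)$ and $(x_i,y_{j+\frac12})$ to the $\rho$-points and back, so I must check, directly from the index maps $B_{\bm{g}}$ and $B_{\rho}$, that each of $\mathcal{D}^{(j),+}$ and $\mathcal{D}^{(j),-}$ produces the same multiset of differences, merely relabelled. This requires $N_\rho=N_{\bm{g}}$ and periodicity, which is exactly the setting adopted in the paper. The same shift structure is already implicitly used for the summation by parts in Lemma~\ref{lemma:lm-1}.
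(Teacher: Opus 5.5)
Your proposal is correct and is essentially the paper's argument. It has three steps: the Cauchy--Schwarz bound $\norm*{\sum_{j}\bm{T}_j}_w^2\le d\sum_{j}\norm*{\bm{T}_j}_w^2$; the disjoint supports of $\bm{Q}^{(j),+}$ and $\bm{Q}^{(j),-}$, which the paper writes as $\bm{Q}^{(j),+}\bm{M}^2\bm{Q}^{(j),-}=\bm{0}$ to kill cross terms and which you use column by column; and a periodic index shift replacing $\mathcal{D}^{(j),-}$ by $\mathcal{D}^{(j),+}$, which gives equality in each direction. The only difference is cosmetic (per-column bookkeeping instead of matrix identities), and the paper also takes the shift step you flag for granted, relying only on periodicity of the lattice on which $\mathcal{D}^{(j),\pm}$ act.
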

\begin{proof}
    From the definition of the adjoint operator $\mathcal{A}^*$, it follows that
    \begin{align*}
        \norm*{\mathcal{A}^*(\bm{G}^{n+1})}_w^2 \le{} & d \sum_{j=1}^d \norm*{
            \mathcal{D}^{(j),+} \bm{G}^{n+1} \bm{Q}^{(j),+} + \mathcal{D}^{(j),-} \bm{G}^{n+1} \bm{Q}^{(j),-}
        }_w^2.
    \end{align*}
    Using the property $\bm{Q}^{(j),+} \bm{M}^2 \bm{Q}^{(j),-} = \bm{0}$ for $j=1,\dots,d$, the cross terms vanish, yielding
    \begin{equation}\label{eq:lm-2-1}
        \norm*{\mathcal{A}^*(\bm{G}^{n+1})}_w^2 \le{}  d \sum_{j=1}^d \left(\norm*{\mathcal{D}^{(j),+} \bm{G}^{n+1} \bm{Q}^{(j),+}}_w^2
        + \norm*{\mathcal{D}^{(j),-} \bm{G}^{n+1} \bm{Q}^{(j),-}}_w^2\right).
    \end{equation}
    Applying a shift of indices to the second term in \eqref{eq:lm-2-1}, we obtain
    \begin{align*}
        \norm*{\mathcal{A}^*(\bm{G}^{n+1})}_w^2 \le{} & d \sum_{j=1}^d \left(\norm*{\mathcal{D}^{(j),+} \bm{G}^{n+1} \bm{Q}^{(j),+}}_w^2
        + \norm*{\mathcal{D}^{(j),+} \bm{G}^{n+1} \bm{Q}^{(j),-}}_w^2\right)
        \\
        ={}                                           & d \sum_{j=1}^d \norm*{\mathcal{D}^{(j),+} \bm{G}^{n+1} \bm{Q}^{(j),+} - \mathcal{D}^{(j),+} \bm{G}^{n+1} \bm{Q}^{(j),-}}_w^2
        \\
        ={}& d \sum_{j=1}^d \norm*{\mathcal{D}^{(j),+} \bm{G}^{n+1} \abs*{\bm{Q}^{(j)}}}_w^2.
    \end{align*}
\end{proof}

\begin{lemma}\label{lemma:lm-3}
    For each spatial direction $j=1,\dots,d$, the matrix inequality below holds:
    \begin{align*}
        \bm{Q}^{(j)} \bm{w} \bm{w}^\top (\bm{Q}^{(j)})^\top \le{}
        C_B^{(j)} |D_{\bm{\Omega}}| \abs*{\bm{Q}^{(j)}} \bm{M}^2, \quad j=1,\dots,d,
    \end{align*}
    where the constant $C_B^{(j)}$ is given by $C_B^{(j)} := \frac{1}{|D_{\bm{\Omega}}|} \bm{w}^\top \abs*{\bm{\Omega}^{(j)}} \bm{1}$.
\end{lemma}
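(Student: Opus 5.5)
The inequality is a Loewner-order statement between symmetric $N_{\bm{\Omega}}\times N_{\bm{\Omega}}$ matrices. I will prove it by testing against an arbitrary vector $\bm{v}\in\mathbb{R}^{N_{\bm{\Omega}}}$ and applying a weighted Cauchy--Schwarz inequality.

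Fix $j$ and write $\Omega_m:=\Omega^{(j)}_m$ for the $j$-th coordinate of the $m$-th node. Since $\bm{Q}^{(j)}$ is diagonal, the left-hand side equals $\bm{u}\bm{u}^\top$ with $\bm{u}=\bm{Q}^{(j)}\bm{w}$, so $u_m=\Omega_m w_m$. Hence for any $\bm{v}$,
\[
\bm{v}^\top \bm{Q}^{(j)}\bm{w}\bm{w}^\top(\bm{Q}^{(j)})^\top\bm{v}=\Big(\sum_{m} w_m\Omega_m v_m\Big)^2 .
\]
On the right-hand side, $\abs*{\bm{Q}^{(j)}}\bm{M}^2=\mathrm{diag}(|\Omega_m| w_m)$, and $C_B^{(j)}|D_{\bm{\Omega}}|=\bm{w}^\top\abs*{\bm{\Omega}^{(j)}}\bm{1}=\sum_m w_m|\Omega_m|$. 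The second identity reads $\abs*{\bm{\Omega}^{(j)}}$ as $\abs*{\bm{Q}^{(j)}}$, and uses that the quadrature weights are nonnegative. The quadratic form of the right-hand side is therefore
\[
\Big(\sum_m w_m|\Omega_m|\Big)\Big(\sum_m w_m|\Omega_m| v_m^2\Big).
\]

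Next I split each summand as $w_m\Omega_m v_m=\big(\sqrt{w_m|\Omega_m|}\big)\big(\operatorname{sgn}(\Omega_m)\sqrt{w_m|\Omega_m|}\,v_m\big)$. Cauchy--Schwarz then gives
\[
\Big(\sum_m w_m\Omega_m v_m\Big)^2\le \Big(\sum_m w_m|\Omega_m|\Big)\Big(\sum_m w_m|\Omega_m|v_m^2\Big),
\]
which is exactly the claimed matrix inequality. Nodes with $\Omega_m=0$ contribute nothing to either side, so they need no separate treatment.

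Finally, I will check the value $C_B^{(j)}=\tfrac12$ quoted in Theorem~\ref{thm:IMEX-stability}. The Chebyshev--Legendre product rule integrates $|\Omega^{(j)}|$ suitably, giving $\tfrac{1}{2\pi}\sum_m w_m|\Omega_m|=\tfrac12$. This matches the continuous average of $|\Omega_x|$ over the projected sphere with normalization $2\pi$. The argument itself contains no real obstacle; the only care needed is the notation $\abs*{\bm{\Omega}^{(j)}}$ and the nonnegativity of the weights, which is what makes $\bm{M}$ real.
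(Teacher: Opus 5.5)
Your proof is correct and is the same argument as the paper's: test the quadratic form against an arbitrary vector, then apply Cauchy--Schwarz after splitting $w_m\Omega_m v_m$ as $\sqrt{w_m|\Omega_m|}$ times $\operatorname{sgn}(\Omega_m)\sqrt{w_m|\Omega_m|}\,v_m$. Your closing check that $C_B^{(j)}=\tfrac12$ is not needed for the lemma, and for a finite quadrature it holds only approximately, since $|\Omega^{(j)}|$ is not a polynomial and is not integrated exactly.
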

\begin{proof}
    For an arbitrary vector $\bm{h}$, applying the Cauchy--Schwarz inequality, we obtain
    \begin{align*}
        & \bm{h}^\top \bm{Q}^{(j)} \bm{w} \bm{w}^\top (\bm{Q}^{(j)})^\top \bm{h}
        ={}  \left(\sum_k w_k \Omega^{(j)}_k h_k\right)^2
        \\
       ={}  &  \left(\sum_k w_k^{\frac12} \operatorname{sgn}\left(\Omega^{(j)}_k\right)
        w_k^{\frac12}  \abs*{\Omega^{(j)}_k}^{\frac12}  \abs*{\Omega^{(j)}_k}^{\frac12} h_k\right)^2
        \\
        \le{} & \left(\sum_k w_k \abs*{\Omega^{(j)}_k} \right) \left(\sum_{\ell} w_{\ell} \abs*{\Omega^{(j)}_{\ell}} h_{\ell}^2 \right)
        ={}  C_B^{(j)} |D_{\bm{\Omega}}| \bm{h}^\top \abs*{\bm{Q}^{(j)}} \bm{M}^2 \bm{h}.
    \end{align*}
\end{proof}

\begin{proof}[\textbf{Proof of Theorem~\ref{thm:IMEX-stability}}]
    Multiplying the macroscopic equation from the left by $\Delta t (\prod_{j=1}^d \Delta x^{(j)}) (\bm{\rho}^{n+1})^\top$, we obtain
    \begin{equation}\label{eq:rte2-full-1}
        \frac12 \left(\norm*{\bm{\rho}^{n+1}}^2 -  \norm*{\bm{\rho}^n}^2 + \norm*{\bm{\rho}^{n+1} - \bm{\rho}^n}^2\right) + \Delta t \inner*{\bm{\rho}^{n+1}, \mathcal{H}(\bm{G}^{n+1})} = - \Delta t \norm*{\sqrt{\bm{\sigma}^a} \bm{\rho}^{n+1}}^2.
    \end{equation}
    Multiplying the microscopic equation from the right by $\Delta t (\prod_{j=1}^d \Delta x^{(j)}) \bm{M}^2 (\bm{G}^{n+1})^\top$ and then taking the trace,
    using $\bm{G}^{n+1} \bm{M}^2 \bm{1} = \bm{G}^{n+1} \bm{w} = \bm{0}$,
    we obtain
    \begin{align}
         & \frac12 \left(\norm*{\bm{G}^{n+1}}_{w}^2 - \norm*{\bm{G}^{n}}_w^2 +  \norm*{\bm{G}^{n+1} - \bm{G}^{n}}_w^2\right) + \frac{\Delta t}{\varepsilon}
        \inner*{\mathcal{A}(\bm{G}^n), \bm{G}^{n+1}}_w   \notag                                                                                             \\
         & \qquad = - \frac{\Delta t }{\varepsilon^2} \inner*{\mathcal{J}(\bm{\rho}^{n}), \bm{G}^{n+1}}_w
        - \frac{\Delta t }{\varepsilon^2} \norm*{\sqrt{\bm{\sigma}^s}\bm{G}^{n+1}}_w^2 - \Delta t \norm*{\sqrt{\bm{\sigma}^a} \bm{G}^{n+1}}_w^2.  \label{eq:rte2-full-2}
    \end{align}
    Substituting the summation-by-parts identity, $|D_{\bm{\Omega}}| \inner*{\bm{\rho}, \mathcal{H}(\bm{G})} = - \inner*{\mathcal{J}(\bm{\rho}),\bm{G}}_w$,
    into \eqref{eq:rte2-full-2} yields
    \begin{align}\label{eq:rte2-full-2a}
         & \frac12 \left(\norm*{\bm{G}^{n+1}}_{w}^2 - \norm*{\bm{G}^{n}}_w^2 +  \norm*{\bm{G}^{n+1} - \bm{G}^{n}}_w^2\right) + \frac{\Delta t}{\varepsilon}
        \inner*{\mathcal{A}(\bm{G}^n), \bm{G}^{n+1}}_w   \notag                                                                                             \\
         & \qquad = \frac{|D_{\bm{\Omega}}|\Delta t}{\varepsilon^2} \inner*{\bm{\rho}^n, \mathcal{H}(\bm{G}^{n+1})}
        - \frac{\Delta t }{\varepsilon^2} \norm*{\sqrt{\bm{\sigma}^s}\bm{G}^{n+1}}_w^2 - \Delta t \norm*{\sqrt{\bm{\sigma}^a} \bm{G}^{n+1}}_w^2.
    \end{align}
    Adding $|D_{\bm{\Omega}}| \times$ \eqref{eq:rte2-full-1} and $\varepsilon^2 \times$ \eqref{eq:rte2-full-2a} yields
    \begin{align*}
         & \frac{1}{2} \left(E^{n+1}_{1} - E^n_{1}\right)
        +  \frac12 \left(|D_{\bm{\Omega}}| \norm*{\bm{\rho}^{n+1} - \bm{\rho}^{n}}^2
        + \varepsilon^2 \norm*{\bm{G}^{n+1} - \bm{G}^{n}}_w^2\right)
        \notag                                                                                            \\
         & + \varepsilon \Delta t
        \inner*{\mathcal{A}(\bm{G}^n), \bm{G}^{n+1}}_w
        + |D_{\bm{\Omega}}| \Delta t \norm*{\sqrt{\bm{\sigma}^a} \bm{\rho}^{n+1}}^2
        + \Delta t \norm*{\sqrt{\bm{\sigma}^s}\bm{G}^{n+1}}_w^2
        \notag                                                                                            \\
         & + \varepsilon^2 \Delta t \norm*{\sqrt{\bm{\sigma}^a} \bm{G}^{n+1}}_w^2
        + |D_{\bm{\Omega}}| \Delta t \inner*{\bm{\rho}^{n+1} - \bm{\rho}^n, \mathcal{H}(\bm{G}^{n+1})}
        = 0.
    \end{align*}
    According to Lemma~\ref{lemma:lm-1}, we have
    \begin{align*}
        \inner*{\mathcal{A}(\bm{G}^n), \bm{G}^{n+1}}_w
        ={} &
        \sum_{j=1}^d \frac{\Delta x^{(j)}}2 \inner*{\mathcal{D}^{(j),+} \bm{G}^{n+1} \abs*{\bm{Q}^{(j)}}, \mathcal{D}^{(j),+} \bm{G}^{n+1}}_w
        \\
        & - \inner*{\mathcal{A}^*(\bm{G}^{n+1}), \bm{G}^{n+1} - \bm{G}^n}_w.
    \end{align*}
    By the Young's inequality and Lemma~\ref{lemma:lm-2},
    \begin{align*}
        & \abs*{\inner*{\mathcal{A}^*(\bm{G}^{n+1}), \bm{G}^{n+1} - \bm{G}^n}_w}
        \\
        \le{} &
        \eta \norm*{\bm{G}^{n+1} - \bm{G}^n}_w^2 + \frac{C_A d}{4\eta} \sum_{j=1}^d \norm*{\mathcal{D}^{(j),+} \bm{G}^{n+1} \abs*{\bm{Q}^{(j)}}}_w^2
        \\
        ={}   &
        \frac{\varepsilon}{2\Delta t} \norm*{\bm{G}^{n+1} - \bm{G}^n}_w^2
        + \frac{\Delta t}{2\varepsilon} C_A d \sum_{j=1}^d \norm*{\mathcal{D}^{(j),+} \bm{G}^{n+1} \abs*{\bm{Q}^{(j)}}}_w^2,
    \end{align*}
    where $\eta = \frac{\varepsilon}{2\Delta t}$.
    By the macro equation, it follows that
    \begin{align*}
            & \frac12 \norm*{\bm{\rho}^{n+1} - \bm{\rho}^{n}}^2
        + \Delta t \inner*{\bm{\rho}^{n+1} - \bm{\rho}^n, \mathcal{H}(\bm{G}^{n+1})}
        \\
        ={} & \frac{\Delta t^2}2 \left(
        \norm*{\mathcal{H}(\bm{G}^{n+1})}^2
        + \norm*{\bm{\sigma}^a \bm{\rho}^{n+1}}^2
        + 2 \inner*{\bm{\sigma}^a \bm{\rho}^{n+1}, \mathcal{H}(\bm{G}^{n+1})}
        \right)
        \\
            & - \Delta t^2 \inner*{\bm{\sigma}^a \bm{\rho}^{n+1} + \mathcal{H}(\bm{G}^{n+1}), \mathcal{H}(\bm{G}^{n+1})}
        \\
        ={} & \frac{\Delta t^2}2 \norm*{\bm{\sigma}^a \bm{\rho}^{n+1}}^2
        - \frac{\Delta t^2}2 \norm*{\mathcal{H}(\bm{G}^{n+1})}^2.
    \end{align*}
    Hence, we obtain
    \begin{align*}
        \frac{1}{2} \left(E^{n+1}_{1} - E^n_{1}\right)
        \le{} & -\frac{\varepsilon^2}2 \norm*{\bm{G}^{n+1} - \bm{G}^{n}}_w^2
        \\
        & - \varepsilon \Delta t \sum_{j=1}^d \frac{\Delta x^{(j)}}2 \inner*{\mathcal{D}^{(j),+} \bm{G}^{n+1} \abs*{\bm{Q}^{(j)}}, \mathcal{D}^{(j),+} \bm{G}^{n+1}}_w                     \\
              & + \frac{\varepsilon^2}{2} \norm*{\bm{G}^{n+1} - \bm{G}^n}_w^2 + \frac{\Delta t^2}{2} C_A d \sum_{j=1}^d \norm*{\mathcal{D}^{(j),+} \bm{G}^{n+1} \abs*{\bm{Q}^{(j)}}}_w^2
        \notag
        \\
              & - |D_{\bm{\Omega}}| \Delta t \norm*{\sqrt{\bm{\sigma}^a} \bm{\rho}^{n+1}}^2
        - \Delta t \norm*{\sqrt{\bm{\sigma}^s}\bm{G}^{n+1}}_w^2 - \varepsilon^2 \Delta t \norm*{\sqrt{\bm{\sigma}^a} \bm{G}^{n+1}}_w^2
        \\
              & - \frac{\Delta t^2}2 |D_{\bm{\Omega}}| \left(
        \norm*{\bm{\sigma}^a \bm{\rho}^{n+1}}^2
        - \norm*{\mathcal{H}(\bm{G}^{n+1})}^2
        \right).
    \end{align*}
    Then, by dropping some negative terms and utilizing the lower bound of the scattering coefficient $\sigma^s \ge \sigma^s_0$, we have
    \begin{align*}
        \frac{1}{2} \left(E^{n+1}_{1} - E^n_{1}\right)
        \le{} &
        - \varepsilon \Delta t \sum_{j=1}^d \frac{\Delta x^{(j)}}2 \inner*{\mathcal{D}^{(j),+} \bm{G}^{n+1} \abs*{\bm{Q}^{(j)}}, \mathcal{D}^{(j),+} \bm{G}^{n+1}}_w
        \\
              & + \frac{\Delta t^2}2 C_A d \sum_{j=1}^d \norm*{\mathcal{D}^{(j),+} \bm{G}^{n+1} \abs*{\bm{Q}^{(j)}}}_w^2
        - \Delta t \sigma_0^s \norm*{\bm{G}^{n+1}}_w^2
        \\
        & + \frac{\Delta t^2}2 |D_{\bm{\Omega}}| \norm*{\mathcal{H}(\bm{G}^{n+1})}^2.
    \end{align*}
    It is easy to verify that
    \begin{align*}
        \norm*{\bm{G}^{n+1}}_w^2 & \ge \frac{1}{d} \sum_{j=1}^d \frac{(\Delta x^{(j)})^2}{4} \norm*{\mathcal{D}^{(j),-} \bm{G}^{n+1}}_w^2,
    \end{align*}
    and
    \[
        |D_{\bm{\Omega}}|^2 \norm*{\mathcal{H}(\bm{G}^{n+1})}^2 \le{} d \sum_{j=1}^d
        \norm*{\mathcal{D}^{(j),-} \bm{G}^{n+1} \bm{Q}^{(j)} \bm{w}}^2.
    \]
    To guarantee that $E^{n+1}_{\theta} \le{} E^n_{\theta}$, it is sufficient to show that
    \begin{align*}
         & \varepsilon \Delta t \sum_{j=1}^d \frac{\Delta x^{(j)}}2 \inner*{\mathcal{D}^{(j),+} \bm{G}^{n+1} \abs*{\bm{Q}^{(j)}}, \mathcal{D}^{(j),+} \bm{G}^{n+1}}_w
        \\
        & - \frac{\Delta t^2}2 C_A d \sum_{j=1}^d \norm*{\mathcal{D}^{(j),+} \bm{G}^{n+1} \abs*{\bm{Q}^{(j)}}}_w^2
         + \Delta t \sigma_0^s \sum_{j=1}^d \frac{(\Delta x^{(j)})^2}{4d} \norm*{\mathcal{D}^{(j),-} \bm{G}^{n+1}}_w^2
        \\
        & - \frac{\Delta t^2}2 \frac{d}{|D_{\bm{\Omega}}|} \sum_{j=1}^d \norm*{\mathcal{D}^{(j),-} \bm{G}^{n+1} \bm{Q}^{(j)} \bm{w}}^2
        \ge 0.
    \end{align*}
    Our objective is to prove that for $j=1,\dots,d$,
    \begin{align*}
        \bm{S}^{(j)} :={} &
        \frac{\varepsilon \Delta t \Delta x^{(j)}}2 \bm{M}^2 |\bm{Q}^{(j)}|
        - \frac{\Delta t^2}2 C_A d \bm{M}^2 \abs*{\bm{Q}^{(j)}}^2
        + \frac{\sigma_0^s \Delta t (\Delta x^{(j)})^2}{4 d} \bm{M}^2
        \\
        & - \frac{\Delta t^2 \,d}{2 |D_{\bm{\Omega}}|} \bm{Q}^{(j)} \bm{w} \bm{w}^\top (\bm{Q}^{(j)})^\top,
    \end{align*}
    is positive definite if $\Delta t$ is sufficiently small.
    Using Lemma~\ref{lemma:lm-3}, we have
    \begin{align*}
        \bm{S}^{(j)} \ge{} &
        \frac{\varepsilon \Delta t \Delta x^{(j)}}2 \bm{M}^2 |\bm{Q}^{(j)}|
        - \frac{\Delta t^2}2 C_A d \bm{M}^2 \abs*{\bm{Q}^{(j)}}^2
        + \frac{\sigma_0^s \Delta t (\Delta x^{(j)})^2}{4 d} \bm{M}^2
        \\
        & - \frac{\Delta t^2 \,d}{2} C_B^{(j)} \abs*{\bm{Q}^{(j)}} \bm{M}^2.
    \end{align*}
    Hence,
    the matrices $\{\bm{S}^{(j)},j=1,\dots,d\}$ are positive definite if $\Delta t$ is sufficiently small such that for all $j,k$
    \begin{align*}
        \Delta t w_k \left(
        \frac{\varepsilon \Delta x^{(j)}}2 \abs*{\Omega^{(j)}_k} - \frac{\Delta t}2 C_A d \abs*{\Omega^{(j)}_k}^2
        + \frac{\sigma_0^s (\Delta x^{(j)})^2}{4 d}
        - \frac{\Delta t d}{2} C_B^{(j)} \abs*{\Omega^{(j)}_k}
        \right)
        \ge{} & 0.
    \end{align*}
    Thus, we derive the following bound
    \begin{align*}
        \Delta t \le{} &
        \min_{j = 1,\dots,d} \left(
        \frac{\varepsilon (\Delta x^{(j)} / d)}{C_A  \norm*{\bm{\Omega}}_{\infty} + C_B^{(j)}}
        +
        \frac{\sigma_0^s (\Delta x^{(j)} / d)^2}{2 C_A \norm*{\bm{\Omega}}_{\infty}^2 + 2 C_B^{(j)} \norm*{\bm{\Omega}}_{\infty}}
        \right),
    \end{align*}
    where $C_A = 1$, $C_B^{(j)} = \frac{1}{|D_{\bm{\Omega}}|} \bm{w}^\top \abs*{\bm{\Omega}^{(j)}} \bm{1} = \frac12$ for both one- and two-dimensional cases.
    Here we use $\norm*{\bm{\Omega}}_{\infty}$ to replace $\max_{k} \abs*{\Omega^{(j)}_k}$ for simplicity.
\end{proof}

\begin{proof}[\textbf{Proof of Theorem~\ref{thm:IMEX-S-theta-stability}}]
    Following the same procedure as in the proof of Theorem~\ref{thm:IMEX-stability}, we derive
    \begin{equation}\label{eq:proof-IMEX-S-1}
        \begin{aligned}
             & \frac{1}{2} \left(E^{n+1}_{1} - E^n_{1}\right)
            +  \frac12 \left(|D_{\bm{\Omega}}| \norm*{\bm{\rho}^{n+1} - \bm{\rho}^{n}}^2
            + \varepsilon^2 \norm*{\bm{G}^{n+1} - \bm{G}^{n}}_w^2\right) \\
             & + \varepsilon \Delta t
            \inner*{\mathcal{A}(\bm{G}^n), \bm{G}^{n+1}}_w
            + |D_{\bm{\Omega}}| \Delta t \norm*{\sqrt{\bm{\sigma}^a} \bm{\rho}^{n+1}}^2
            \\
            & + \Delta t \norm*{\sqrt{\bm{\sigma}^s}\bm{G}^{n+1}}_w^2 + \varepsilon^2 \Delta t \norm*{\sqrt{\bm{\sigma}^a} \bm{G}^{n+1}}_w^2
            = 0.
        \end{aligned}
    \end{equation}
    We first bound the scattering term by $\sigma^s_0 \norm*{\bm{G}^{n+1}}_w^2 \le \norm*{\sqrt{\bm{\sigma}^s}\bm{G}^{n+1}}_w^2$.
    By further decomposing $\norm*{\bm{G}^{n+1}}_w^2$ as
    \begin{align*}
        \norm*{\bm{G}^{n+1}}_w^2
        ={} & \theta \norm*{\bm{G}^{n+1}}_w^2
        + (1-\theta) \left(\frac12\norm*{\bm{G}^{n+1}}_w^2 - \frac12\norm*{\bm{G}^{n}}_w^2\right)
        \\
            & + (1-\theta) \left(
        \frac14 \norm*{(\bm{G}^{n+1}-\bm{G}^{n})}_w^2 + \frac14 \norm*{(\bm{G}^{n+1}+\bm{G}^{n})}_w^2
        \right).
    \end{align*}
    and substituting this into \eqref{eq:proof-IMEX-S-1}, we arrive at the energy inequality:
    \begin{align*}
         & \frac{1}{2} \left(E^{n+1}_{\theta} - E^n_{\theta}\right)
        +  \frac12 \left(|D_{\bm{\Omega}}| \norm*{\bm{\rho}^{n+1} - \bm{\rho}^{n}}^2
        + \varepsilon^2 \norm*{\bm{G}^{n+1} - \bm{G}^{n}}_w^2\right)
        \notag                                                      \\
         & + \varepsilon \Delta t
        \inner*{\mathcal{A}(\bm{G}^n), \bm{G}^{n+1}}_w
        + |D_{\bm{\Omega}}| \Delta t \norm*{\sqrt{\bm{\sigma}^a} \bm{\rho}^{n+1}}^2
        + \Delta t \theta \sigma^s_0 \norm*{\bm{G}^{n+1}}_w^2
        \notag
        \\ & + \varepsilon^2 \Delta t \norm*{\sqrt{\bm{\sigma}^a} \bm{G}^{n+1}}_w^2
        \\ & + \Delta t (1-\theta) \left(
        \frac14 \sigma^s_0 \norm*{(\bm{G}^{n+1}-\bm{G}^{n})}_w^2 + \frac14 \sigma^s_0 \norm*{(\bm{G}^{n+1}+\bm{G}^{n})}_w^2
        \right)
        \le{} 0.
    \end{align*}
    Following Lemma~\ref{lemma:lm-1}, we have
    \begin{align*}
        \inner*{\mathcal{A}(\bm{G}^n), \bm{G}^{n+1}}_w
        ={} &
        \sum_{j=1}^d \frac{\Delta x^{(j)}}2 \inner*{\mathcal{D}^{(j),+} \bm{G}^{n+1} \abs*{\bm{Q}^{(j)}}, \mathcal{D}^{(j),+} \bm{G}^{n+1}}_w
        \\
        & - \inner*{\mathcal{A}^*(\bm{G}^{n+1}), \bm{G}^{n+1} - \bm{G}^n}_w.
    \end{align*}
    Applying Young's inequality and Lemma~\ref{lemma:lm-2}, we obtain
    \begin{align*}
        \abs*{\inner*{\mathcal{A}^*(\bm{G}^{n+1}), \bm{G}^{n+1} - \bm{G}^n}_w}
        \le{} &
        \eta \norm*{\bm{G}^{n+1} - \bm{G}^n}_w^2 + \frac{C_A d}{4\eta} \sum_{j=1}^d \norm*{\mathcal{D}^{(j),+} \bm{G}^{n+1} \abs*{\bm{Q}^{(j)}}}_w^2,
    \end{align*}
    where $\eta > 0$ will be determined later.
    Hence,
    \begin{align*}
        \frac{1}{2} \left(E^{n+1}_{\theta} - E^n_{\theta}\right)
        \le{} &
        -\left(\frac{\varepsilon^2}2 + \frac{\Delta t (1-\theta)}4 \sigma^s_0 - \varepsilon \Delta t \eta\right) \norm*{\bm{G}^{n+1} - \bm{G}^{n}}_w^2
        \\
              & -
        \varepsilon \Delta t \sum_{j=1}^d
        \frac{\Delta x^{(j)}}2 \inner*{\mathcal{D}^{(j),+} \bm{G}^{n+1} \abs*{\bm{Q}^{(j)}}, \mathcal{D}^{(j),+} \bm{G}^{n+1}}_w
        \\
              & +
        \varepsilon \Delta t \sum_{j=1}^d
        \frac{C_A d}{4\eta} \norm*{\mathcal{D}^{(j),+} \bm{G}^{n+1} \abs*{\bm{Q}^{(j)}}}_w^2.
    \end{align*}
    To ensure that $E^{n+1}_{\theta} \le{} E^n_{\theta}$, it is sufficient to impose the conditions
    \begin{equation}\label{eq:proof-IMEX-S-2}
        \frac{\varepsilon^2}2 + \frac{\Delta t(1-\theta)}4 \sigma^s_0  - \varepsilon \Delta t \eta \ge 0,
    \end{equation}
    and
    \begin{align*}
        \frac{\Delta x^{(j)}}2 \inner*{\mathcal{D}^{(j),+} \bm{G}^{n+1} \abs*{\bm{Q}^{(j)}}, \mathcal{D}^{(j),+} \bm{G}^{n+1}}_w
        - \frac{C_A d}{4\eta} \norm*{\mathcal{D}^{(j),+} \bm{G}^{n+1} \abs*{\bm{Q}^{(j)}}}_w^2 \ge 0
    \end{align*}
    for $j=1,\ldots,d$.
    The parameter $\eta$ can be selected to satisfy
    \begin{align*}
        \frac{\Delta x^{(j)}}2 \abs*{\bm{Q}^{(j)}}
        \ge
        \frac{C_A d}{4\eta} \abs*{\bm{Q}^{(j)}}^2, \quad j=1,\dots,d.
    \end{align*}
    This inequality simplifies to
    \begin{equation}\label{eq:proof-IMEX-S-3}
        \eta \ge \frac{C_A d}2 \norm*{\bm{\Omega}}_{\infty}
        \left(\max_j \frac{1}{\Delta x^{(j)}}\right)
        =  \frac{C_A d \norm*{\bm{\Omega}}_{\infty}}{2\min_{j} \Delta x^{(j)}}.
    \end{equation}
    By combining \eqref{eq:proof-IMEX-S-2} and \eqref{eq:proof-IMEX-S-3}, we obtain
    \begin{align*}
                   & \frac{\varepsilon^2}2 + \frac{\Delta t(1-\theta)}4 \sigma^s_0 \ge \varepsilon \Delta t \eta \ge \varepsilon \Delta t \frac{C_A d \norm*{\bm{\Omega}}_{\infty}}{2 \min_{j} \Delta x^{(j)}}
        \\
        \iff \quad & \left( \frac{\varepsilon\,C_A d}{2\min_{j} \Delta x^{(j)}} - \frac{(1-\theta)}{4\norm*{\bm{\Omega}}_{\infty}} \sigma^s_0\right) \Delta t \le \frac{\varepsilon^2}{2\norm*{\bm{\Omega}}_{\infty}},
    \end{align*}
    where $C_A = 1$.
    This completes the proof.
\end{proof}

\section{Low-Rank method} \label{sec:numerical-method-lowrank}

We now combine IMEX/IMEX-S time discretizations with DLRA techniques to reduce both computational cost and memory consumption.
Specifically, the coupled macro–micro system is advanced in time by either the IMEX or the IMEX-S scheme, while the microscopic correction is evolved using BUG-type low-rank integrators.
The combination of IMEX schemes with BUG integrators has been investigated previously in the context of $P_N$ discretizations \cite{einkemmer2024asymptoticpreserving,patwardhan2025asymptoticpreserving}.
Here, we instead consider the $S_N$ framework, in which angular integrals are approximated by quadrature rules and are also popularly used. This would result in a weighted discrete angular space and a corresponding energy structure. Such a weighted setting necessitates a compatible modification of the low-rank formulation to ensure stability.
Our primary focus is on the combination of the IMEX-S scheme with an energy-consistent dynamical low-rank approximation.
This approach yields an efficient solver that remains uniformly stable in the diffusion limit, achieving efficiency in phase space through the low-rank evolution and $S_N$ discretization, and in time through the IMEX-S scheme.
For conciseness, we describe the numerical method in detail only for the representative IMEX-S–BUG combination.
Other combinations considered in the numerical experiments are treated analogously.

\subsection{Energy-Consistent Low-Rank Approximation}

We approximate the microscopic correction using a low-rank representation that is compatible with the discrete energy structure of the $S_N$ discretization.
Rather than a standard decomposition, we introduce a low-rank ansatz specifically constructed to be consistent with the energy functional induced by the quadrature weights.
Specifically, the microscopic correction is approximated in the following energy-consistent low-rank form:
\begin{equation*}
    \bm{G}(t) \bm{M} \approx \sum_{i,j=1}^r \bm{X}_i(t) S_{ij}(t) \bm{V}_j(t)^\top = \bm{X}(t) \bm{S}(t) \bm{V}(t)^\top \in \mathbb{R}^{N_{\bm{g}} \times N_{\bm{\Omega}}},
\end{equation*}
where $\bm{X}(t) \in \mathbb{R}^{N_{\bm{g}} \times r}$, $\bm{S}(t) \in \mathbb{R}^{r\times r}$ and $\bm{V}(t) \in \mathbb{R}^{N_{\bm{\Omega}} \times r}$,
and $\bm{M}$ is defined previously in \eqref{eq:M}.
The columns of $\bm{X}(t)$ and $\bm{V}(t)$ are orthonormal.
Multiplying the microscopic equation from the right by $\bm{M}$ yields an evolution equation for
$\bm{X}(t)\bm{S}(t)\bm{V}(t)^\top$,
\begin{align*}
    \partial_t \left(\bm{X}(t) \bm{S}(t) \bm{V}(t)^\top\right) ={}
     & - \frac{1}{\varepsilon} \mathcal{B}\left(\bm{X}(t) \bm{S}(t) \bm{V}(t)^\top\right)  - \frac{1}{\varepsilon^2}  \mathcal{J}(\rho) \bm{M}
    - \frac{1}{\varepsilon^2} \bm{\sigma}^s \bm{X}(t) \bm{S}(t) \bm{V}(t)^\top
    \\
     & - \bm{\sigma}^a \bm{X}(t) \bm{S}(t) \bm{V}(t)^\top,
\end{align*}
where the operator $\mathcal{B}$ is defined by
\begin{align*}
    \mathcal{B}\left(\bm{X}(t) \bm{S}(t) \bm{V}(t)^\top\right)
    ={} & \mathcal{A}\left(\bm{X}(t) \bm{S}(t) \bm{V}(t)^\top \bm{M}^{-1} \right) \left(\mathbf{I} - \frac1{|D_{\bm{\Omega}}|} \bm{w} \bm{1}^\top \right) \bm{M}.
\end{align*}

We now describe a representative implementation based on the IMEX-S scheme combined with the BUG integrator \cite{koch2007dynamical}:
at each time step, the macroscopic variable $\bm{\rho}^{n+1}$ is first obtained by solving the linear system corresponding to \eqref{eq:macro-micro-imex-s-schur-1},
\begin{align*}
     & \left(\frac{1}{\Delta t} + \bm{\sigma}^a
    - \frac{1}{|D_{\bm{\Omega}}| \varepsilon^2}
    \mathcal{D}^- \left\{
    R
    \mathcal{D}^+ (\cdot) \bm{1}^\top \bm{Q}
    \right\} \bm{Q} \bm{w}
    \right) \bm{\rho}^{n+1}
    \notag                                                                                                \\
     & \quad = \frac{1}{\Delta t} \bm{\rho}^n + \bm{\Phi}^{n+1} - \frac1{|D_{\bm{\Omega}}|} \mathcal{D}^-
    \left\{
    R
    \left(\frac{1}{\Delta t} \bm{G}^n - \frac{1}{\varepsilon} \mathcal{A}(\bm{G}^n) \left(\mathbf{I} - \frac1{|D_{\bm{\Omega}}|} \bm{w} \bm{1}^\top \right) \right)
    \right\} \bm{Q} \bm{w}.
\end{align*}
where $R := \left(\frac{1}{\Delta t} + \frac{\sigma^s}{\varepsilon^2} + \sigma^a\right)^{-1}$.
Given $\bm{\rho}^{n+1}$, the low-rank factors are advanced from $t_n$ to $t_{n+1}$ by applying the BUG integrator to the low-rank evolution equation.
The corresponding K-, L-, and S-steps follow the BUG integrator and are detailed below:
\begin{enumerate}
    \item \textbf{K-step:} For an initial condition $\bm{K}^n(x) = \bm{X}^n(x) \bm{S}^n \in\mathbb{R}^{N_x \times r}$,
          we solve the following equation to obtain $\bm{K}^{n+1}$
          \begin{align*}
              \frac{\bm{K}^{n+1} - \bm{K}^n}{\Delta t}
              ={} &
              -\frac{1}{\varepsilon} \mathcal{B}\left(\bm{K} (\bm{V}^n)^\top \right) \bm{V}^n
              - \frac{1}{\varepsilon^2} \mathcal{J}(\bm{\rho}^{n+1}) \bm{M} \bm{V}^n
              \\ & - \frac{1}{\varepsilon^2} \bm{\sigma}^s \bm{K}^{n+1} - \bm{\sigma}^a \bm{K}^{n+1}.
          \end{align*}

    \item \textbf{L-step:} For an initial condition $\bm{L}^n = \bm{V}^n (\bm{S}^n)^\top \in \mathbb{R}^{N \times r}$,
          we solve the following equation to obtain $\bm{L}^{n+1}$
          \begin{align*}
              \frac{\bm{L}^{n+1} - \bm{L}^n}{\Delta t}
              ={} &
              -\frac{1}{\varepsilon} \mathcal{B}\left(\bm{X}^n \bm{L}^\top \right)^\top \bm{X}^n - \frac{1}{\varepsilon^2} \bm{M} (\mathcal{J}(\bm{\rho}^{n+1}))^\top \bm{X}^n
              \notag                                                                                                                                      \\
                  & - \frac{1}{\varepsilon^2} \bm{L}^{n+1} (\bm{X}^n)^\top \bm{\sigma}^s \bm{X}^n -  \bm{L}^{n+1} (\bm{X}^n)^\top \bm{\sigma}^a \bm{X}^n.
          \end{align*}

    \item Update basis: $[\bm{X}^{n+1}, \sim] = \qr(\bm{K}^{n+1})$, $[\bm{V}^{n+1}, \sim] = \qr(\bm{L}^{n+1})$.

    \item \textbf{S-step:} For an initial condition $\widetilde{\bm{S}}^n = (\bm{X}^{n+1})^\top \bm{X}^n \bm{S}^n (\bm{V}^n)^\top \bm{V}^{n+1} \in \mathbb{R}^{r' \times r'}$,
          we solve the following equation to obtain $\bm{S}^{n+1}$.
          \begin{align*}
              \frac{\bm{S}^{n+1} - \widetilde{\bm{S}}^n}{\Delta t}  ={}
               & -\frac{1}{\varepsilon} (\bm{X}^{n+1})^\top \mathcal{B}\left(\bm{X}^{n+1} \bm{S} (\bm{V}^{n+1})^\top \right) \bm{V}^{n+1}
              \\
               & - \frac{1}{\varepsilon^2} (\bm{X}^{n+1})^\top \mathcal{J}(\bm{\rho}^{n+1}) \bm{M} \bm{V}^{n+1}
              \notag                                                                                                                                                  \\
               & - \frac{1}{\varepsilon^2} (\bm{X}^{n+1})^\top \bm{\sigma}^s \bm{X}^{n+1} \bm{S}^{n+1} - (\bm{X}^{n+1})^\top \bm{\sigma}^a \bm{X}^{n+1} \bm{S}^{n+1}.
          \end{align*}

    \item Update $\bm{G}^{n+1} \bm{M} = \bm{X}^{n+1} \bm{S}^{n+1} (\bm{V}^{n+1})^\top$.
\end{enumerate}
Here $\qr(\cdot)$ denotes a QR factorization.

For the IMEX scheme, the macroscopic variable is updated after advancing the
microscopic correction.
In this case, the BUG integrator is applied with the macroscopic density frozen at time level $t_n$, i.e., $\bm{\rho}^{n}$ is used in $\mathcal{J}(\bm{\rho})$ throughout the K-, L-, and S-steps.

From the computational perspective, the low-rank formulation only evolves three small matrices $\bm{X}$, $\bm{S}$, and $\bm{V}$ at each time step,
with the most expensive operation being at the SVD of the small $r\times r$ matrix $\bm{S}$ and the QR factorizations of the matrices $\bm{K}^{n+1}$ and $\bm{L}^{n+1}$.
Therefore, if the spatial and angular dimensions are discretized with $N$ points per dimension, the full-rank system involves $\mathcal{O}(N^{d_x + d_v})$ degrees of freedom.
When the rank $r \ll N$, the dynamical low-rank approximation reduces both memory consumption and per-step computational cost to $\mathcal{O}(N^{\max(d_x,d_v)})$, offering a substantial efficiency advantage, which will be confirmed by the results in Section~\ref{sec:numerical-results}.
Moreover, since the macroscopic variable $\bm{\rho}$ is independent of the angular directions, its update involves $\mathcal{O}(N^{d_x})$ operations, directly proportional to the spatial degrees of freedom, which is smaller than the cost of updating the low-rank microscopic component.

\subsection{Asymptotic-Preserving Property}

The AP property of the proposed low-rank schemes builds on existing IMEX-based dynamical low-rank analyses for kinetic equations \cite{einkemmer2021asymptoticpreserving,einkemmer2024asymptoticpreserving,patwardhan2025asymptoticpreserving}.
For the sake of self contained, we perform an asymptotic analysis in one spatial dimension using the IMEX-S–BUG scheme.

In the limit $\varepsilon \to 0$, the K-step, L-step, and S-step of the IMEX-S-BUG scheme reduce to
\begin{subequations}
    \begin{align}
        \bm{K}^{n+1} & = - (\bm{\sigma}^s)^{-1} \mathcal{J}(\bm{\rho}^{n+1}) \bm{M} \bm{V}^n, \label{eq:ap-1}
        \\
        \bm{L}^{n+1} (\bm{X}^n)^\top \bm{\sigma}^s \bm{X}^n
                     & = - \bm{M} (\mathcal{J}(\bm{\rho}^{n+1}))^\top \bm{X}^n, \label{eq:ap-2}
        \\
        (\bm{X}^{n+1})^\top \bm{\sigma}^s \bm{X}^{n+1} \bm{S}^{n+1}
       & ={}          - (\bm{X}^{n+1})^\top \mathcal{J}(\bm{\rho}^{n+1}) \bm{M} \bm{V}^{n+1}. \label{eq:ap-3}
    \end{align}
\end{subequations}
Using the definition of the discrete operator $\mathcal{H}$ together with the
low-rank representation
$\bm{G}^{n+1} = \bm{X}^{n+1} \bm{S}^{n+1} (\bm{V}^{n+1})^\top \bm{M}^{-1}$,
we obtain
\begin{align*}
    |D_{\bm{\Omega}}| \mathcal{H}(\bm{G}^{n+1})
    ={} & \mathcal{D}^{x,-} \bm{G}^{n+1} \bm{Q}^{x} \bm{w}
    ={} \mathcal{D}^{x,-} \bm{X}^{n+1} \bm{S}^{n+1} (\bm{V}^{n+1})^\top \bm{M} \bm{Q}^{x} \bm{1}.
\end{align*}
Since $(\bm{X}^{n+1})^\top \bm{\sigma}^s \bm{X}^{n+1}$ is invertible, the S-step \eqref{eq:ap-3} yields
\begin{equation}\label{eq:ap-H}
    \begin{aligned}
            & |D_{\bm{\Omega}}| \mathcal{H}(\bm{G}^{n+1})
        \\
        ={} & - \mathcal{D}^{x,-} \bm{X}^{n+1} \left((\bm{X}^{n+1})^\top \bm{\sigma}^s \bm{X}^{n+1}\right)^{-1} (\bm{X}^{n+1})^\top \mathcal{J}(\bm{\rho}^{n+1}) \bm{M} \bm{V}^{n+1} (\bm{V}^{n+1})^\top \bm{M} \bm{Q}^{x} \bm{1}.
    \end{aligned}
\end{equation}
To identify the diffusion-limit form of $\mathcal{H}(\bm{G}^{n+1})$, we need to verify that
\begin{equation}\label{eq:ap-claim}
    \mathcal{H}(\bm{G}^{n+1})
    ={} - \mathcal{D}^{x,-} \left(
    \inner*{\bm{\Omega} \otimes \bm{\Omega}}_{\bm{\Omega}} (\bm{\sigma}^s)^{-1} \mathcal{D}^{x,+} \bm{\rho^{n+1}}
    \right).
\end{equation}
From \eqref{eq:ap-1}, it follows that
\begin{align*}
    - (\bm{\sigma}^s)^{-1} \mathcal{D}^{x,+} \bm{\rho}^{n+1} & \in \textnormal{Range}(\bm{K}^{n+1})  = \textnormal{Range}(\bm{X}^{n+1}).
\end{align*}
Hence,
\begin{equation}\label{eq:ap-H-1}
    \begin{aligned}
            & - \mathcal{D}^{x,-} \bm{X}^{n+1} \left((\bm{X}^{n+1})^\top \bm{\sigma}^s \bm{X}^{n+1}\right)^{-1} (\bm{X}^{n+1})^\top \mathcal{J}(\bm{\rho}^{n+1})
        \\
        ={} & - \mathcal{D}^{x,-} \bm{X}^{n+1} \left((\bm{X}^{n+1})^\top \bm{\sigma}^s \bm{X}^{n+1}\right)^{-1} (\bm{X}^{n+1})^\top \mathcal{D}^{x,+} \bm{\rho}^{n+1} \bm{1}^\top \bm{Q}^x
        \\
        ={} & - \mathcal{D}^{x,-} (\bm{\sigma}^s)^{-1} \mathcal{D}^{x,+} \bm{\rho}^{n+1} \bm{1}^\top \bm{Q}^x,
    \end{aligned}
\end{equation}
Similarly, \eqref{eq:ap-2} implies that
\begin{align*}
    \bm{M} \bm{Q}^x \bm{1} & \in \textnormal{Range}(\bm{L}^{n+1})  = \textnormal{Range}(\bm{V}^{n+1}),
\end{align*}
which yields
\begin{equation}\label{eq:ap-H-2}
    \bm{M} \bm{V}^{n+1} (\bm{V}^{n+1})^\top \bm{M} \bm{Q}^{x} \bm{1} =  \bm{M}^2 \bm{Q}^x \bm{1}.
\end{equation}
The substitution of \eqref{eq:ap-H-1} and \eqref{eq:ap-H-2} into \eqref{eq:ap-H} leads to \eqref{eq:ap-claim}. Inserting \eqref{eq:ap-claim} into the macroscopic equation and taking the limit $\varepsilon \to 0$, we obtain
\begin{align*}
    0 ={} & \frac{\bm{\rho}^{n+1} - \bm{\rho}^n}{\Delta t}
    -  \mathcal{D}^{x,-} \left(
    \inner*{\bm{\Omega} \otimes \bm{\Omega}}_{\bm{\Omega}} (\bm{\sigma}^s)^{-1} \mathcal{D}^{x,+} \bm{\rho^{n+1}}
    \right)
    + \bm{\sigma}^a \bm{\rho}^{n+1} - \bm{\Phi}^{n+1},
\end{align*}
which is precisely a consistent discretization of the diffusion equation \eqref{eq:diffusion}.
Consequently, the IMEX-S–BUG scheme is AP; furthermore, it inherits the unconditional stability in the diffusive limit, consistent with the properties established in Theorem~\ref{thm:IMEX-S-theta-stability} and Corollary~\ref{thm:IMEX-S-stability}.

As shown in \cite{patwardhan2025asymptoticpreserving}, in contrast to the fixed-rank BUG integrator, a naive application of the augmented BUG integrator does not necessarily preserve the AP property, since the truncation step may remove basis vectors that are essential in the diffusion limit.
To address this issue, we adopt the strategy proposed in \cite{patwardhan2025asymptoticpreserving} and modify the aBUG integrator by enriching the approximation spaces as
\begin{align*}
    [\widehat{X}^{n+1}, \sim] & = \qr([- (\bm{\sigma}^s)^{-1} \mathcal{D}^{x,+} \bm{\rho}^{n+1}, \bm{K}^{n+1},\bm{X}^n]), \\
     [\widehat{V}^{n+1}, \sim] & = \qr([\bm{M} \bm{Q}^x \bm{1}, \bm{L}^{n+1},\bm{V}^n]).
\end{align*}
In addition, the standard SVD truncation is replaced by a conservative SVD truncation \cite{einkemmer2023conservation} to ensure the preservation of the AP property.

\subsection{Zero Density Constraint for the Microscopic Component}

The microscopic component is required to satisfy the zero-density constraint $\inner*{\bm{g}}_{\bm{\Omega}} = 0$.
After discretization with the $S_N$ method, it corresponds to $\bm{G} \bm{w} = \bm{0}$. This property is automatically preserved by the full-rank scheme.
If $\bm{G}^n \bm{w} = \bm{0}$, then using
\begin{align*}
    \left(\mathbf{I} - \frac1{|D_v|} \bm{w} \bm{1}^\top \right) \bm{w} ={} \bm{0},
    \quad
    \bm{1}^\top \bm{Q}^{(j)} \bm{w} ={} 0, \quad j=1,\dots,d,
\end{align*}
it follows directly that $\bm{G}^{n+1}\bm{w}=\bm{0}$.
In the low-rank setting, preserving this constraint is equivalent to requiring
\[
    \bm{0} = \bm{G} \bm{w} = \bm{G} \bm{M}^2 \bm{1} = \bm{X}(t,x) \bm{S}(t) \bm{V}(t)^\top \bm{M} \bm{1}.
\]
Since the columns of $\bm{V}(t)$ constitute the basis functions in the angular space, this condition is satisfied by imposing the constraint $\bm{1}^\top \bm{M} \bm{V}(t) = \bm{0}^\top$.

For the BUG integrator, assume that at time level $t_n$ the low-rank approximation satisfies
\[
    \bm{1}^\top \bm{M}\bm{V}^n(\bm{S}^n)^\top(\bm{X}^n)^\top = \bm{0}^\top
    \,\,\Rightarrow\,\,
    \bm{1}^\top \bm{M}\bm{L}^n
    = \bm{1}^\top \bm{M}\bm{V}^n(\bm{S}^n)^\top
    = \bm{0}^\top.
\]
During the L-step, $\bm{L}^{n+1}$ satisfies
\begin{align*}
     & \bm{L}^{n+1}\left(
    1 + \frac{\Delta t}{\varepsilon^2} (\bm{X}^n)^\top \bm{\sigma}^s \bm{X}^n + (\bm{X}^n)^\top \bm{\sigma}^a \bm{X}^n\right)
    \\
     & ={} \bm{L}^n - \frac{\Delta t}{\varepsilon^2} \left(
    \varepsilon \mathcal{B}\left(\bm{X}^n \bm{L}^\top \right)^\top \bm{X}^n
    + \bm{M} (\mathcal{J}(\bm{\rho}^{n+1}))^\top \bm{X}^n
    \right).
\end{align*}
Multiplying from the left by $\bm{1}^\top \bm{M}^2$ and using
\[
    \bm{1}^\top \bm{M}^2
    \Bigl(\mathbf{I}-\frac{1}{|D_{\bm{\Omega}}|}\bm{1}\bm{w}^\top\Bigr)
    = \bm{0}^\top,
    \qquad
    \bm{1}^\top \bm{M}^2 (\bm{Q}^{(j)})^\top \bm{1} = 0, \quad j=1,\dots,d,
\]
we obtain $\bm{1}^\top \bm{M}\bm{L}^{n+1} = \bm{0}^\top$.
Consequently, if any basis $\bm{V}^{n+1}\in \mathrm{Range}(\bm{L}^{n+1})$ satisfies $\bm{1}^\top \bm{M}\bm{V}^{n+1} = \bm{0}^\top$,
the zero-density constraint would be preserved.

Similarly, for the aBUG and AP-aBUG integrators, $\bm{V}^{n+1}$ remains within the feasible subspace.
For the latter in one spatial dimension, the inclusion $\bm{V}^{n+1} \in \mathrm{Range}\bigl(\bm{M} \bm{Q}^x \bm{1}, \bm{L}^{n+1}, \bm{V}^n\bigr)$ preserves the constraint, provided that $\bm{1}^\top \bm{M}^2 \bm{Q}^x \bm{1} = 0$.

A noteworthy technical detail arises regarding the numerical stability of the QR factorization.
For example, in the BUG integrator, the updated matrix $\bm{L}^{n+1}$ may exhibit numerical rank deficiency.
In such cases, a standard QR algorithm, lacking awareness of the underlying physical structure, may introduce basis vectors that drift away from the zero-density subspace. To robustly enforce the constraint within machine precision, we employ a modified QR procedure:
\begin{enumerate}
    \item Construct an orthonormal basis $\bm{Z} \in \mathbb{R}^{N_{\bm{\Omega}} \times (N_{\bm{\Omega}}-1)}$ for the null space of the constraint, such that $\bm{1}^\top \bm{M} \bm{Z} = \bm{0}^\top$. Then $\bm{L}^{n+1}=\bm{Z}\bm{Z}^\top \bm{L}^{n+1}$.
    \item Compute the QR factorization $[\widetilde{\bm{Q}},\widetilde{\bm{R}}] = \qr(\bm{Z}^\top \bm{L}^{n+1})$.
    \item Basis Update: Set $\bm{V}^{n+1} = \bm{Z} \widetilde{\bm{Q}}$. This construction ensures that $\bm{V}^{n+1}$ is orthonormal and strictly satisfies $\bm{1}^\top \bm{M} \bm{V}^{n+1} = \bm{0}^\top$.
\end{enumerate}

\subsection{Energy Stability of the Energy-Consistent Low-Rank method}

The proposed low-rank schemes inherit the same energy stability properties as their full-rank counterparts.
The proof proceeds along similar lines to the analysis in \cite{einkemmer2024asymptoticpreserving} for $P_N$ discretizations.
However, a key difference in the $S_N$ framework is the necessary treatment of quadrature weights, which we discuss in detail in Remark~\ref{rem:weighted-consistency}
Since the stability analysis for different low-rank formulations and time integrators follows the same line of argument, we focus on the low-rank IMEX-S scheme coupled with the BUG integrator as a representative case.
Without loss of generality, we state and prove the corresponding energy stability result for the IMEX-S-BUG scheme below.

\begin{theorem}[Energy $\theta$-stability of the IMEX-S-BUG scheme]
    Assume that $\Phi = 0$ and that the time step size $\Delta t$ satisfies the CFL condition \eqref{eq:IMEX-S-theta-time-step} associated with the IMEX-S scheme.
    Then, the low-rank IMEX-S scheme is $\theta$-stable.
\end{theorem}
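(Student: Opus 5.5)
The plan is to reduce the low-rank update to a perturbation of the full-rank IMEX-S step that can be absorbed into the energy argument of Theorem~\ref{thm:IMEX-S-theta-stability}. Write $\bm{Y}^{n} = \bm{G}^{n}\bm{M} = \bm{X}^n\bm{S}^n(\bm{V}^n)^\top$. The energy-consistent ansatz is chosen so that the weighted norm becomes a Frobenius norm:
\[
\norm*{\bm{G}}_w^2=\big(\textstyle\prod_j\Delta x^{(j)}\big)\norm{\bm{G}\bm{M}}_F^2 .
\]
Let $P_X=\bm{X}^{n+1}(\bm{X}^{n+1})^\top$ and $P_V=\bm{V}^{n+1}(\bm{V}^{n+1})^\top$. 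The key observation concerns the S-step. Multiplying it by $\bm{X}^{n+1}$ on the left and $(\bm{V}^{n+1})^\top$ on the right gives a Galerkin equation for $\bm{Y}^{n+1}$:
\[
\frac{\bm{Y}^{n+1}-P_X\bm{Y}^nP_V}{\Delta t}=P_X\Big[-\tfrac1\varepsilon\mathcal{B}(\bm{Y}^{n})-\tfrac1{\varepsilon^2}\mathcal{J}(\bm{\rho}^{n+1})\bm{M}-\tfrac1{\varepsilon^2}\bm{\sigma}^s\bm{Y}^{n+1}-\bm{\sigma}^a\bm{Y}^{n+1}\Big]P_V .
\]
Here the explicit transport term is evaluated at the projected old data, as the scheme's IMEX-S structure dictates; I will check that the S-step uses $\widetilde{\bm{S}}^n$ in $\mathcal{B}$, i.e. effectively $P_X\bm{Y}^nP_V$.

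Next I test this equation against $\bm{Y}^{n+1}$ in the Frobenius inner product. Since $\bm{Y}^{n+1}=P_X\bm{Y}^{n+1}P_V$, the projections can be moved onto the test function and disappear. This yields exactly the micro energy identity \eqref{eq:rte2-full-2}, with $\bm{G}^n$ replaced by $\widetilde{\bm{G}}^n:=P_X\bm{Y}^nP_V\bm{M}^{-1}$. The same holds for the coupling term: $\inner*{\mathcal{J}(\bm{\rho}^{n+1}),\bm{G}^{n+1}}_w$ is unchanged, so the summation-by-parts identity $|D_{\bm{\Omega}}|\inner*{\bm{\rho},\mathcal{H}(\bm{G})}=-\inner*{\mathcal{J}(\bm{\rho}),\bm{G}}_w$ still cancels against the macro equation. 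The macro equation is solved exactly with $\bm{G}^{n+1}$, and the zero-density constraint $\bm{G}^{n+1}\bm{w}=\bm{0}$ is preserved by the modified QR of the previous subsection. This constraint is needed to drop the $\frac{1}{|D_{\bm{\Omega}}|}\bm{w}\bm{1}^\top$ part of $\mathcal{B}$ when pairing with $\bm{G}^{n+1}$. Repeating the proof of Theorem~\ref{thm:IMEX-S-theta-stability} verbatim, using Lemmas~\ref{lemma:lm-1} and \ref{lemma:lm-2} under the CFL condition \eqref{eq:IMEX-S-theta-time-step}, should then give
\[
\widetilde E^{\,n+1}_\theta\le |D_{\bm{\Omega}}|\norm*{\bm{\rho}^n}^2+\big(\varepsilon^2+(1-\theta)\Delta t\sigma^s_0\big)\norm*{\widetilde{\bm{G}}^n}_w^2 .
\]

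The final step uses that orthogonal projections are contractions in the Frobenius norm, so $\norm{\widetilde{\bm{G}}^n}_w=\norm{P_X\bm{Y}^nP_V}_F\le\norm{\bm{Y}^n}_F=\norm{\bm{G}^n}_w$. This gives $E^{n+1}_\theta\le E^n_\theta$. It is exactly where the weighted ansatz $\bm{G}\bm{M}\approx\bm{X}\bm{S}\bm{V}^\top$ is essential: with an unweighted factorization the projections would not be orthogonal in $\inner*{\cdot,\cdot}_w$ and the contraction would fail.

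The main obstacle is the Lemma~\ref{lemma:lm-1} step. There the full-rank proof writes $\mathcal{A}(\bm{G}^n)=\mathcal{A}(\bm{G}^{n+1})-\mathcal{A}(\bm{G}^{n+1}-\bm{G}^n)$ and must now use $\widetilde{\bm{G}}^n$ in place of $\bm{G}^n$. The dissipation term $\norm{\bm{G}^{n+1}-\widetilde{\bm{G}}^n}_w^2$ comes from the polarization of the time difference, so the difference appearing in the Young's inequality must be the same one, $\bm{G}^{n+1}-\widetilde{\bm{G}}^n$. I must make sure the explicit operator in the S-step is applied to $\widetilde{\bm{G}}^n$, not $\bm{G}^n$. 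If the scheme instead used $\bm{G}^n$, I would try to bound the extra term $\inner*{\mathcal{A}(\bm{G}^n-\widetilde{\bm{G}}^n),\bm{G}^{n+1}}_w$. My first attempt would be to note that $P_X$ contains $\mathrm{Range}(\bm{K}^{n+1})$, that $\bm{K}^{n+1}$ already carries the transport of $\bm{X}^n\bm{S}^n$, and similarly for $\bm{L}^{n+1}$, following the BUG exactness arguments of \cite{einkemmer2024asymptoticpreserving}.
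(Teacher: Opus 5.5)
Your proposal is essentially the paper's proof. The paper likewise rewrites the S-step in Galerkin form with $\mathcal{P}^X=\bm{X}^{n+1}(\bm{X}^{n+1})^\top$ and $\mathcal{P}^V=\bm{V}^{n+1}(\bm{V}^{n+1})^\top$, takes the explicit transport at $\widetilde{\bm{G}}^n$ with $\widetilde{\bm{G}}^n\bm{M}=\mathcal{P}^X\bm{G}^n\bm{M}\mathcal{P}^V$ (so your fallback is not needed), tests against $\bm{G}^{n+1}\bm{M}$ so the projections drop out, reruns the IMEX-S argument to get $E^{n+1}_\theta\le\widetilde{E}^n_\theta$, and closes with $\norm*{\widetilde{\bm{G}}^n}_w\le\norm*{\bm{G}^n}_w$.

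One caveat: you assert, without checking, that the macro equation holds with the low-rank $\bm{G}^{n+1}$, so that the $\mathcal{H}$--$\mathcal{J}$ coupling cancels exactly. As written, the Schur-complement solve produces $\bm{\rho}^{n+1}$ from the full-rank micro update, not the low-rank one, so this is an assumption about how the scheme couples $\bm{\rho}^{n+1}$ to the BUG step; the paper's proof relies on it in exactly the same implicit way.
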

\begin{proof}
    First, let us multiply the S-step with $\bm{X}^{n+1}$ from the left and $(\bm{V}^{n+1})^\top$ from the right, and define $\mathcal{P}^X = \bm{X}^{n+1} (\bm{X}^{n+1})^\top$, $\mathcal{P}^V = \bm{V}^{n+1} (\bm{V}^{n+1})^\top$.
    We have
    \begin{equation}\label{eq:proof-lowrank-0}
        \begin{aligned}
            \frac{\bm{G}^{n+1} - \widetilde{\bm{G}}^n}{\Delta t} \bm{M}
            ={} & - \frac{1}{\varepsilon} \mathcal{P}^X \mathcal{A}\left(\widetilde{\bm{G}}^n\right) \left(\mathbf{I} - \frac1{|D_{\bm{\Omega}}|} \bm{w} \bm{1}^\top \right)
            \bm{M} \mathcal{P}^V - \frac{1}{\varepsilon^2} \mathcal{P}^X \mathcal{J}(\bm{\rho}^{n+1}) \bm{M} \mathcal{P}^V
            \\
                & - \frac{1}{\varepsilon^2} \mathcal{P}^X \bm{\sigma}^s \bm{G}^{n+1}\bm{M} \mathcal{P}^V
            - \mathcal{P}^X \bm{\sigma}^a \bm{G}^{n+1} \bm{M}\mathcal{P}^V,
        \end{aligned}
    \end{equation}
    where
    \[
        \widetilde{\bm{G}}^n \bm{M} = \bm{X}^{n+1} \widetilde{\bm{S}}^n (\bm{V}^{n+1})^\top
        = \mathcal{P}^X \bm{G}^n \bm{M} \mathcal{P}^V.
    \]
    Multiplying \eqref{eq:proof-lowrank-0} from the right by $(\prod_{j=1}^d \Delta x^{(j)}) \bm{M}(\bm{G}^{n+1})^\top$ and then taking the trace, we obtain
    {\small\begin{equation}\label{eq:proof-lowrank-1}
        \begin{aligned}
            \frac{1}{\Delta t} \inner*{\bm{G}^{n+1} - \widetilde{\bm{G}}^n, \bm{G}^{n+1}}_w
            ={} & - \frac{1}{\varepsilon} \inner*{\mathcal{P}^X \mathcal{A}\left(\widetilde{\bm{G}}^n\right) \left(\mathbf{I} - \frac1{|D_{\bm{\Omega}}|} \bm{w} \bm{1}^\top \right), \bm{G}^{n+1} \bm{M} (\mathcal{P}^V)^\top \bm{M}^{-1}}_w
            \\
                & - \frac{1}{\varepsilon^2} \inner*{\mathcal{P}^X \mathcal{J}(\bm{\rho}^{n+1}), \bm{G}^{n+1} \bm{M} (\mathcal{P}^V)^\top \bm{M}^{-1}}_w
            \\
                & - \frac{1}{\varepsilon^2} \inner*{\mathcal{P}^X \bm{\sigma}^s \bm{G}^{n+1}, \bm{G}^{n+1} \bm{M} (\mathcal{P}^V)^\top\bm{M}^{-1} }_w
            \\
                & - \inner*{\mathcal{P}^X \bm{\sigma}^a \bm{G}^{n+1}, \bm{G}^{n+1} \bm{M} (\mathcal{P}^V)^\top \bm{M}^{-1}}_w.
        \end{aligned}
    \end{equation}}
    Using $\tr(AB)=\tr(BA)$ and
    \begin{equation*}
        \bm{G}^{n+1} = \mathcal{P}^X  \bm{G}^{n+1} = (\mathcal{P}^X )^\top \bm{G}^{n+1},
        \qquad
        \bm{G}^{n+1} \bm{M} = \bm{G}^{n+1} \bm{M} (\mathcal{P}^V )^\top  = \bm{G}^{n+1} \bm{M}\mathcal{P}^V,
    \end{equation*}
    we have
    \begin{align*}
        \inner*{\mathcal{P}^X \bm{Z}, \bm{G}^{n+1} \bm{M} (\mathcal{P}^V)^\top \bm{M}^{-1}}_w ={} & \tr\left(
        \mathcal{P}^X \bm{Z} \bm{M}^2 (\bm{G}^{n+1} \bm{M} (\mathcal{P}^V)^\top \bm{M}^{-1})^\top
        \right)                                                                                                                     \\
        ={}                                                                                       & \inner*{\bm{Z}, \bm{G}^{n+1}}_w
    \end{align*}
    for any matrix $\bm{Z}$.
    Thus, we reduce \eqref{eq:proof-lowrank-1} to
    \begin{align*}
        \frac{1}{\Delta t} \inner*{\bm{G}^{n+1} - \widetilde{\bm{G}}^n, \bm{G}^{n+1}}_w
        ={} & - \frac{1}{\varepsilon} \inner*{\mathcal{A}\left(\widetilde{\bm{G}}^n\right) \left(\mathbf{I} - \frac1{|D_{\bm{\Omega}}|} \bm{w} \bm{1}^\top \right), \bm{G}^{n+1}}_w
        \\
            & - \frac{1}{\varepsilon^2} \inner*{\mathcal{J}(\bm{\rho}^{n+1}), \bm{G}^{n+1}}_w
        \\
            & - \frac{1}{\varepsilon^2} \norm*{\sqrt{\bm{\sigma}^s} \bm{G}^{n+1}}_w
        - \norm*{\sqrt{\bm{\sigma}^a} \bm{G}^{n+1}}_w.
    \end{align*}
    The remainder of the proof parallels that of Theorem~\ref{thm:IMEX-S-theta-stability}.
    Under the CFL condition \eqref{eq:IMEX-S-theta-time-step}, we have the energy inequality $E_{\theta}^{n+1} \le \widetilde{E}_{\theta}^n$, where
    \[
        \widetilde{E}_{\theta}^n = |D_{\bm{\Omega}}| \norm*{\bm{\rho}^n}^2 + \varepsilon^2 \norm*{\widetilde{\bm{G}}^n}_w^2 + (1-\theta) \Delta t \sigma^s_0 \norm*{\widetilde{\bm{G}}^n}_w^2.
    \]
    By noting that
    \[
        \norm*{\widetilde{\bm{G}}^n}_w^2 = \norm*{\mathcal{P}^X \bm{G}^n \bm{M} \mathcal{P}^V \bm{M}^{-1}}_w^2 \le \norm*{\bm{G}^n}_w^2,
    \]
    it immediately follows that $\widetilde{E}_{\theta}^n \le E_{\theta}^n$. Combining these results yields $E_{\theta}^{n+1} \le E_{\theta}^n$, which completes the proof.
\end{proof}

\begin{remark}\label{rem:weighted-consistency}
    Unlike the $P_N$ method, the $S_N$ discretization defines a discrete energy functional through quadrature weights $\bm{M}^2$. A low-rank approximation $\bm{G} = \bm{X}\bm{S}\bm{V}^\top$ with orthonormal $\bm{V}$ typically fails to preserve this inherent stability.
    To ensure energy consistency, the low-rank ansatz must be aligned with the energy norm, either by (i) employing an energy-consistent factorization $\bm{G}\bm{M} = \bm{X}\bm{S}\bm{V}^\top$ with $\bm{V}^\top\bm{V} = \mathbf{I}_r$, or (ii) enforcing a weighted orthonormality condition $\bm{V}^\top\bm{M}^2\bm{V} = \mathbf{I}_r$ for the original representation.
    While some studies lean toward the latter \cite{einkemmer2021asymptoticpreserving}, stability analysis within the $S_N$ framework remains absent.
    In this work, we adopt the former approach—representing the factors of $\bm{G}\bm{M}$, which enables the seamless integration of robust low-rank integrators while rigorously maintaining the energy stability of the underlying full-rank scheme.
\end{remark}

\section{Numerical results} \label{sec:numerical-results}

In this section, we provide some numerical examples to validate our proposed method.
In the one-dimensional (1D) setting, the spatial domain is $[x_L, x_R]$ and the angular variable is $\mu \in [-1,1]$. In two dimensional (2D) case, the physical domain is a rectangular region $[x_L, x_R] \times [y_B, y_T]$, and the angular variable is discretized by directions $\bm{\Omega} \in \mathbb{S}^2$ projected onto the plane. Periodic boundary conditions are imposed in all numerical experiments.
Schemes such as  IMEX-S-aBUG  combine the IMEX-S time discretization with a BUG-type low-rank integrator. The AP modification in aBUG is activated when required in diffusive regimes with $\sigma^s_0>0$.
For both the one- and two-dimensional tests, the time step sizes are chosen as the maximal values permitted by the stability conditions \eqref{eq:IMEX-time-step} and \eqref{eq:IMEX-S-time-step}.
For the 1D tests, this yields
\begin{align*}
    \Delta t_{(E)} ={}  \frac{2}{3}\,\varepsilon\,\Delta x
    + \frac{1}{3}\,\sigma_0^s\,\Delta x^2, \quad \Delta t_{(I)} ={}
    \left(\frac{\varepsilon}{2\Delta x} - \frac{\sigma_0^s}{4}\right)^{-1}
    \frac{\varepsilon^2}{2},
    \quad
    \left(\frac{\varepsilon}{2\Delta x} > \frac{\sigma_0^s}{4}\right).
\end{align*}
For the 2D tests, the same stability conditions lead to
\begin{align*}
    \Delta t_{(E)} ={}  \frac{1}{3}\,\varepsilon\,\Delta s
    + \frac{1}{12}\,\sigma_0^s\,\Delta s^2, \quad \Delta t_{(I)} ={}
    \left(\frac{\varepsilon}{\Delta s} - \frac{\sigma_0^s}{4}\right)^{-1}
    \frac{\varepsilon^2}{2},
    \quad
    \left(\frac{\varepsilon}{\Delta s} > \frac{\sigma_0^s}{4}\right),
\end{align*}
where $\Delta s = \min\{\Delta x, \Delta y\}$.
Unless otherwise stated, the IMEX, IMEX-BUG, and IMEX-aBUG schemes employ $\Delta t = \Delta t_{(E)}$, while the IMEX-S, IMEX-S-BUG, and IMEX-S-aBUG schemes use $\Delta t = \Delta t_{(I)}$.
In cases where the IMEX-S scheme is unconditionally stable, we use an enlarged time step $\Delta t = 10\,\Delta t_{(E)}$ where $10$ is chosen arbitrarily to illustrate the efficiency of our IMEX-S scheme.
We set the tolerance $\tau = 10^{-5}$ in IMEX-aBUG and IMEX-S-aBUG methods.
All timing results reported in this section are obtained on an AMD Ryzen~9~7940H CPU running at 5.25~GHz, and each measurement is averaged over five independent runs.

\subsection{Gaussian initial data in one dimension}

We investigate the radiation transport equation in slab geometry in the spatial domain $D_x = [-1.5, 1.5]$
using the Gaussian initial condition
\[
    \left.f(t,x,v)\right|_{t=0} = \frac{1}{\sqrt{2\pi} \Sigma} \exp\left(-\frac{x^2}{2 \Sigma^2}\right),
    \quad \Sigma^2 = 9 \times 10^{-4}.
\]
We set $\sigma^s = 1$ and $\sigma^a = \Phi = 0$.
We fix $(N_{\bm{\Omega}}, N_x) = (200, 500)$ for all regimes.
Figure~\ref{fig:Gaussian_combined} displays the results for the kinetic ($\varepsilon=1, T=1, r=50$), intermediate ($\varepsilon=10^{-2}, T=0.2, r=10$), and diffusive ($\varepsilon=10^{-6}, T=0.2, r=3$) regimes, where $r$ denotes the fixed rank in BUG-type methods.
For $\varepsilon=1$, the numerical profiles match the trend of the analytical solution \cite{ganapol2008analytical}, with discrepancies attributed to the low-order nature of the schemes.
As $\varepsilon$ decreases, the methods consistently capture the transitional dynamics with a reduced rank $r$.
In the diffusive limit ($\varepsilon=10^{-6}$), the accurate reproduction of the limiting diffusion solution—even with $\Delta t = 10 \Delta t_{(E)}$ for implicit methods—confirms the AP property. Throughout all cases,
energy stability is guaranteed. $\langle g\rangle_{\bm \Omega}=0$ is preserved at machine precision and we omit the results to save space.

\begin{figure}[htbp]
    \centering
    \begin{subfigure}[c]{0.3\textwidth}
        \centering
        \includegraphics[width=\textwidth]{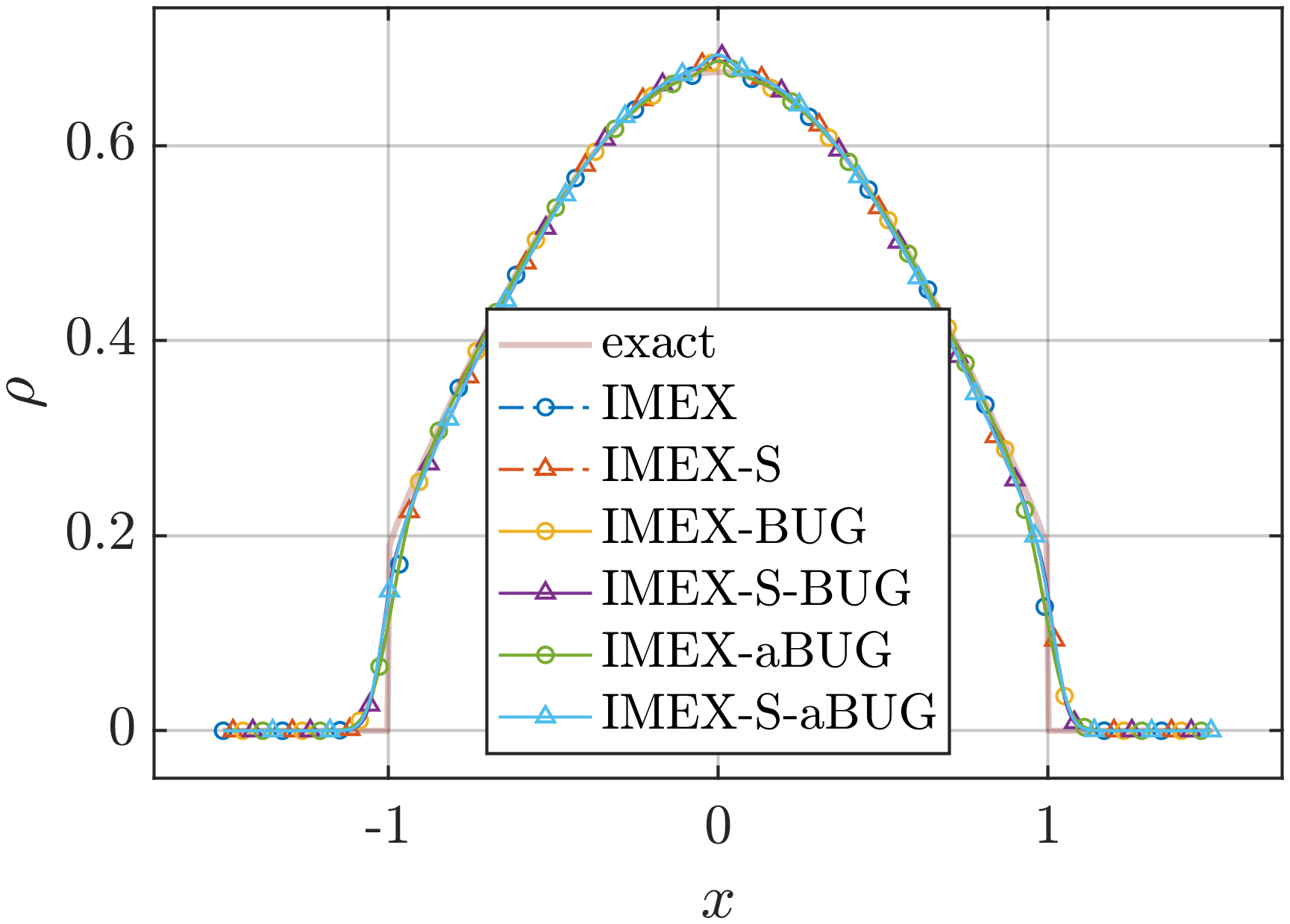}
    \end{subfigure}
    \begin{subfigure}[c]{0.3\textwidth}
        \centering
        \includegraphics[width=\textwidth]{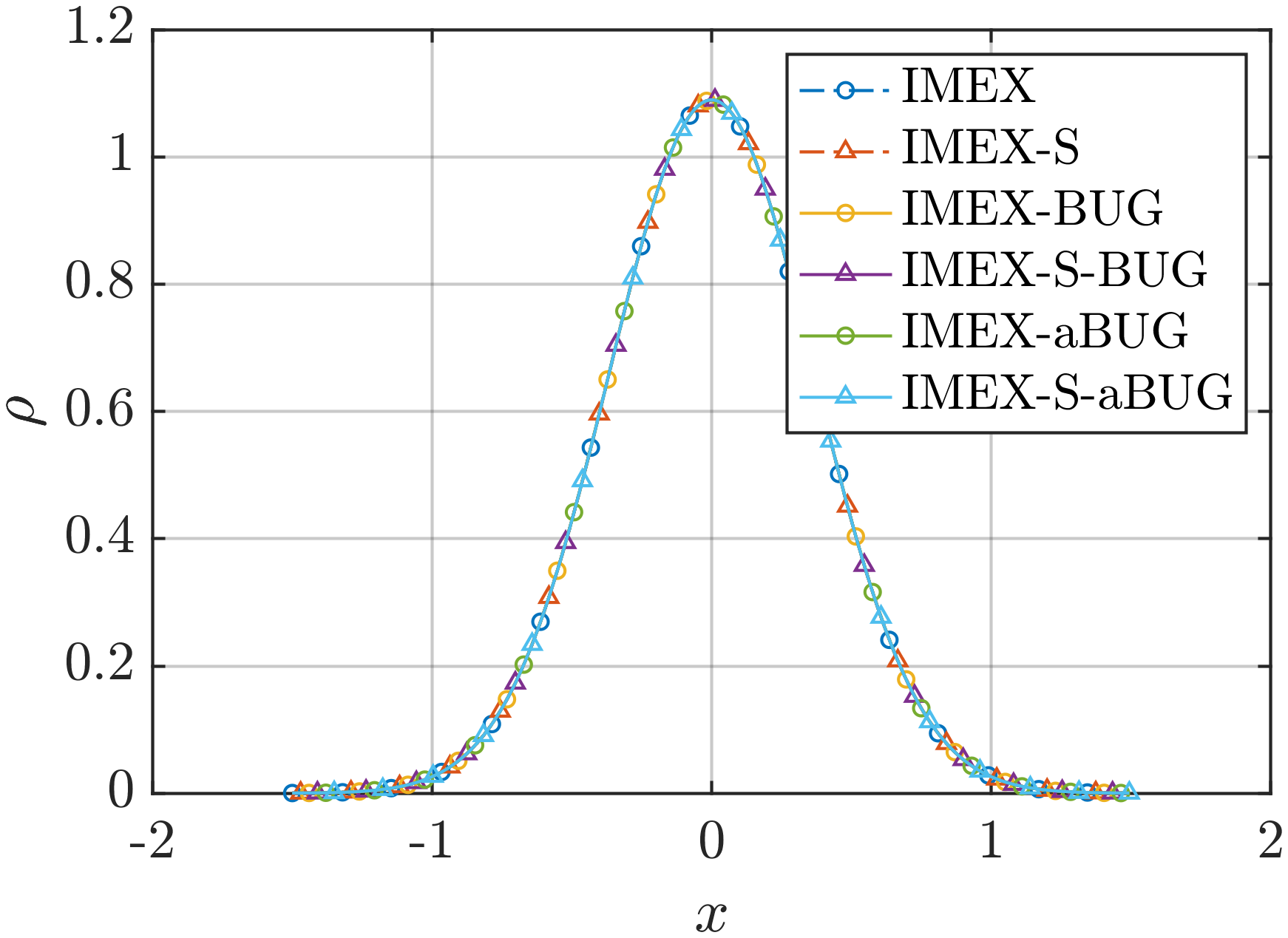}
    \end{subfigure}
    \begin{subfigure}[c]{0.3\textwidth}
        \centering
        \includegraphics[width=\textwidth]{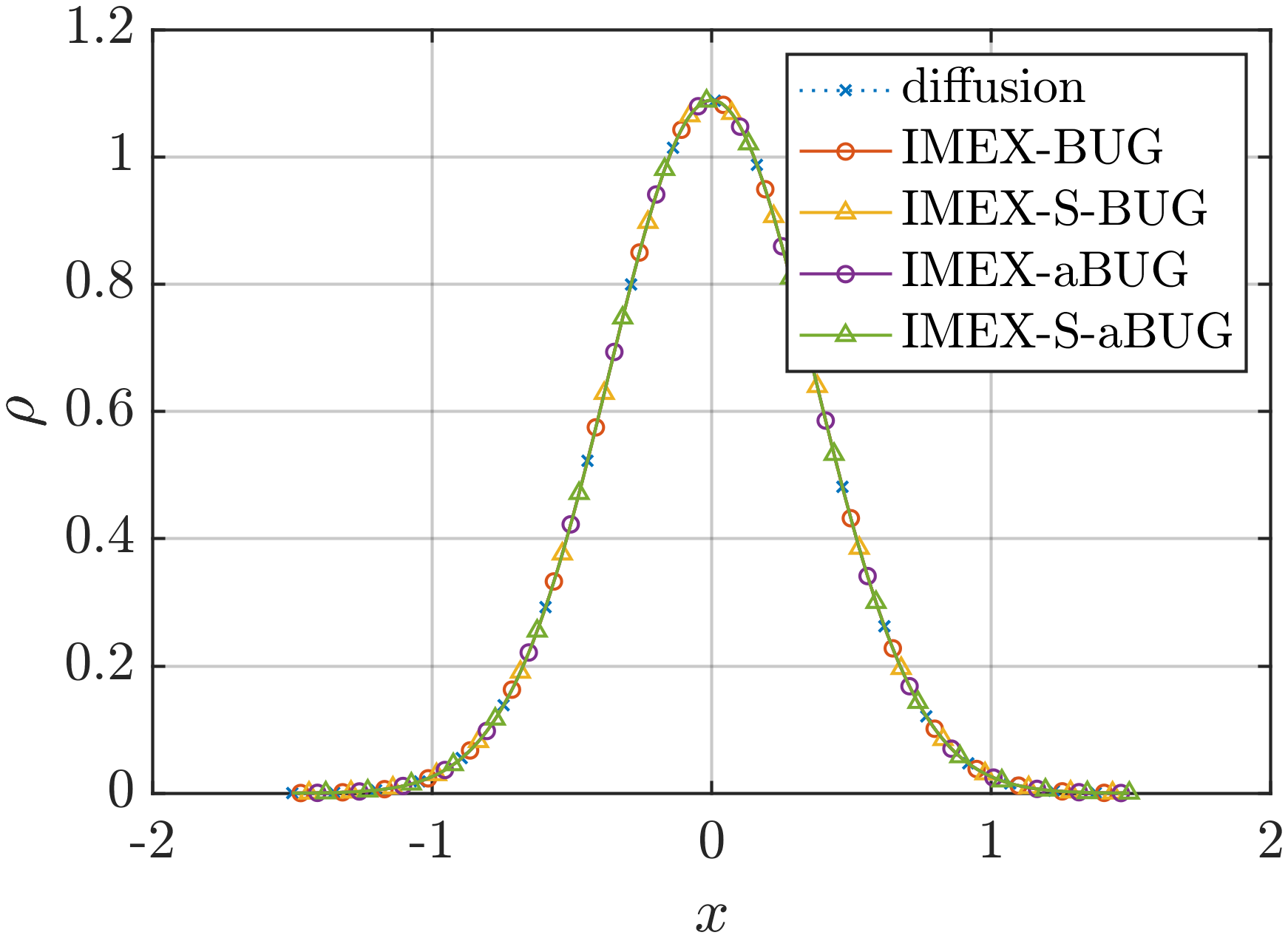}
    \end{subfigure}

    \begin{subfigure}[c]{0.3\textwidth}
        \centering
        \includegraphics[width=\textwidth]{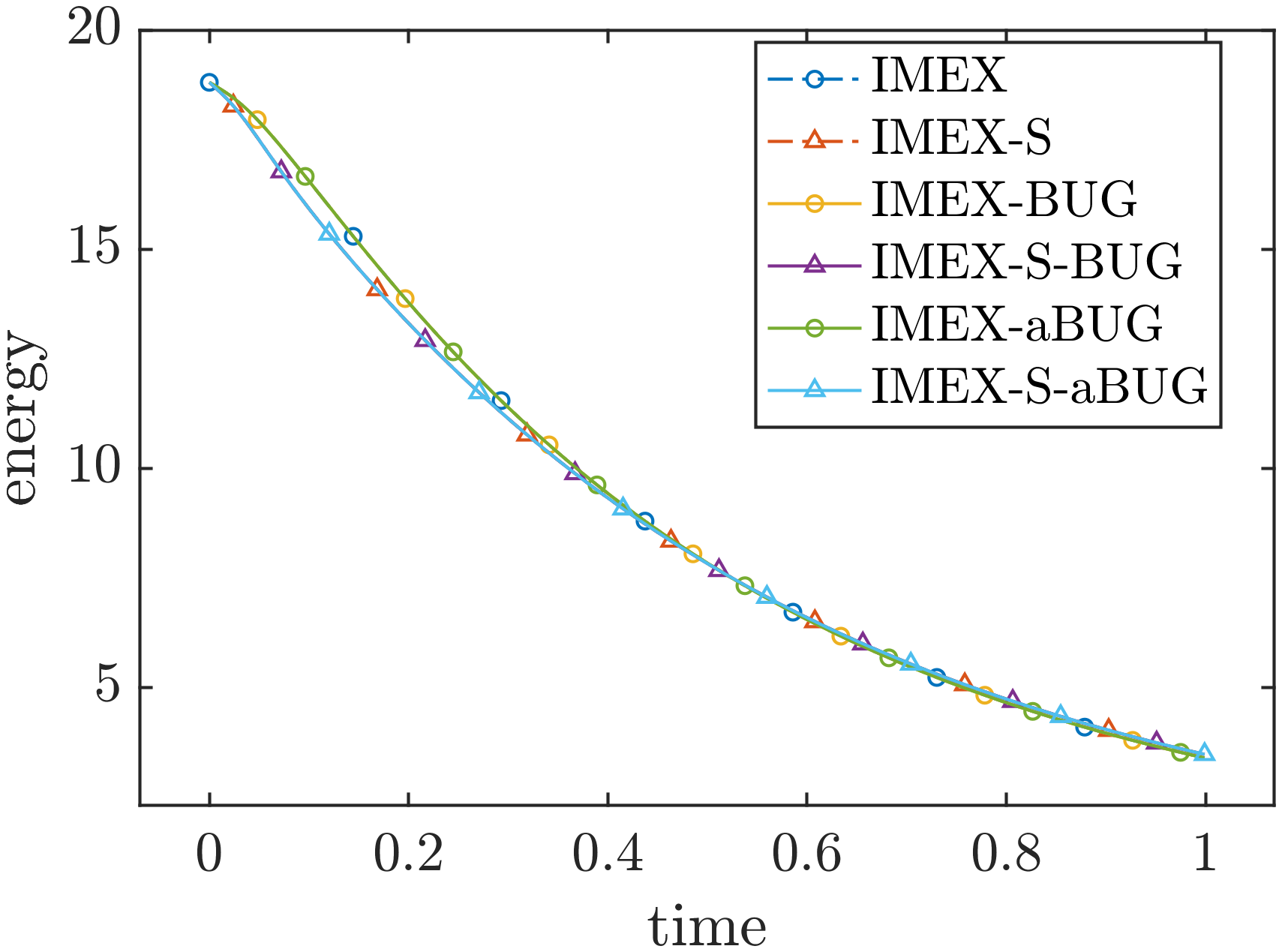}
    \end{subfigure}
    \begin{subfigure}[c]{0.3\textwidth}
        \centering
        \includegraphics[width=\textwidth]{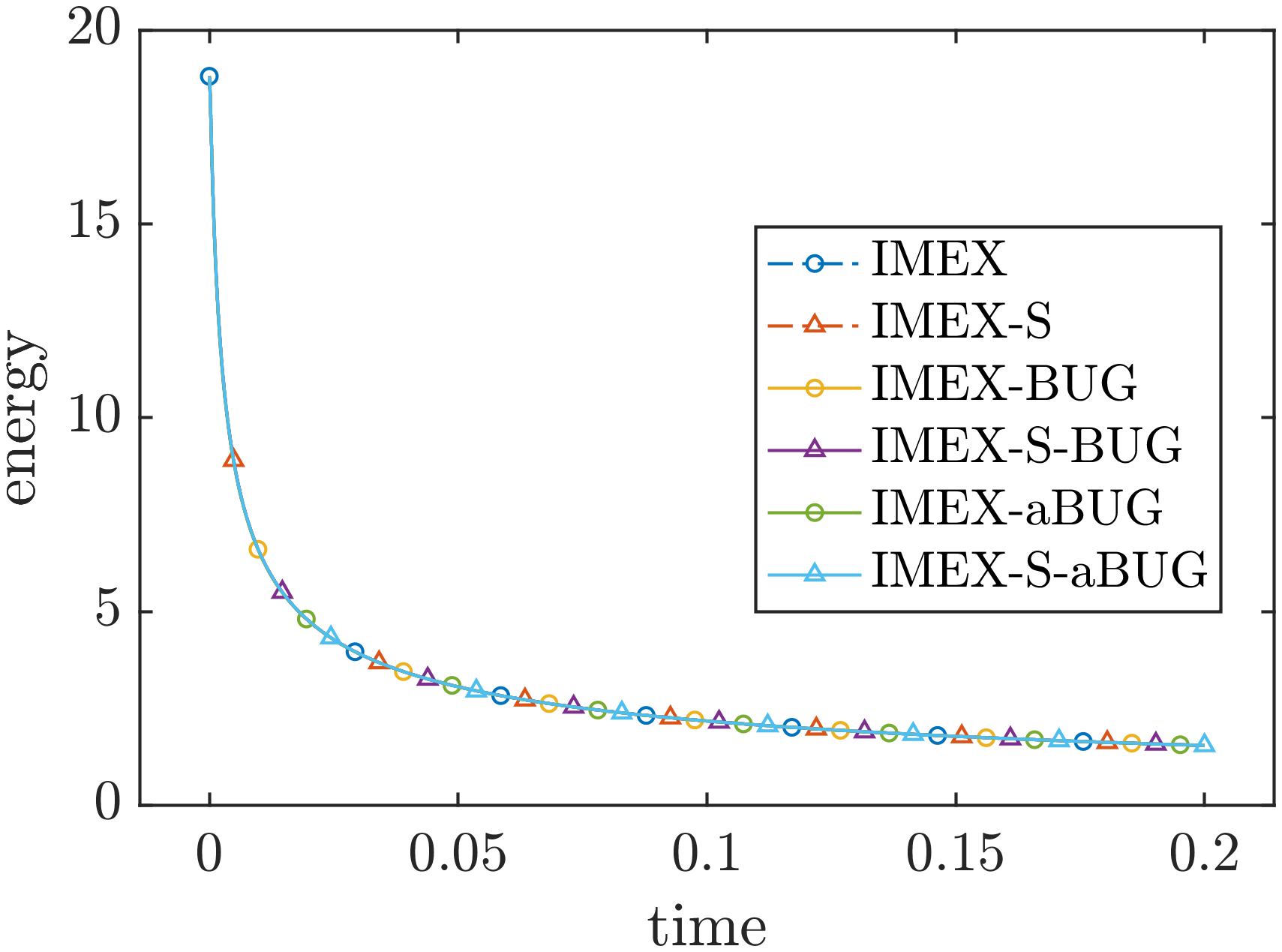}
    \end{subfigure}
    \begin{subfigure}[c]{0.3\textwidth}
        \centering
        \includegraphics[width=\textwidth]{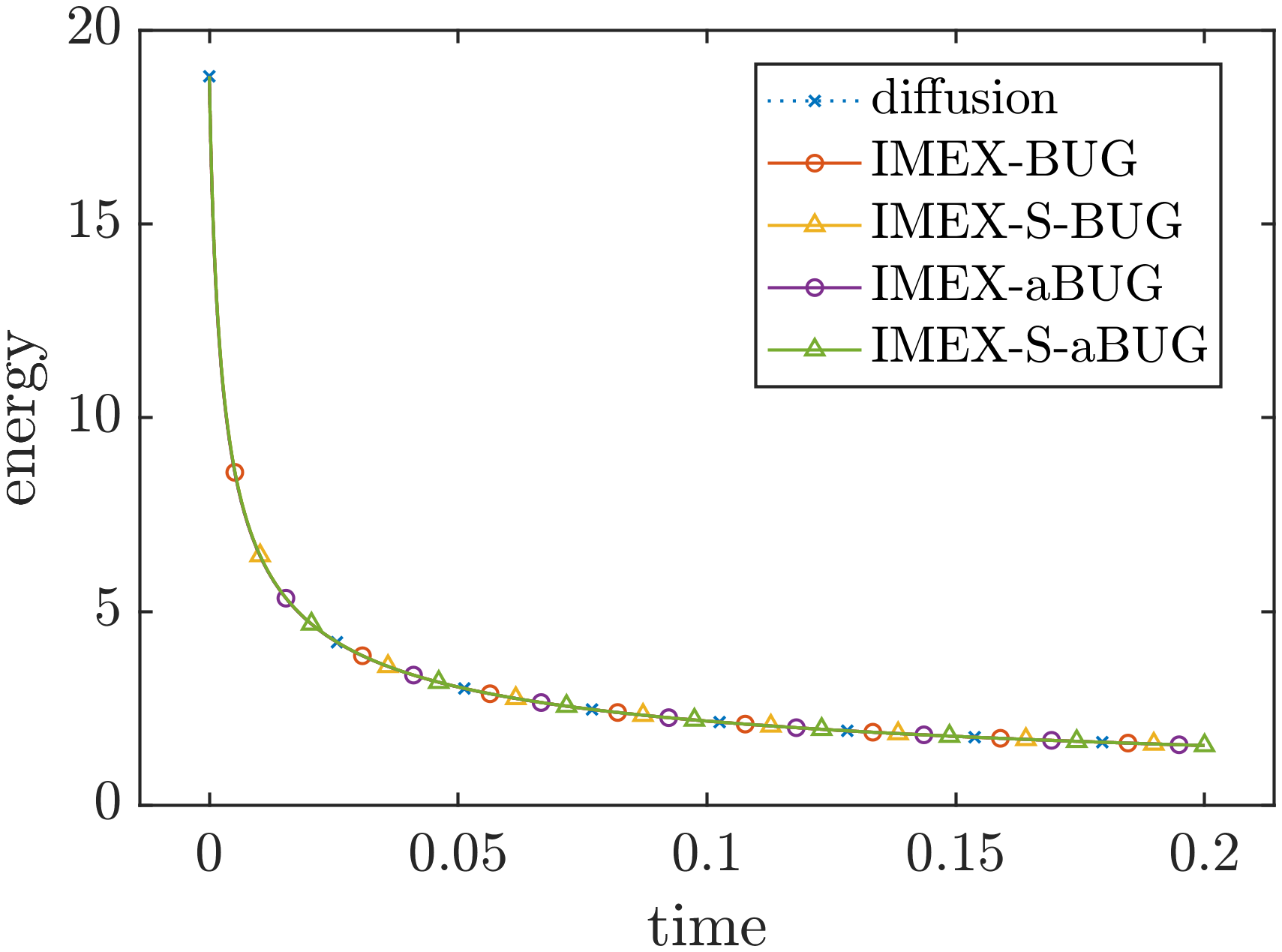}
    \end{subfigure}

    \begin{subfigure}[c]{0.3\textwidth}
        \centering
        \includegraphics[width=\textwidth]{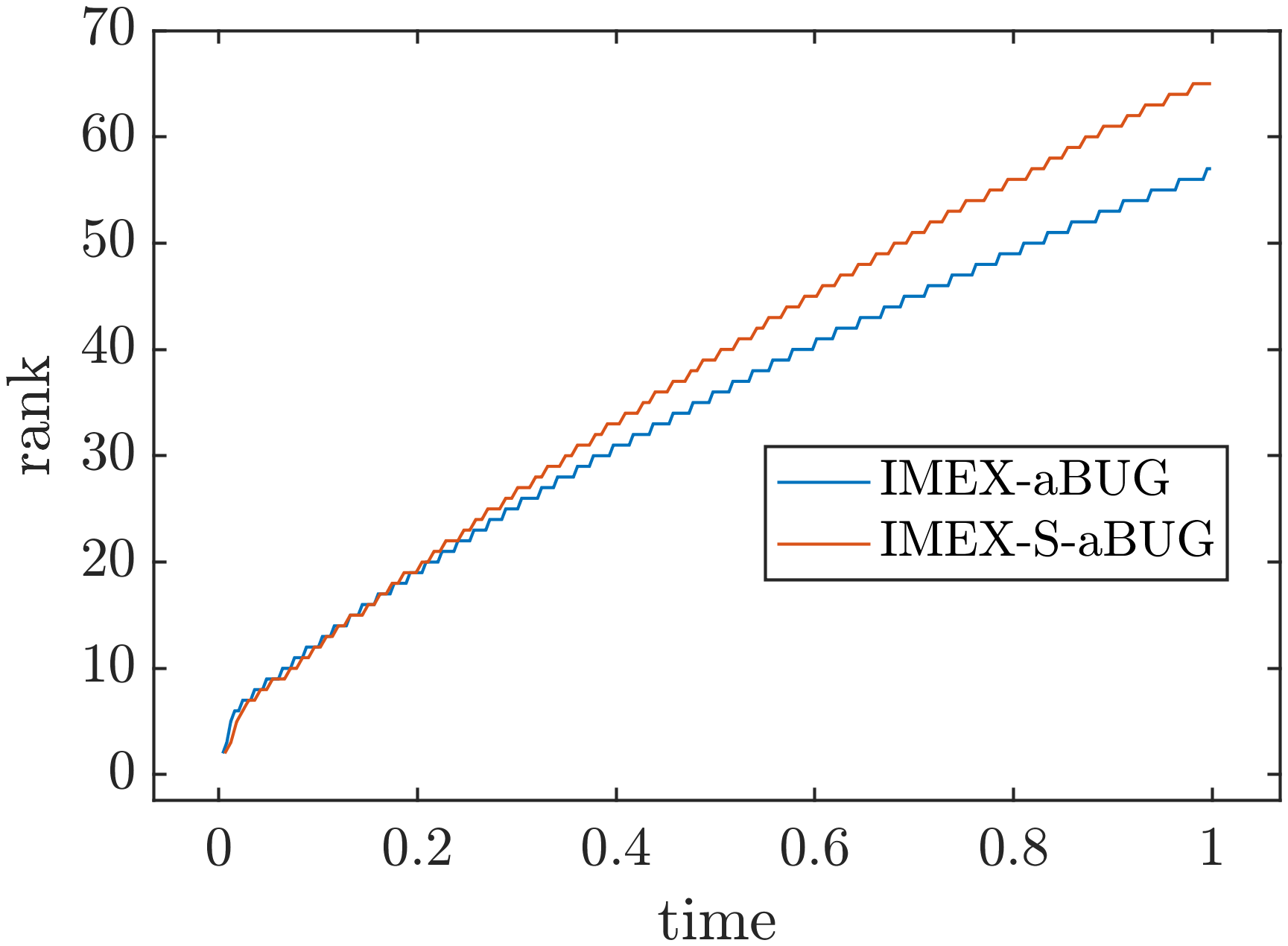}
    \end{subfigure}
    \begin{subfigure}[c]{0.3\textwidth}
        \centering
        \includegraphics[width=\textwidth]{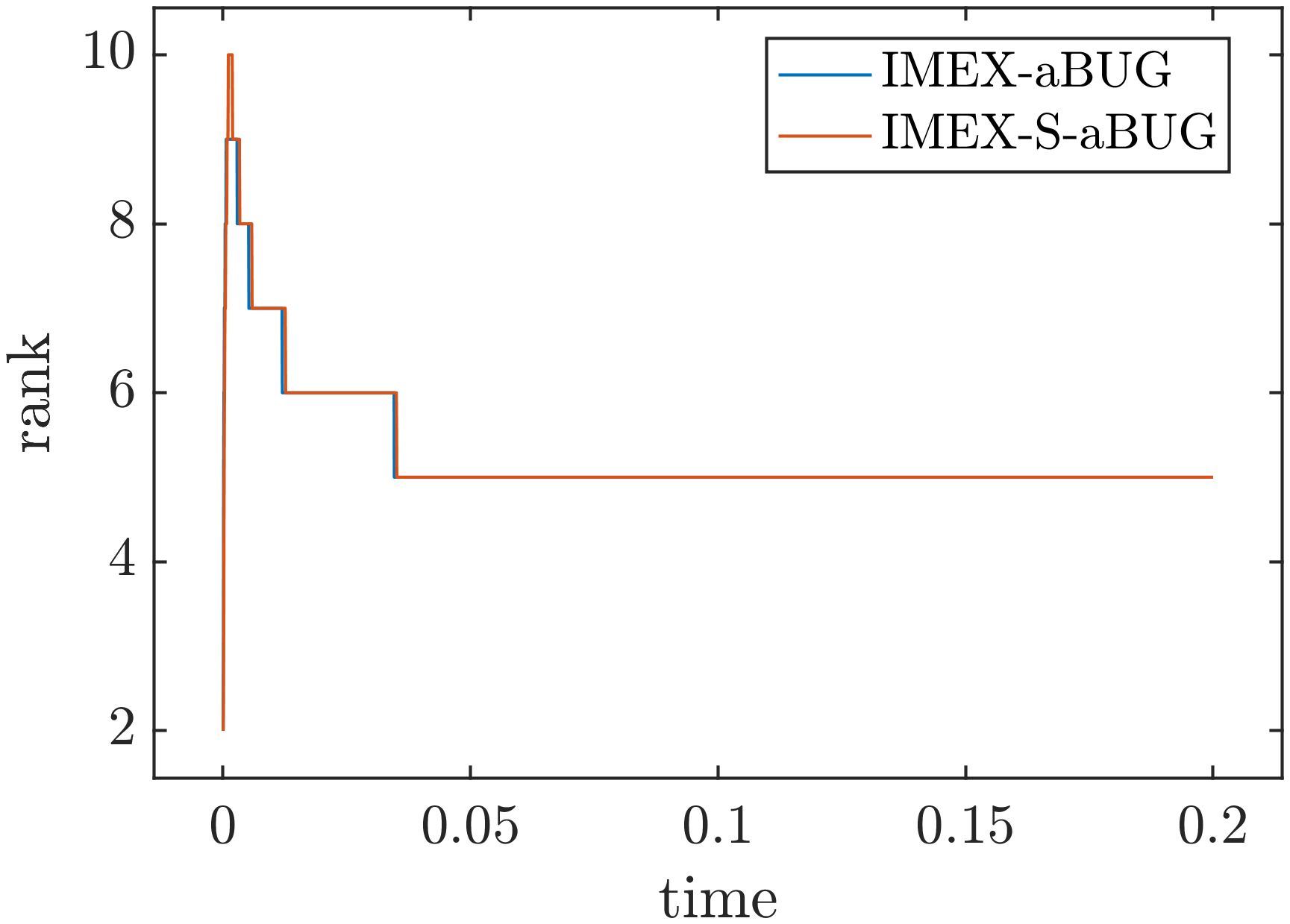}
    \end{subfigure}
    \begin{subfigure}[c]{0.3\textwidth}
        \centering
        \includegraphics[width=\textwidth]{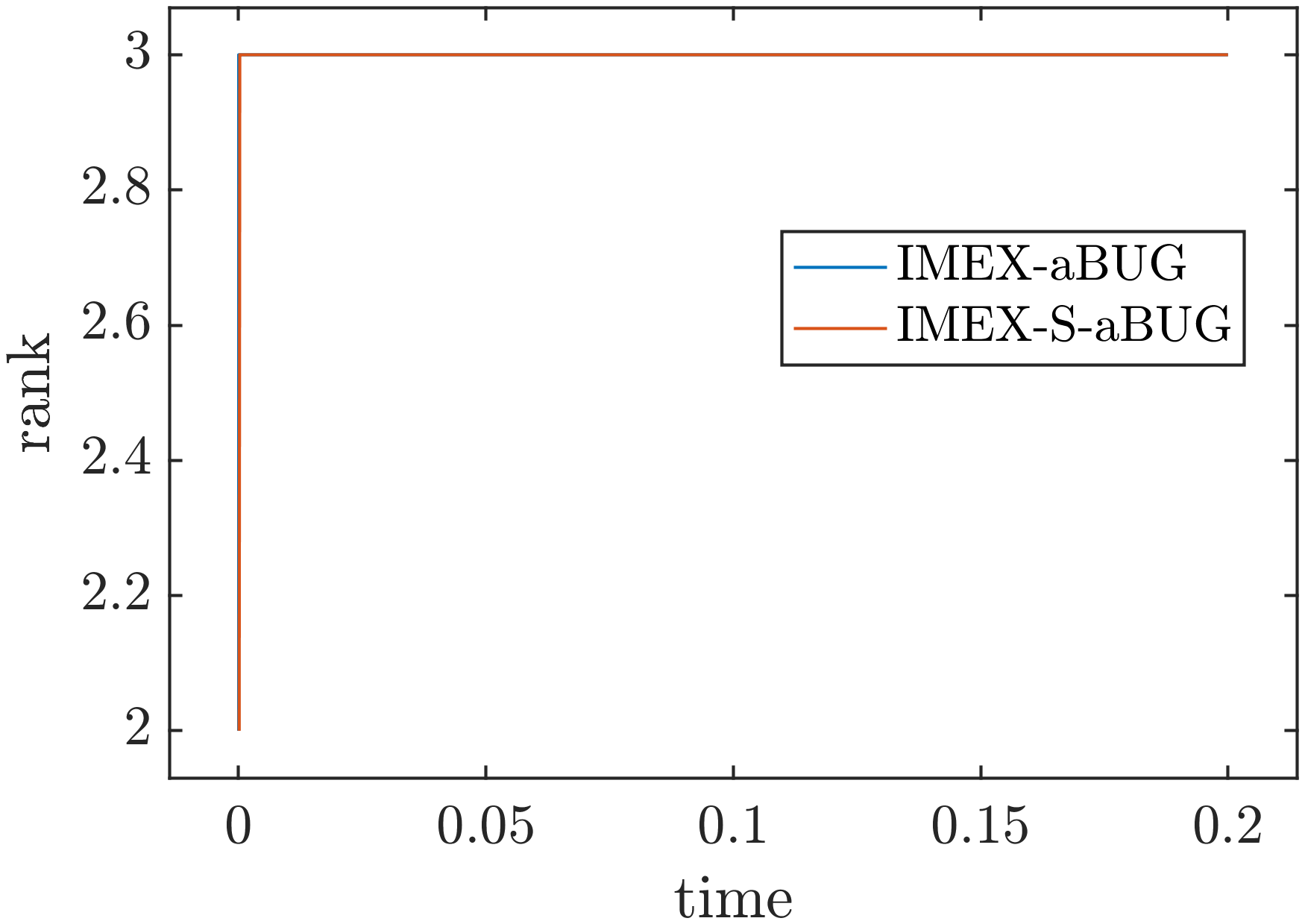}
    \end{subfigure}

    \caption{Gaussian initial data in one dimension.
        Columns from left to right: $(\varepsilon, T) = (1, 1.0)$, $(10^{-2}, 0.2)$, and $(10^{-6}, 0.2)$.
        Rows from top to bottom: density profile, energy evolution, and rank of the microscopic component $\bm{g}$.
    } \label{fig:Gaussian_combined}
\end{figure}

To verify the importance of energy-consistent low-rank factorization presented in Remark~\ref{rem:weighted-consistency}, we initialize the system with a non-equilibrium state:
\[
    \left.f(t,x,v)\right|_{t=0} = \frac{1}{2\pi \Sigma^2} \exp\left(-\frac{x^2}{2 \Sigma^2}\right)
    \left[
        \exp\left(-\frac{(v-1)^2}{2\Sigma^2}\right) + \exp\left(-\frac{(v+1)^2}{2\Sigma^2}\right)
    \right],
\]
where $\Sigma^2 = 10^{-4}$. We set $(N_{\bm{\Omega}}, N_x) = (50, 50)$, $\varepsilon=1$, and $r=2$.
As shown in Figure~\ref{fig:energy_comparison}, the unweighted IMEX-BUG scheme, which only enforces $\mathbf{V}^\top\mathbf{V} = \mathbf{I}_r$, exhibits non-physical growth during the initial transient due to its misalignment with the quadrature weights. In contrast, the weight-consistent IMEX-BUG scheme maintains monotonicity, as the implicit discretization introduces sufficient numerical dissipation to suppress the instability.

\begin{figure}[htbp]
    \centering
    \begin{subfigure}[c]{0.4\textwidth}
        \centering
        \includegraphics[width=\textwidth]{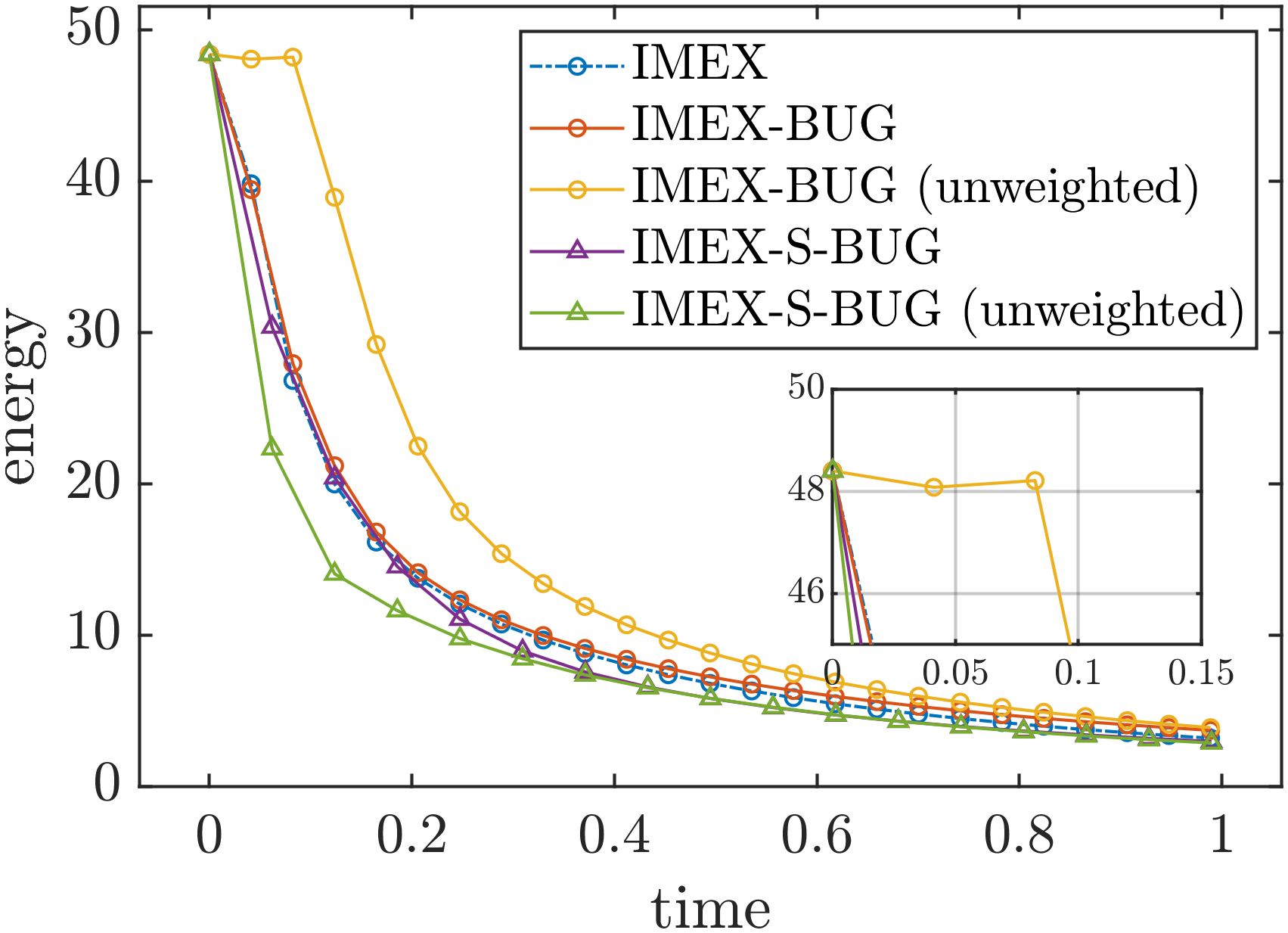}
    \end{subfigure}
    \caption{
    Energy evolution for various low-rank methods.
    The inset reveals that the unweighted IMEX-BUG scheme violates the energy dissipation principle via a non-physical growth.}
    \label{fig:energy_comparison}
\end{figure}

\subsection{Accuracy and computational cost in two dimensions}

We verify the order and the computational cost of the low-rank methods using a low-rank manufactured solution of the form
\[
    f(t,x,y,\theta,\mu) = 2 + e^{-t} \sin(2\pi x) \sin(2\pi y) + \varepsilon\, e^{-t} \sin(2\pi x) \sin(2\pi y) \sin(\theta) \sqrt{1-\mu^2},
\]
where $(x,y) \in [0,1]^2$.
The solution defines a low-rank source $\Phi(t, x, y, \theta, \mu)$ for the linear kinetic transport equation, which can be obtained from a direct calculation.
For the material parameters, we used $\sigma^s(\bm{x}) = 1$ and $\sigma^a(\bm{x}) = 0$.
Simulations are performed to $T=0.1$ with $\Delta t = \Delta t_{(E)}$ for all methods.
We set $r=4$ for BUG and $\tau=10^{-8}$ for aBUG methods. The mesh is refined by doubling $N$ from $16$ to $256$, paired with an $S_{N/8}$ angular discretization.
Figure~\ref{fig:ordertest_2d_combined} presents the $L^2$ error and wall time per step.
As $\varepsilon$ decreases from 1 to $10^{-6}$, the convergence order of $\rho$ improves from first to second order.
Furthermore, the per-step cost of the low-rank schemes scales as $\mathcal{O}(N^2)$, consistent with the theoretical complexity $\mathcal{O}(N^{\max(d_x,d_v)})$ for $r \ll N$, providing a significant reduction over the full-rank cost $\mathcal{O}(N^{d_x+d_v})$.

\begin{figure}[htbp]
    \centering
    \begin{subfigure}[c]{0.3\textwidth}
        \centering
        \includegraphics[width=\textwidth]{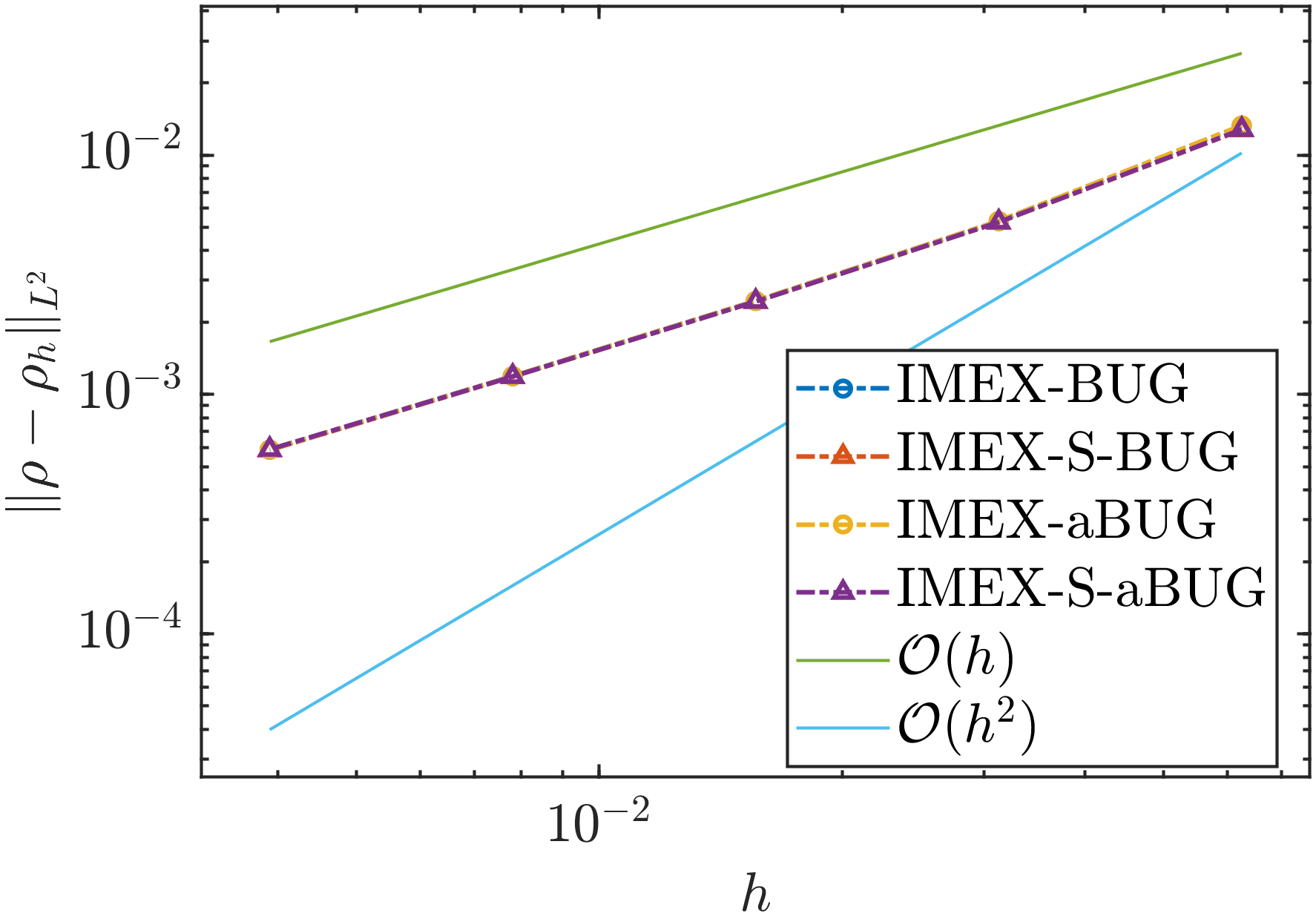}
    \end{subfigure}
    \begin{subfigure}[c]{0.3\textwidth}
        \centering
        \includegraphics[width=\textwidth]{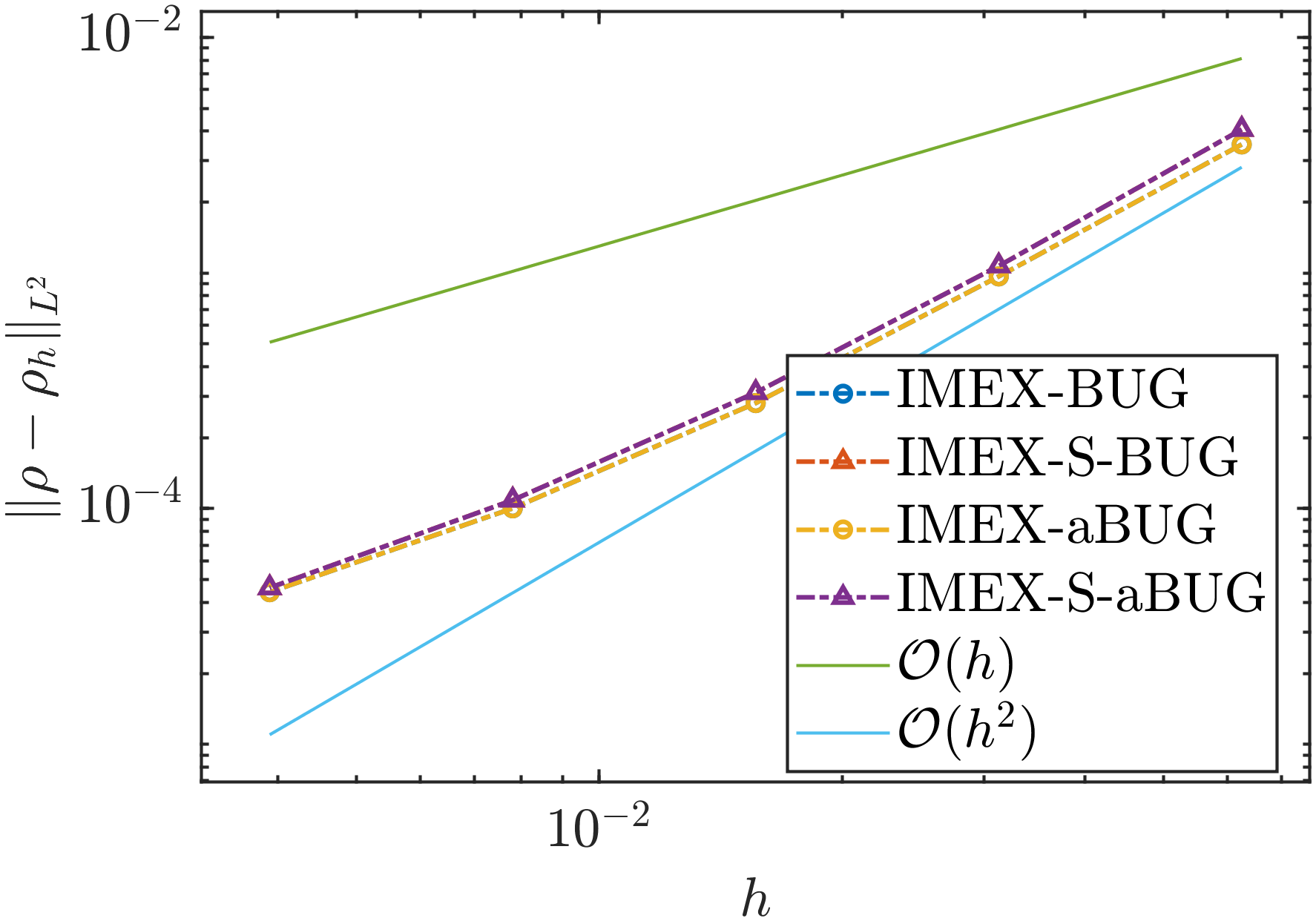}
    \end{subfigure}
    \begin{subfigure}[c]{0.3\textwidth}
        \centering
        \includegraphics[width=\textwidth]{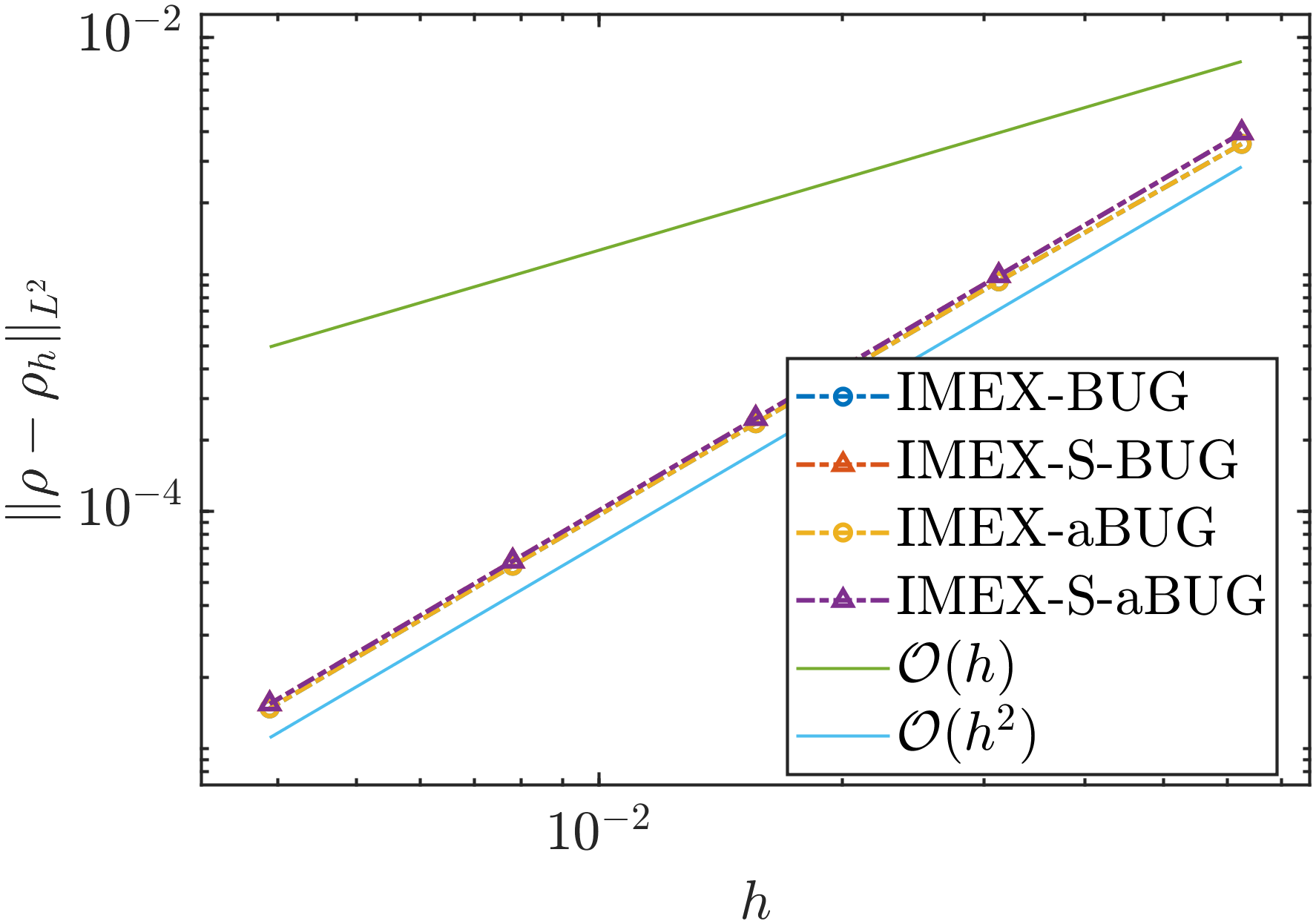}
    \end{subfigure}

    \begin{subfigure}[c]{0.3\textwidth}
        \centering
        \includegraphics[width=\textwidth]{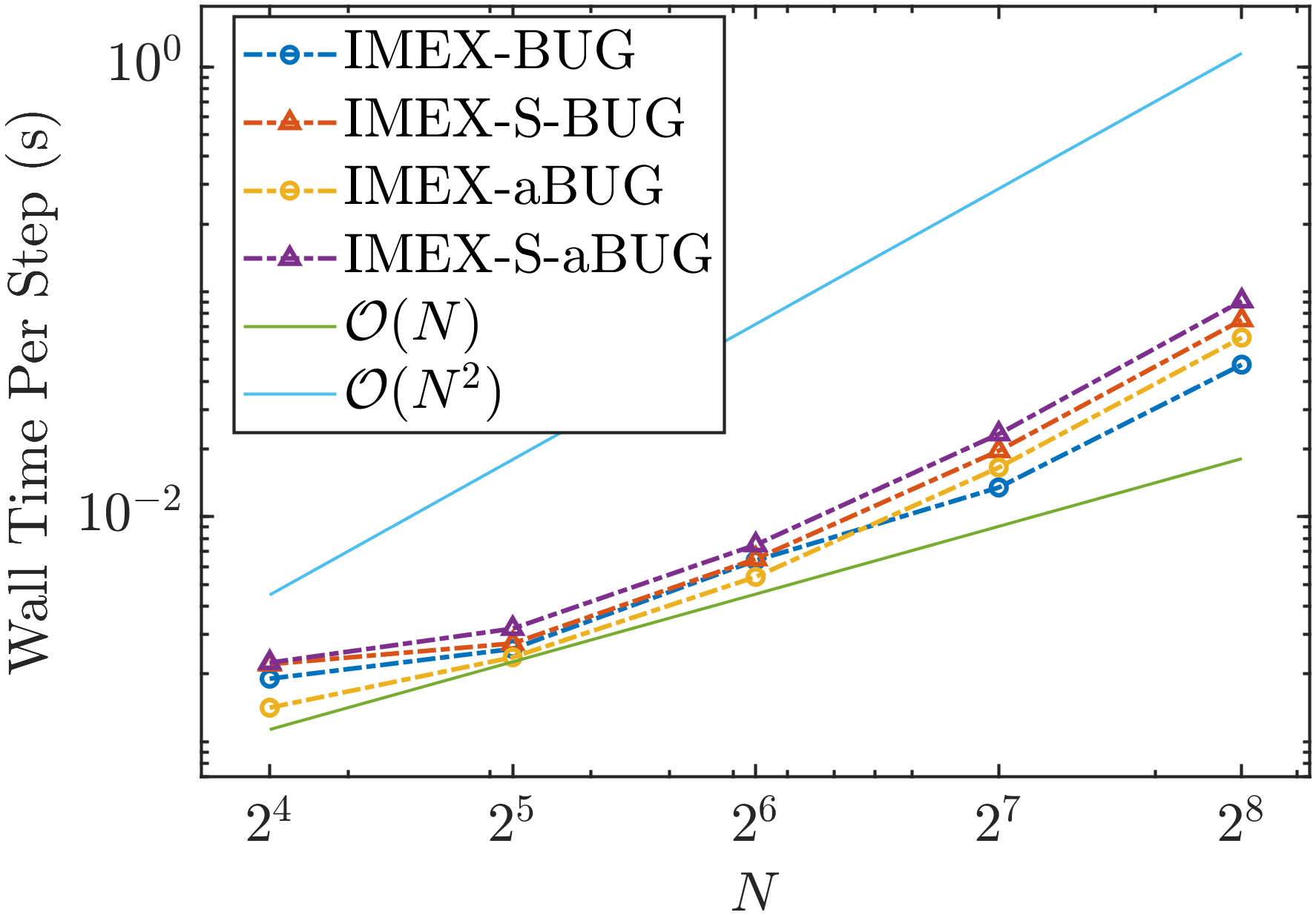}
    \end{subfigure}
    \begin{subfigure}[c]{0.3\textwidth}
        \centering
        \includegraphics[width=\textwidth]{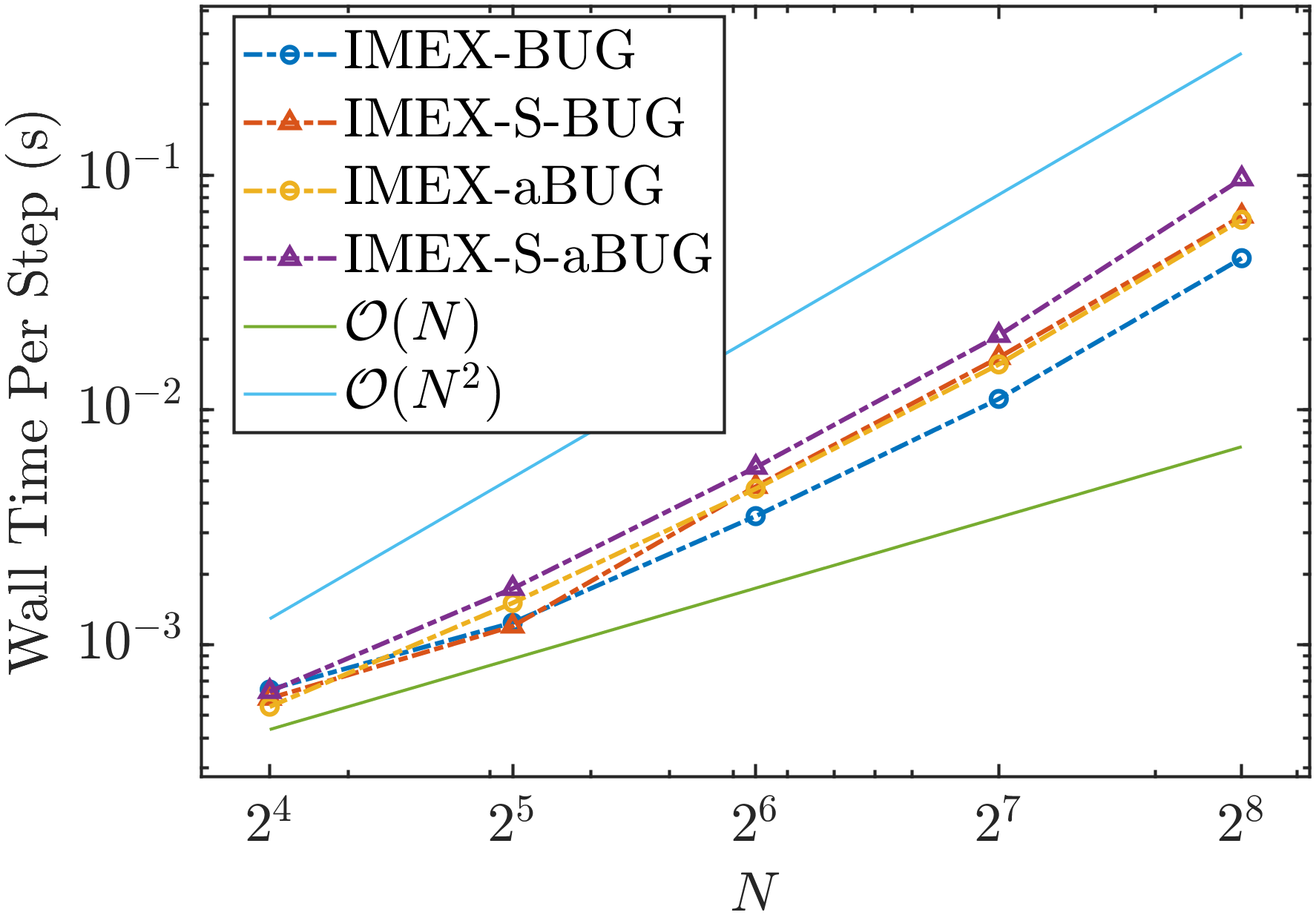}
    \end{subfigure}
    \begin{subfigure}[c]{0.3\textwidth}
        \centering
        \includegraphics[width=\textwidth]{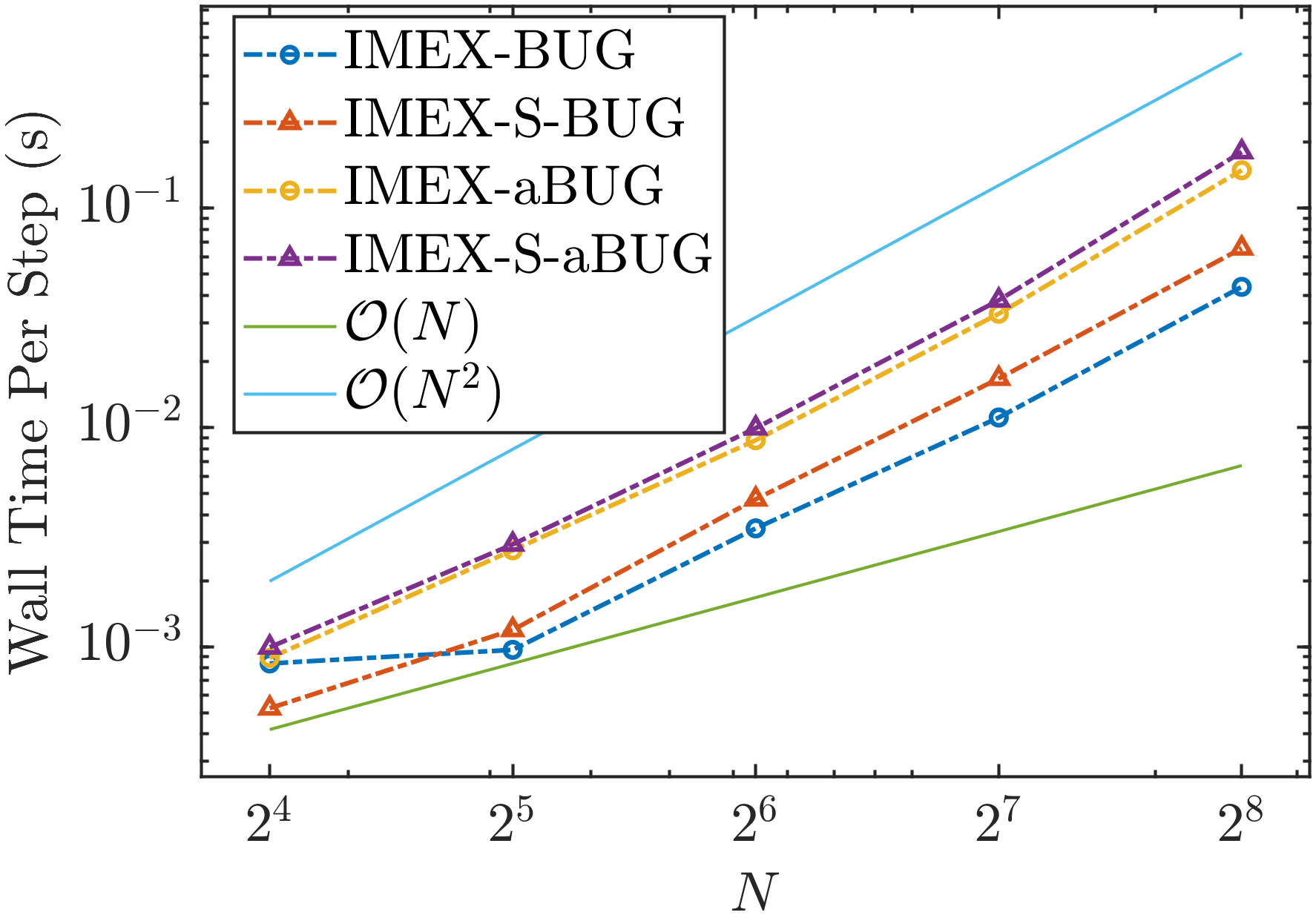}
    \end{subfigure}

    \caption{$L^2$ error (top) and average wall time per step (bottom) for $\varepsilon = 1, 10^{-2}, 10^{-6}$ (left to right).}
    \label{fig:ordertest_2d_combined}
\end{figure}

\subsection{Gaussian initial data in two dimensions}

In this test case, we consider a smooth Gaussian initial condition in $[-1,1]^2$
\[
    f(t,x,y,\theta,\mu) = \frac{1}{4\pi \Sigma^2} \exp\left(- \frac{x^2+y^2}{4\Sigma^2}\right), \quad \Sigma^2 = 10^{-2},
\]
with zero absorption coefficient and source term $\sigma^a = \Phi = 0$.
We use a $128 \times 128$ mesh, $S_{32}$ quadrature, and $r=10$ to simulate the diffusive regime ($\varepsilon = 10^{-6}$) up to $T=0.1$.
As shown by the density slice in Figure~\ref{fig:Gaussian_2d_more}, all schemes match the limiting diffusion solution, confirming the AP property.
We use the time-step size $\Delta t = 0.75 \Delta x^2$ for the diffusion limit and set $\Delta t_{(I)} = 10 \Delta t_{(E)}$.
The computational performance in Table~\ref{tab:Gaussian_2d_timecost} highlights the efficiency of the IMEX-S-(a)BUG schemes: their unconditional stability allows for large time steps, significantly reducing the total wall time compared to IMEX-(a)BUG schemes.

\begin{table}[htbp]
    \centering
    \caption{Gaussian initial data in two dimensions ($\varepsilon = 10^{-6}$). Wall time cost.} \label{tab:Gaussian_2d_timecost}
    \begin{tabular}{ccccc}
        \toprule
        Method      & Total (s) & Per step (s) & $\Delta t$ \\
        \midrule
        IMEX-BUG    & 135.20    & 2.75e-02     & 2.04e-05   \\
        IMEX-S-BUG  & 16.83     & 3.42e-02     & 2.04e-04   \\
        IMEX-aBUG   & 252.79    & 5.14e-02     & 2.04e-05   \\
        IMEX-S-aBUG & 27.64     & 5.62e-02     & 2.04e-04   \\
        \bottomrule
    \end{tabular}
\end{table}

\begin{figure}[htbp]
    \centering
    \begin{subfigure}[c]{0.3\textwidth}
        \centering
        \includegraphics[width=\textwidth]{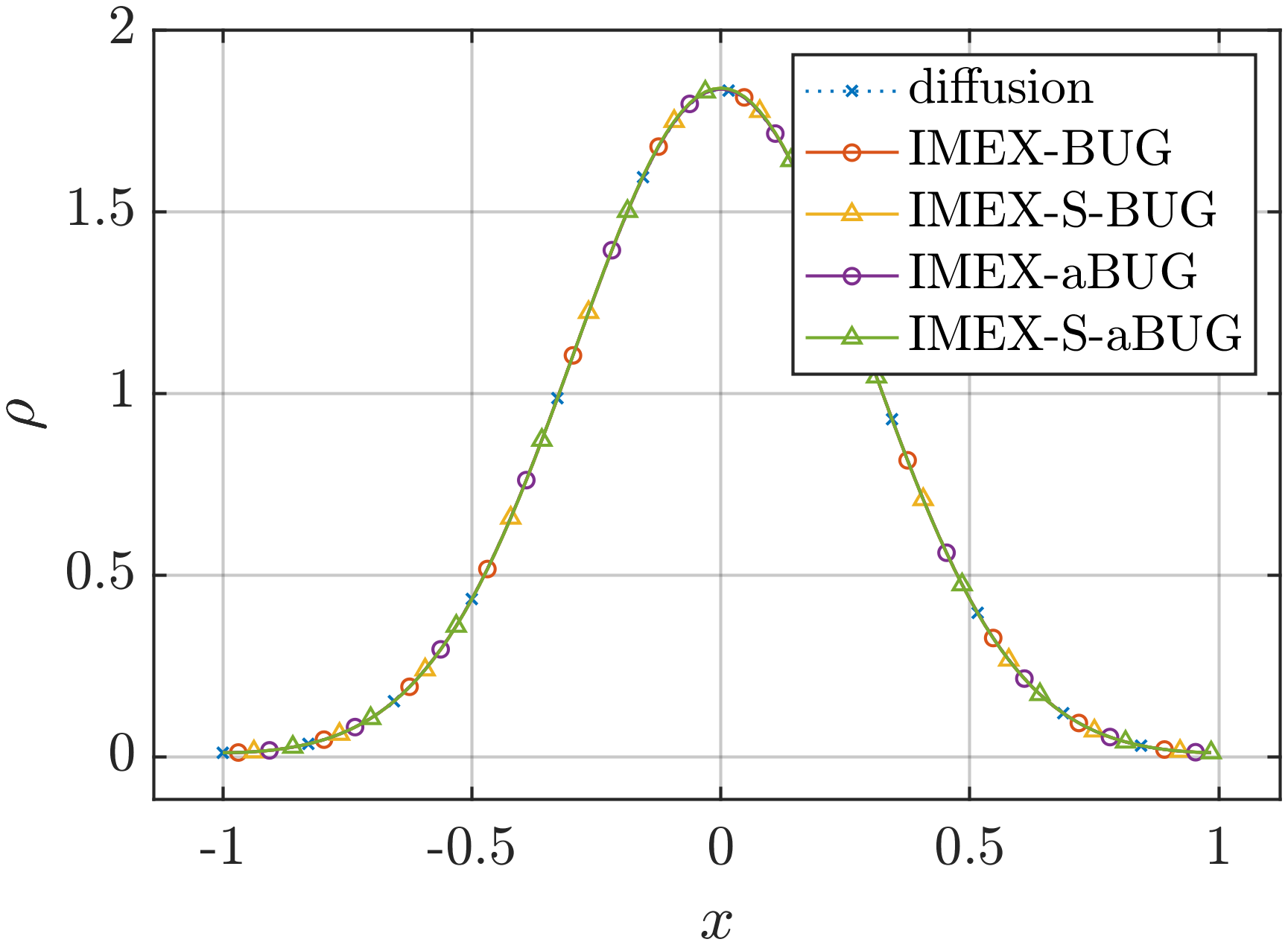}
    \end{subfigure}
    \begin{subfigure}[c]{0.3\textwidth}
        \centering
        \includegraphics[width=\textwidth]{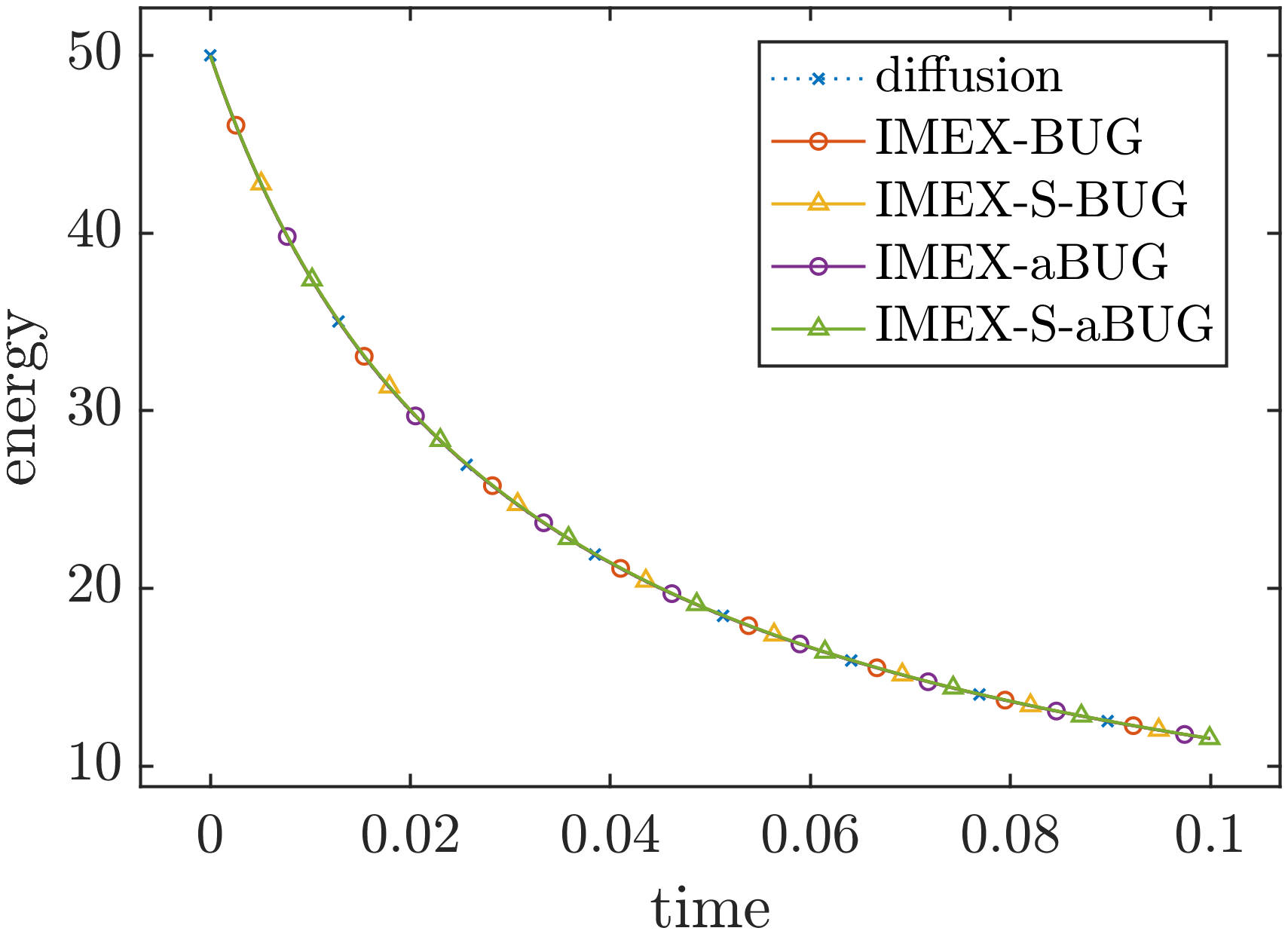}
    \end{subfigure}
    \begin{subfigure}[c]{0.3\textwidth}
        \centering
        \includegraphics[width=\textwidth]{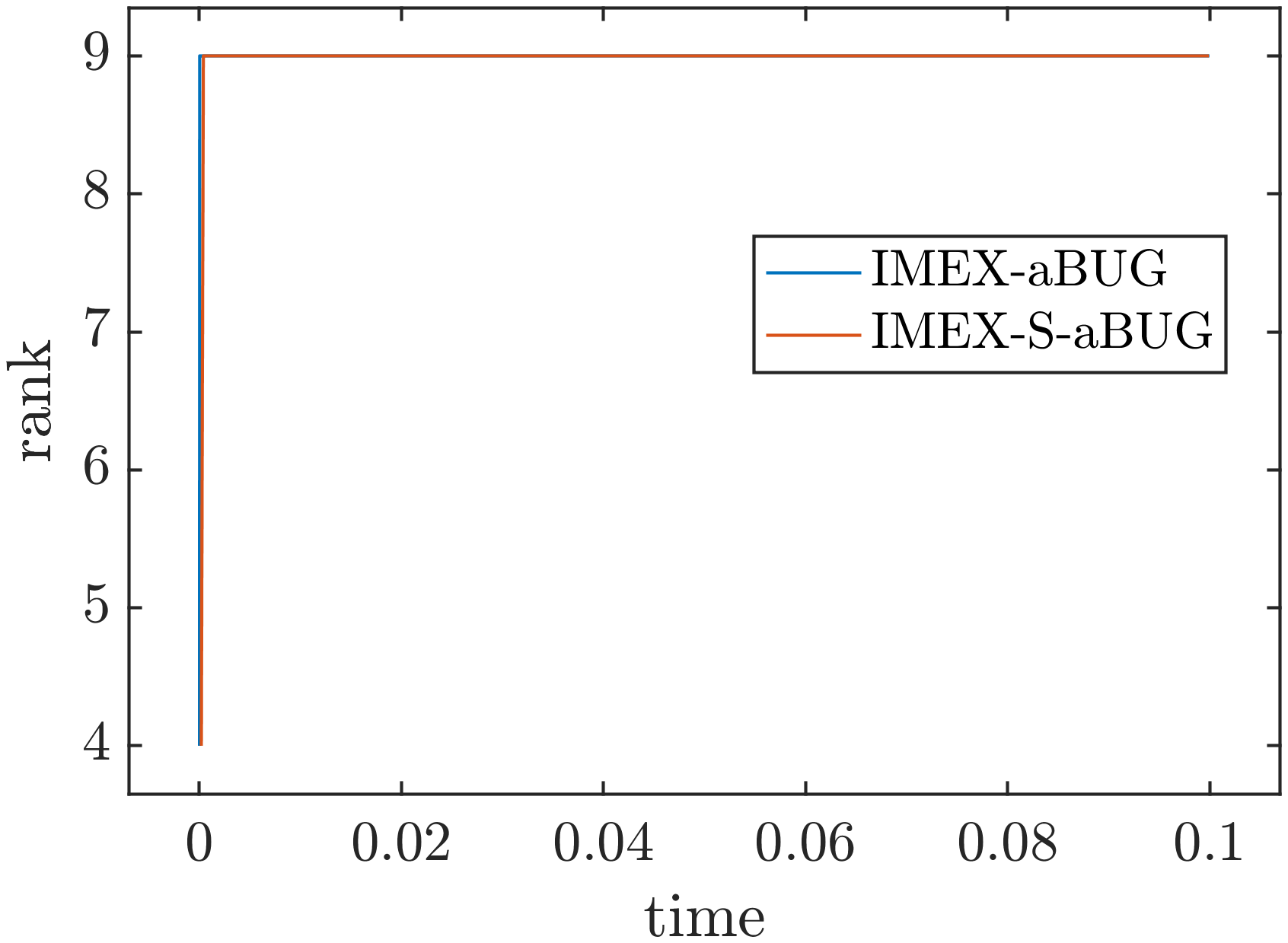}
    \end{subfigure}
    \caption{Gaussian initial data in two dimensions ($\varepsilon = 10^{-6}$).
        Density slice of solutions along $y = 0$ (left) at time $T = 0.1$,
        energy dissipation (middle),
        and rank of the microscopic component $\bm{g}$ of IMEX-S-BUG and IMEX-S-aBUG method (right).
    } \label{fig:Gaussian_2d_more}
\end{figure}

\subsection{Lattice problem}

We simulate a simplified fuel rod assembly on $[0,7]^2$ featuring checkerboard-patterned absorbing and scattering regions as defined in Figure~\ref{fig:lattice_2d_setup}.
The initial condition is given by
\[
    f(t,x,y,\theta,\mu) = \frac{1}{4\pi \Sigma^2} \exp\left(- \frac{(x-3.5)^2+(y-3.5)^2}{4\Sigma^2}\right), \quad \Sigma^2 = 10^{-2}.
\]
Using a $128 \times 128$ spatial mesh, $S_{32}$ quadrature, and $r=100$, the methods are tested in the kinetic regime ($\varepsilon=1, T=2.0$). As illustrated in Figure~\ref{fig:lattice_2d_base} and Figure~\ref{fig:lattice_2d_more}, both the IMEX and IMEX-S BUG schemes have comparable results, only the IMEX-S schemes exhibit slightly larger numerical dissipation; but this discrepancy can be effectively mitigated by employing a little smaller time step $\Delta t$.

\begin{figure}[htbp]
    \centering
    \begin{subfigure}[c]{0.3\textwidth}
        \centering
        \includegraphics[width=\textwidth]{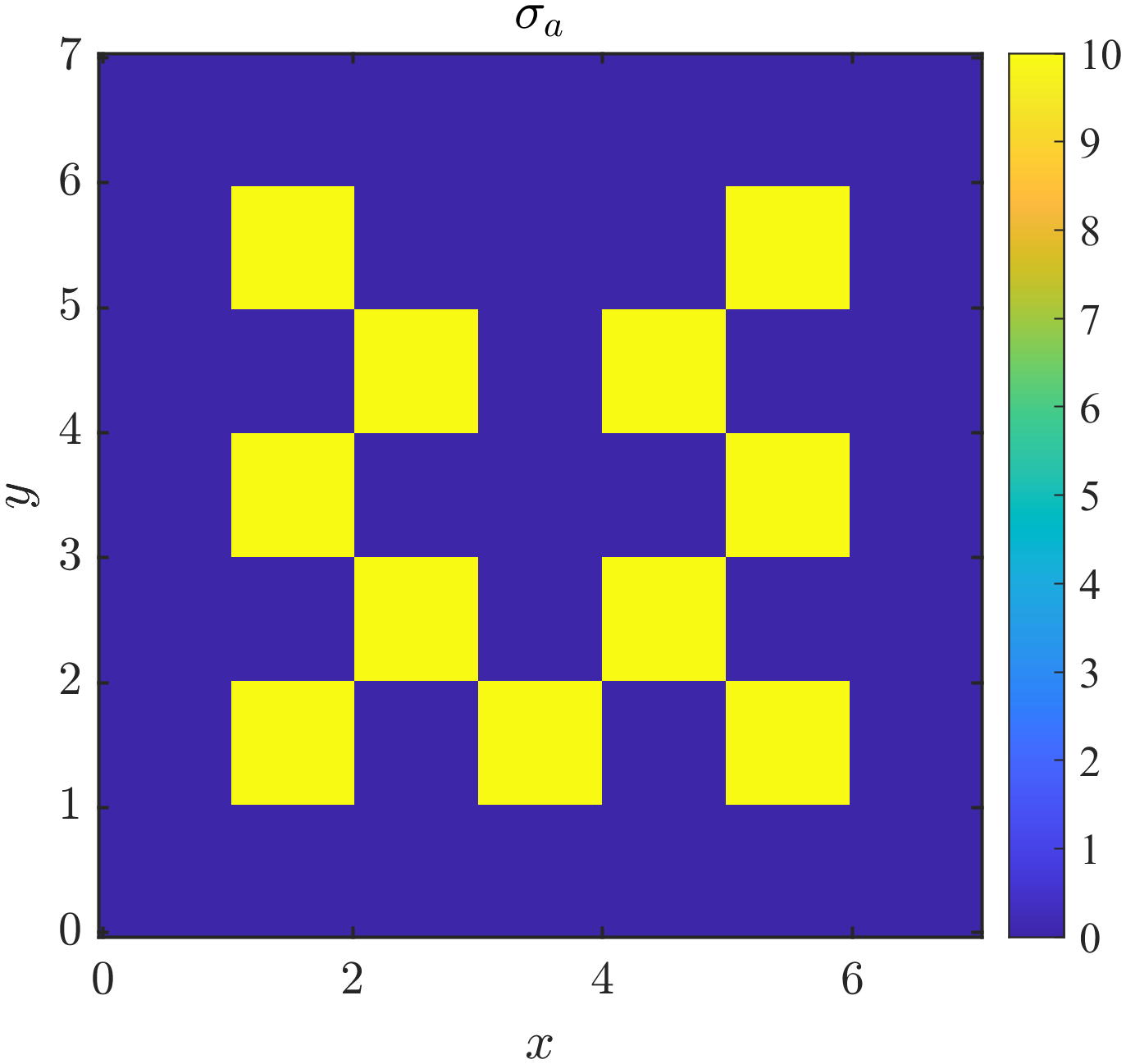}
    \end{subfigure}
    \begin{subfigure}[c]{0.3\textwidth}
        \centering
        \includegraphics[width=\textwidth]{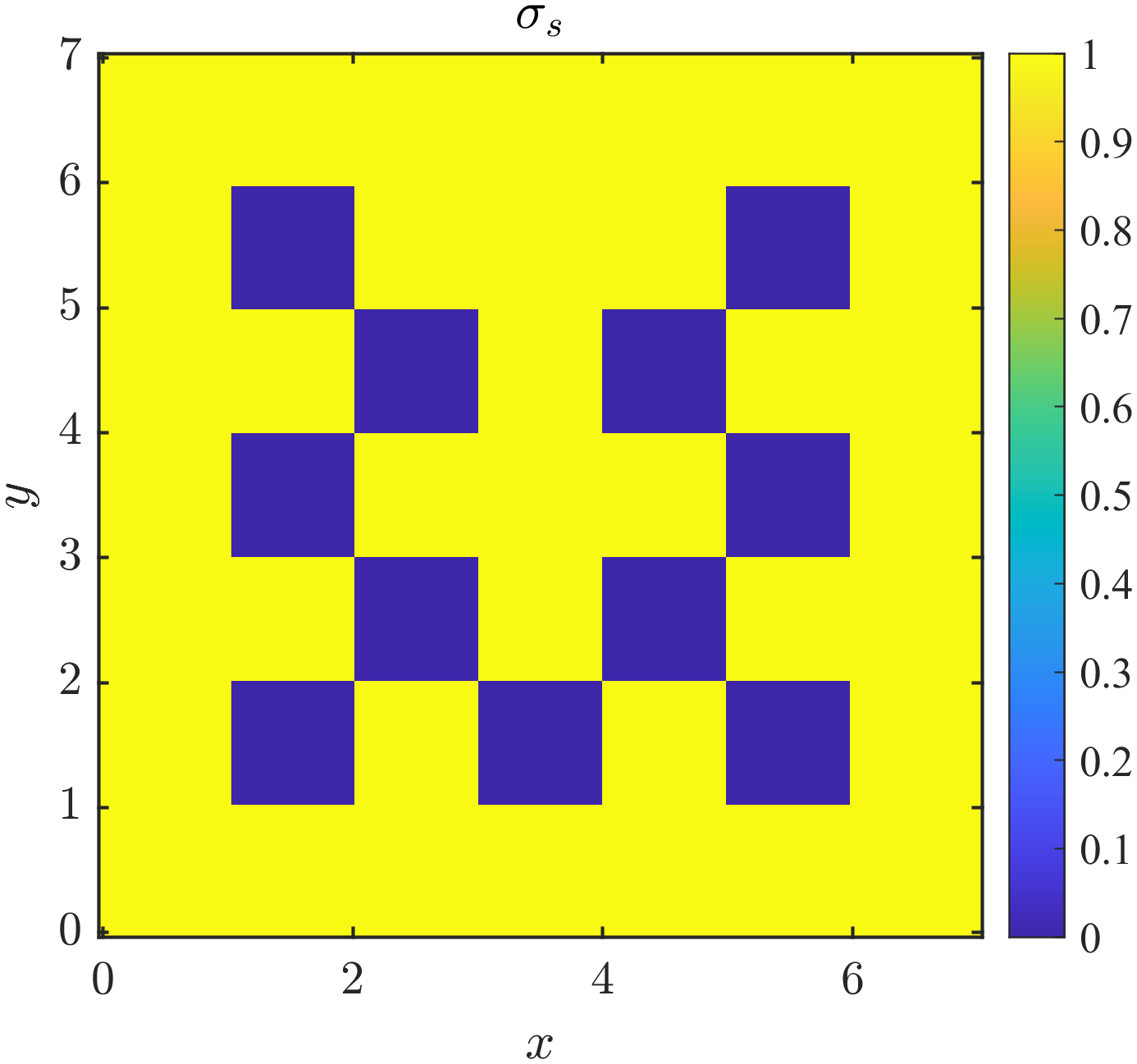}
    \end{subfigure}
    \begin{subfigure}[c]{0.3\textwidth}
        \centering
        \includegraphics[width=\textwidth]{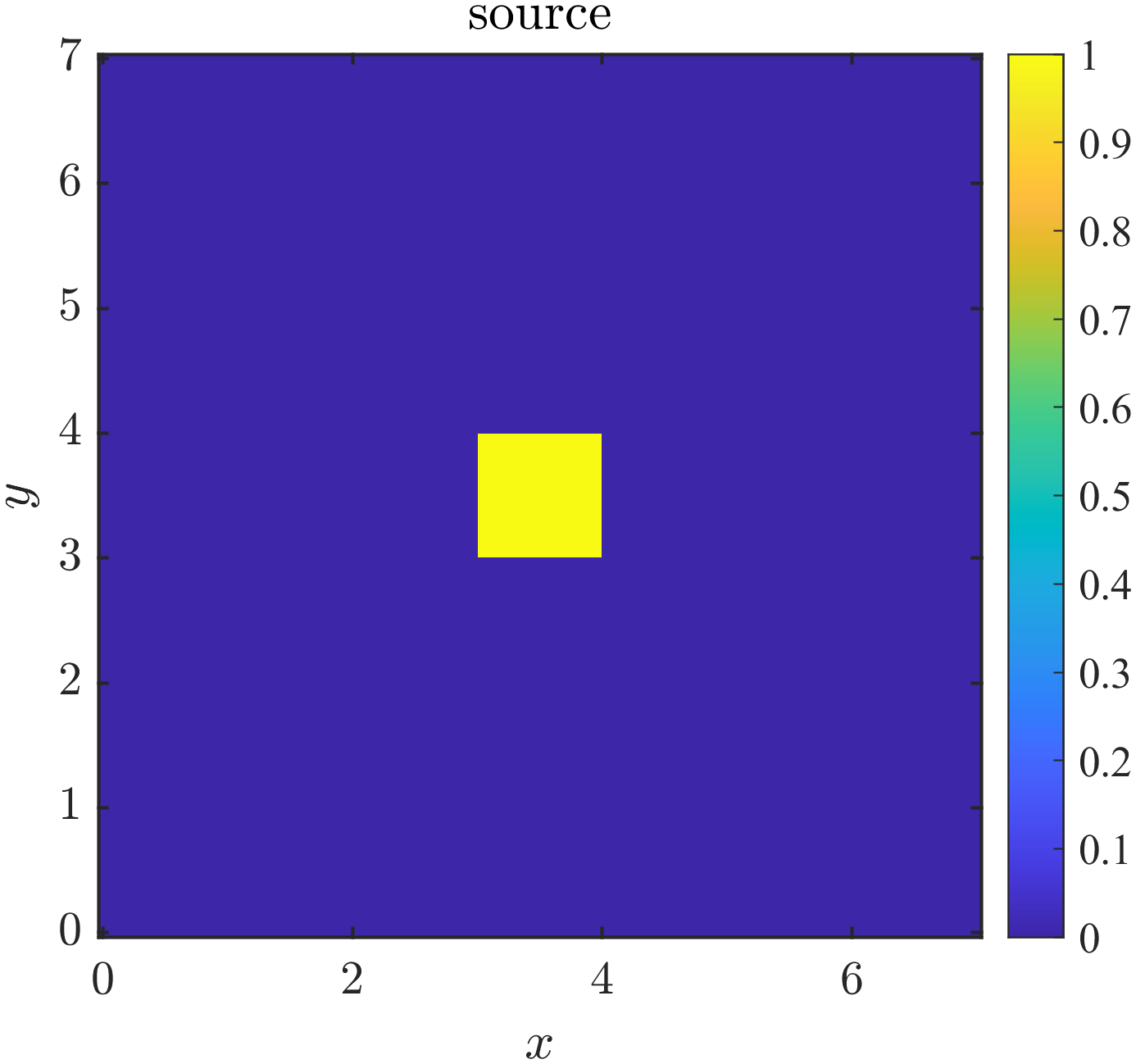}
    \end{subfigure}
    \caption{Material data used in the lattice example.
        Profiles of absorption coefficient $\sigma^a$ (left), scattering coefficient $\sigma^s$ (middle),
        and source function $\Phi$ (right).
    } \label{fig:lattice_2d_setup}
\end{figure}

\begin{figure}[htbp]
    \centering
    \begin{subfigure}[c]{0.3\textwidth}
        \centering
        \includegraphics[width=\textwidth]{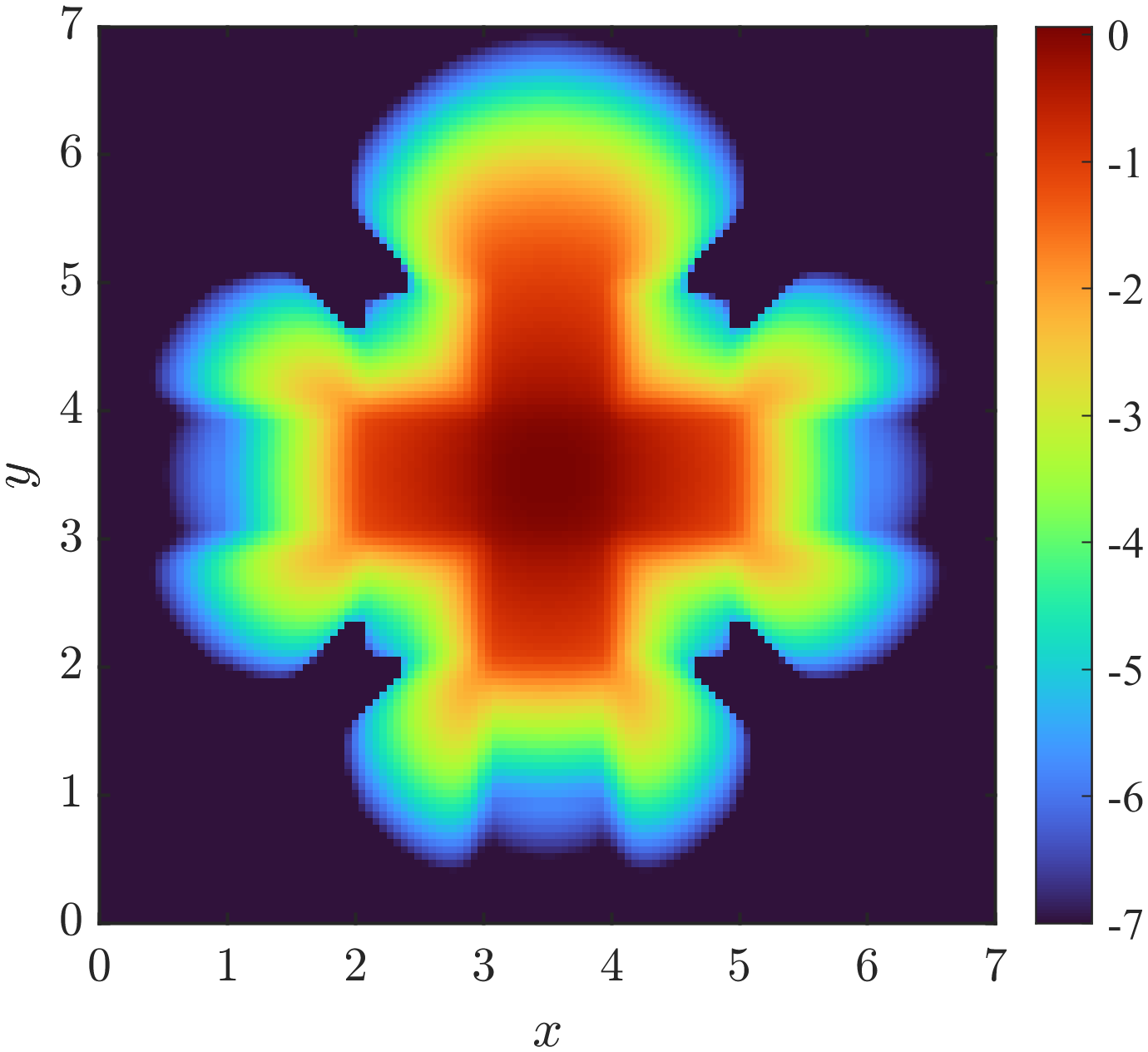}
    \end{subfigure}
    \begin{subfigure}[c]{0.3\textwidth}
        \centering
        \includegraphics[width=\textwidth]{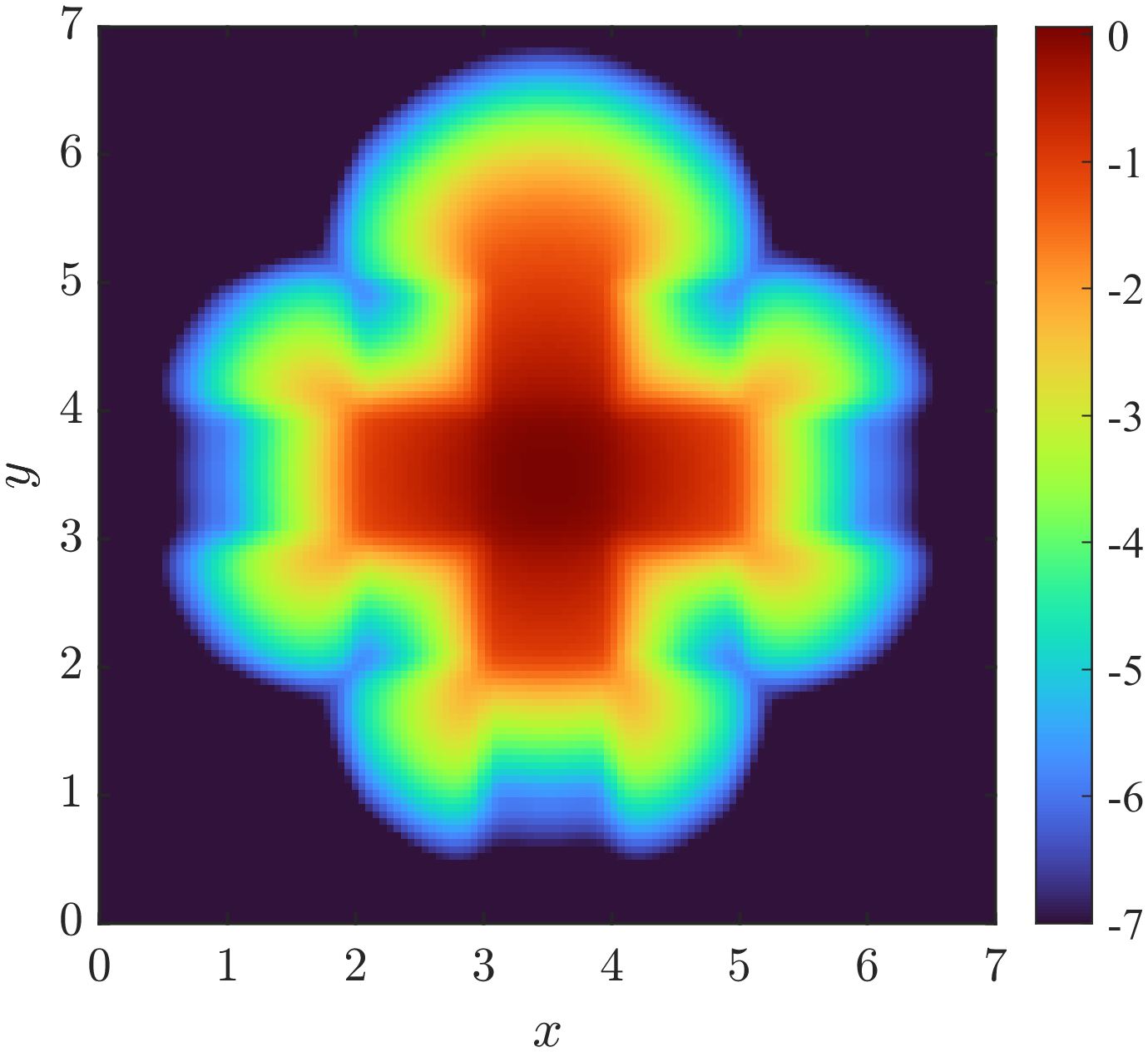}
    \end{subfigure}
    \begin{subfigure}[c]{0.3\textwidth}
        \centering
        \includegraphics[width=\textwidth]{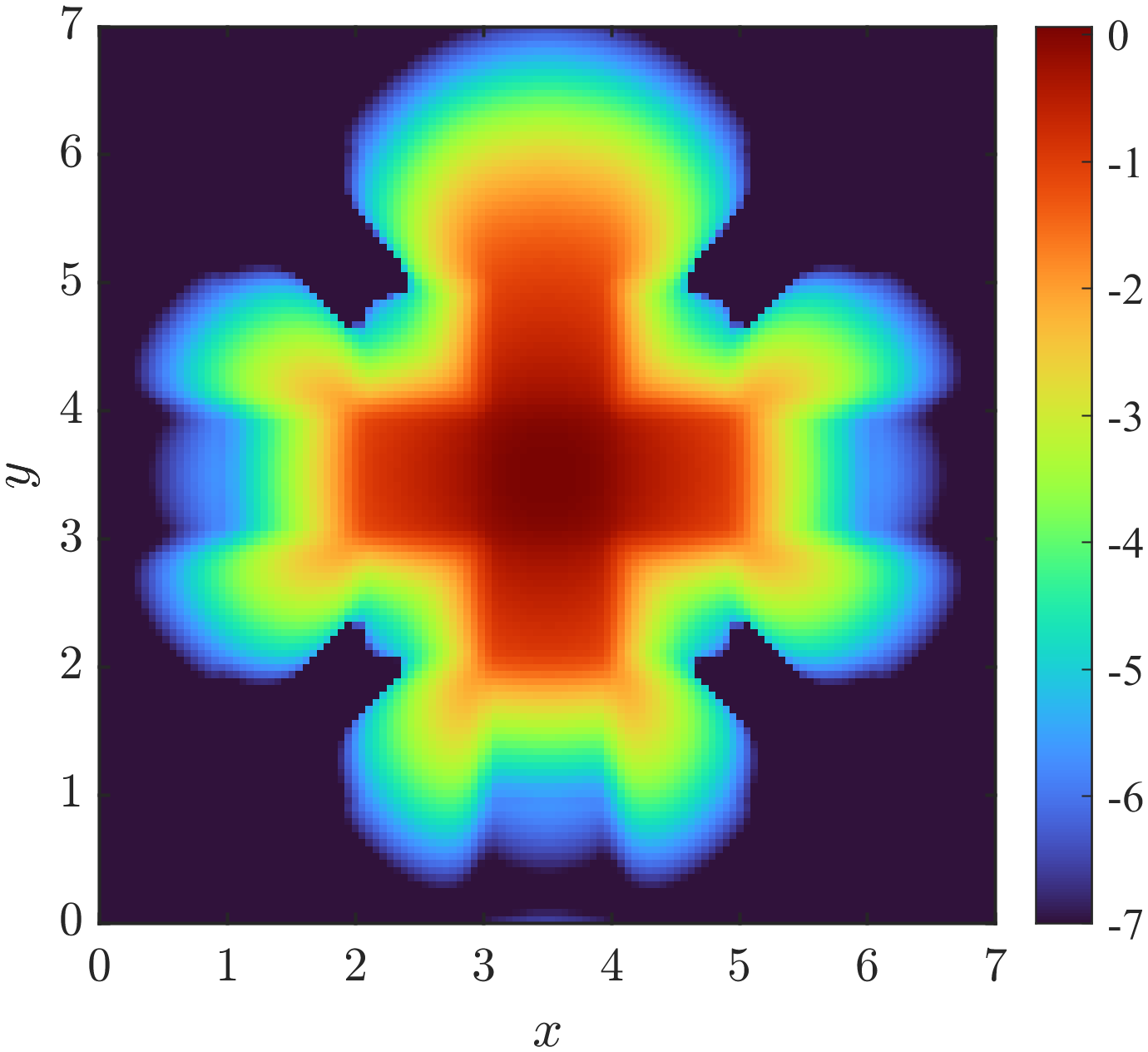}
    \end{subfigure}

    \caption{Lattice problem ($\varepsilon=1$).
        Contour plots of the log density on a $128 \times 128$ mesh at time $T = 2.0$.
        Left: IMEX-BUG $\Delta t = \Delta t_{(E)}$;
        Middle: IMEX-S-BUG $\Delta t = \Delta t_{(I)}$;
        Right: IMEX-S-BUG $\Delta t = 0.2 \Delta t_{(I)}$.
    } \label{fig:lattice_2d_base}
\end{figure}

\begin{figure}[htbp]
    \centering
    \begin{subfigure}[c]{0.3\textwidth}
        \centering
        \includegraphics[width=\textwidth]{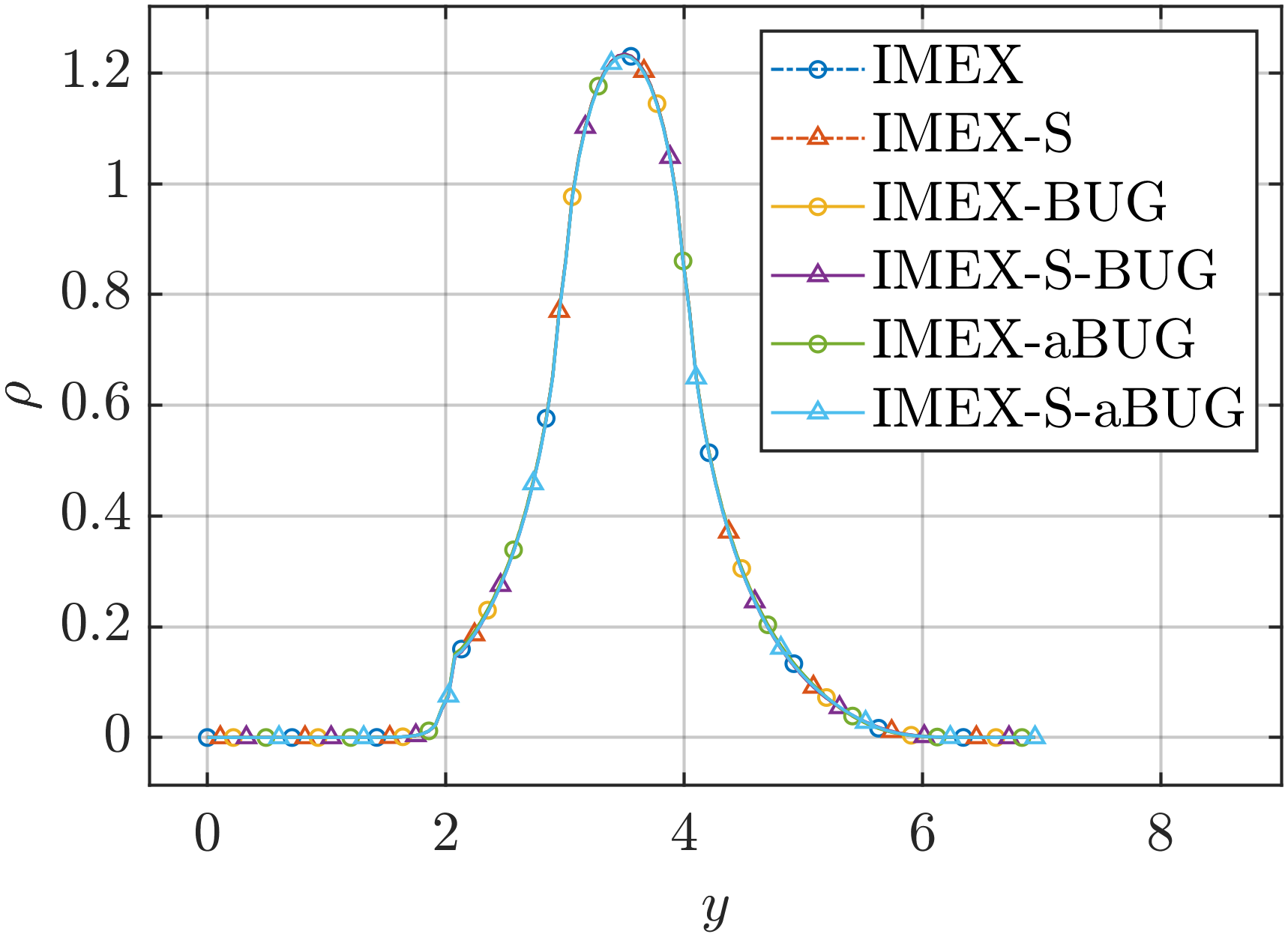}
    \end{subfigure}
    \begin{subfigure}[c]{0.3\textwidth}
        \centering
        \includegraphics[width=\textwidth]{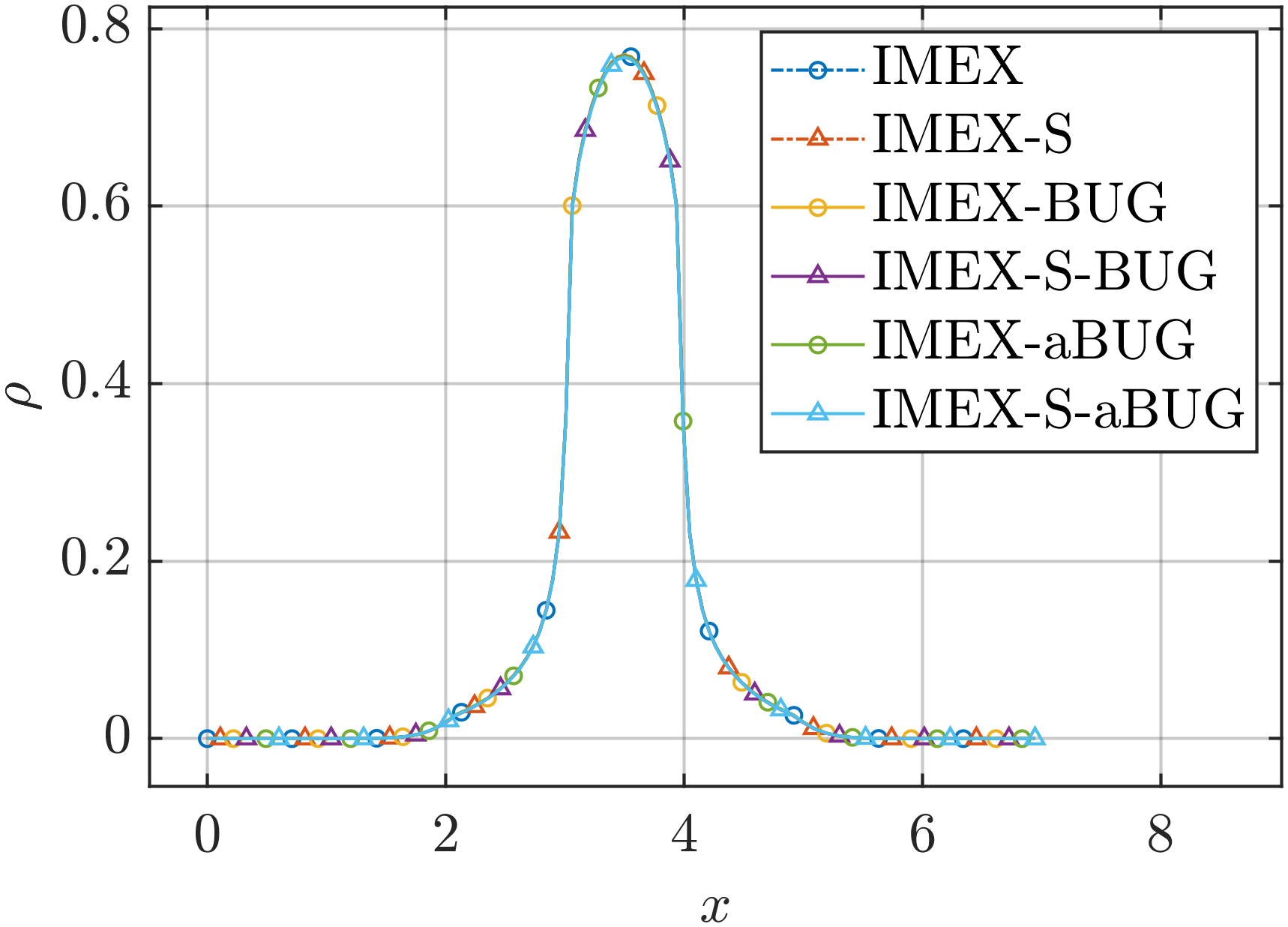}
    \end{subfigure}
    \begin{subfigure}[c]{0.3\textwidth}
        \centering
        \includegraphics[width=\textwidth]{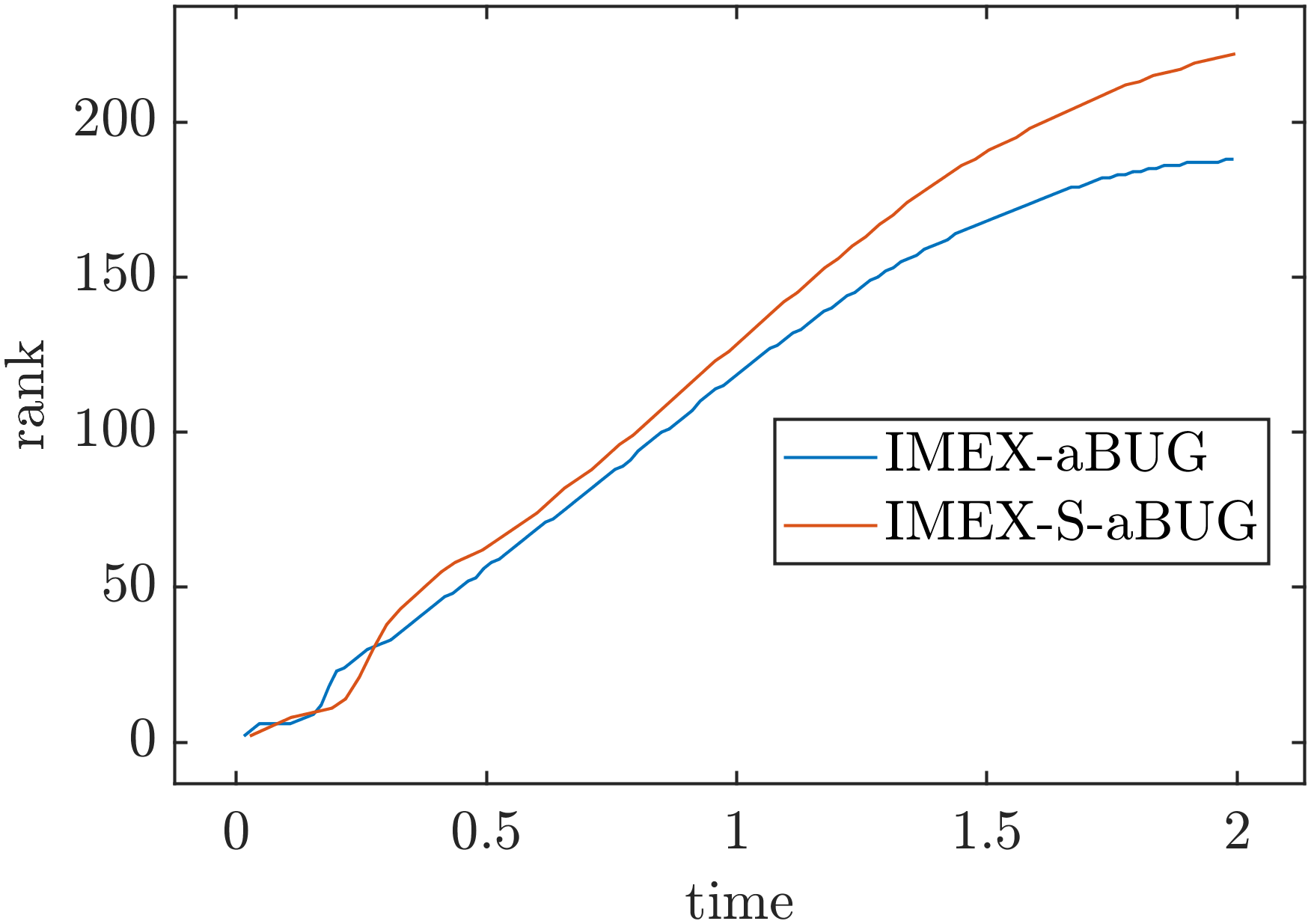}
    \end{subfigure}
    \caption{Lattice problem ($\varepsilon=1$).
        Density slice of solutions at time $T = 2.0$ along $x = 3.5$ (left) and $y = 4.047$ (middle), and the rank of the microscopic component $\bm{g}$ for the IMEX-S-BUG and IMEX-S-aBUG methods (right).
    } \label{fig:lattice_2d_more}
\end{figure}

\section{Conclusion} \label{sec:conclusion}

In this work, we have developed an asymptotic-preserving dynamical low-rank method for the multiscale linear transport equation.
By combining a macro-micro decomposition with a dynamical low-rank representation for the microscopic component and the IMEX-S temporal discretization, the scheme efficiently handles the high-dimensional phase space while remaining stable in both kinetic and diffusive regimes.
The application of the Schur complement allows the elimination of stiff microscopic terms, resulting in a reduced system for the macroscopic density and preserving the AP property without relying on weighting functions.
Additionally, the energy stability of the method is established by introducing a low-rank formulation consistent with the $S_N$ discrete energy.
Numerical experiments demonstrate that the method accurately captures the correct asymptotic limits, maintains energy stability, and significantly reduces both computational cost and memory requirements.
Future work will extend this framework to higher-order discretizations and other kinetic equations.

\bibliographystyle{plain}
\bibliography{references}

@article{appelo2025robust,
  author    = {Appel{\"o}, Daniel and Cheng, Yingda},
  title     = {Robust implicit adaptive low rank time-stepping methods for matrix differential equations},
  journal   = {Journal of Scientific Computing},
  year      = {2025},
  number    = {3},
  pages     = {81},
  publisher = {Springer},
  volume    = {102}
}

@article{bachmayr2023low,
  author    = {Bachmayr, Markus},
  title     = {{Low-rank tensor methods for partial differential equations}},
  journal   = {Acta Numerica},
  year      = {2023},
  pages     = {1--121},
  publisher = {Cambridge University Press},
  volume    = {32}
}

@article{ceruti2022rank,
  author    = {Ceruti, Gianluca and Kusch, Jonas and Lubich, Christian},
  title     = {{A rank-adaptive robust integrator for dynamical low-rank approximation}},
  journal   = {BIT Numerical Mathematics},
  year      = {2022},
  number    = {4},
  pages     = {1149--1174},
  publisher = {Springer},
  volume    = {62}
}

@article{ceruti2022unconventional,
  author    = {Ceruti, Gianluca and Lubich, Christian},
  title     = {{An unconventional robust integrator for dynamical low-rank approximation}},
  journal   = {BIT Numerical Mathematics},
  year      = {2022},
  number    = {1},
  pages     = {23--44},
  publisher = {Springer},
  volume    = {62}
}

@article{ceruti2024parallel,
  author    = {Ceruti, Gianluca and Kusch, Jonas and Lubich, Christian},
  title     = {A parallel rank-adaptive integrator for dynamical low-rank approximation},
  journal   = {SIAM Journal on Scientific Computing},
  year      = {2024},
  number    = {3},
  pages     = {B205--B228},
  publisher = {SIAM},
  volume    = {46}
}

@article{ceruti2024robust,
  author    = {Ceruti, Gianluca and Einkemmer, Lukas and Kusch, Jonas and Lubich, Christian},
  title     = {A robust second-order low-rank BUG integrator based on the midpoint rule},
  journal   = {BIT Numerical Mathematics},
  year      = {2024},
  number    = {3},
  pages     = {30},
  publisher = {Springer},
  volume    = {64}
}

@article{coughlin2024robust,
  author    = {Coughlin, Jack and Hu, Jingwei and Shumlak, Uri},
  title     = {Robust and conservative dynamical low-rank methods for the Vlasov equation via a novel macro-micro decomposition},
  journal   = {Journal of Computational Physics},
  year      = {2024},
  pages     = {113055},
  publisher = {Elsevier},
  volume    = {509}
}

@article{ding2021dynamical,
  author        = {Ding, Zhiyan and Einkemmer, Lukas and Li, Qin},
  title         = {Dynamical Low-Rank Integrator for the Linear {{Boltzmann}} Equation: Error Analysis in the Diffusion Limit},
  journal       = {SIAM Journal on Numerical Analysis},
  year          = 2021,
  archiveprefix = {arXiv},
  doi           = {10.1137/20M1380788},
  issn          = {0036-1429, 1095-7170},
  number        = {4},
  pages         = {2254--2285},
  shorttitle    = {Dynamical Low-Rank Integrator for the Linear {{Boltzmann}} Equation},
  volume        = {59}
}

@article{einkemmer2018lowrank,
  author  = {Einkemmer, Lukas and Lubich, Christian},
  title   = {A {{Low-Rank Projector-Splitting Integrator}} for the {{Vlasov--Poisson Equation}}},
  journal = {SIAM Journal on Scientific Computing},
  year    = 2018,
  doi     = {10.1137/18M116383X},
  issn    = {1064-8275, 1095-7197},
  number  = {5},
  pages   = {B1330-B1360},
  volume  = {40}
}

@article{einkemmer2021asymptoticpreserving,
  author  = {Einkemmer, Lukas and Hu, Jingwei and Wang, Yubo},
  title   = {An Asymptotic-Preserving Dynamical Low-Rank Method for the Multi-Scale Multi-Dimensional Linear Transport Equation},
  journal = {Journal of Computational Physics},
  year    = 2021,
  doi     = {10.1016/j.jcp.2021.110353},
  issn    = {00219991},
  pages   = {110353},
  volume  = {439}
}

@article{einkemmer2021efficient,
  author    = {Einkemmer, Lukas and Hu, Jingwei and Ying, Lexing},
  title     = {An {{Efficient Dynamical Low-Rank Algorithm}} for the {{Boltzmann-BGK Equation Close}} to the {{Compressible Viscous Flow Regime}}},
  journal   = {SIAM Journal on Scientific Computing},
  year      = 2021,
  doi       = {10.1137/21M1392772},
  issn      = {1064-8275},
  number    = {5},
  pages     = {B1057-B1080},
  publisher = {{Society for Industrial and Applied Mathematics}},
  volume    = {43}
}

@article{einkemmer2021mass,
  author  = {Einkemmer, Lukas and Joseph, Ilon},
  title   = {A Mass, Momentum, and Energy Conservative Dynamical Low-Rank Scheme for the {{Vlasov}} Equation},
  journal = {Journal of Computational Physics},
  year    = 2021,
  doi     = {10.1016/j.jcp.2021.110495},
  issn    = {00219991},
  pages   = {110495},
  volume  = {443}
}

@article{einkemmer2023conservation,
  author  = {Einkemmer, Lukas Daniel and Kusch, Jonas and Schotth{\"o}fer, Steffen},
  title   = {Conservation Properties of the Augmented Basis Update$\backslash$\& Galerkin Integrator for Kinetic Problems},
  journal = {Available at SSRN 4668132},
  year    = {2023}
}

@article{einkemmer2024asymptoticpreserving,
  author  = {Einkemmer, Lukas and Hu, Jingwei and Kusch, Jonas},
  title   = {Asymptotic-{{Preserving}} and {{Energy Stable Dynamical Low-Rank Approximation}}},
  journal = {SIAM Journal on Numerical Analysis},
  year    = 2024,
  doi     = {10.1137/23M1547603},
  issn    = {0036-1429, 1095-7170},
  number  = {1},
  pages   = {73--92},
  volume  = {62}
}

@article{einkemmer2025construction,
  author        = {Einkemmer, Lukas and Kusch, Jonas and Schotth{\"o}fer, Steffen},
  title         = {Construction of High-Order Conservative Basis-Update and {{Galerkin}} Dynamical Low-Rank Integrators},
  journal       = {arXiv preprint arXiv:2311.06399},
  year          = {2025},
  archiveprefix = {arXiv},
  eprint        = {2311.06399},
  primaryclass  = {math.NA},
  url           = {https://arxiv.org/abs/2311.06399}
}

@article{einkemmer2025review,
  author  = {Einkemmer, Lukas and Kormann, Katharina and Kusch, Jonas and McClarren, Ryan G. and Qiu, Jing-Mei},
  title   = {A Review of Low-Rank Methods for Time-Dependent Kinetic Simulations},
  journal = {Journal of Computational Physics},
  year    = 2025,
  doi     = {10.1016/j.jcp.2025.114191},
  issn    = {00219991},
  pages   = {114191},
  volume  = {538}
}

@book{ganapol2008analytical,
  author    = {Ganapol, Barry D.},
  title     = {Analytical Benchmarks for Nuclear Engineering Applications: Case Studies in Neutron Transport Theory},
  year      = {2008},
  address   = {Issy-les-Moulineaux, France},
  isbn      = {978-92-64-99056-2},
  note      = {NEA No. 6292},
  number    = {NEA/DB/DOC(2008)1},
  publisher = {Nuclear Energy Agency, Organisation for Economic Co-operation and Development},
  series    = {OECD/NEA Data Bank}
}

@article{grasedyck2013literature,
  author    = {Grasedyck, Lars and Kressner, Daniel and Tobler, Christine},
  title     = {{A literature survey of low-rank tensor approximation techniques}},
  journal   = {GAMM-Mitteilungen},
  year      = {2013},
  number    = {1},
  pages     = {53--78},
  publisher = {Wiley Online Library},
  volume    = {36}
}

@article{guo2024conservative,
  author    = {Guo, Wei and Qiu, Jing-Mei},
  title     = {A {{Conservative Low Rank Tensor Method}} for the {{Vlasov Dynamics}}},
  journal   = {SIAM Journal on Scientific Computing},
  year      = 2024,
  doi       = {10.1137/22M1473960},
  issn      = {1064-8275},
  number    = {1},
  pages     = {A232-A263},
  publisher = {{Society for Industrial and Applied Mathematics}},
  volume    = {46}
}

@article{guo2024local,
  author    = {Guo, Wei and Ema, Jannatul Ferdous and Qiu, Jing-Mei},
  title     = {A local macroscopic conservative ({LoMaC}) low rank tensor method with the discontinuous Galerkin method for the Vlasov dynamics},
  journal   = {Communications on Applied Mathematics and Computation},
  year      = {2024},
  number    = {1},
  pages     = {550--575},
  publisher = {Springer},
  volume    = {6}
}

@book{hackbusch2012tensor,
  author    = {Hackbusch, Wolfgang},
  title     = {{Tensor Spaces and Numerical Tensor Calculus}},
  year      = {2012},
  publisher = {Springer},
  volume    = {42}
}

@incollection{hu2017asymptotic,
  author    = {Hu, Jingwei and Jin, Shi and Li, Qin},
  title     = {Asymptotic-preserving schemes for multiscale hyperbolic and kinetic equations},
  year      = {2017},
  booktitle = {Handbook of Numerical Analysis},
  pages     = {103--129},
  publisher = {Elsevier},
  volume    = {18}
}

@article{jin1999efficient,
  author    = {Jin, Shi},
  title     = {{Efficient asymptotic-preserving (AP) schemes for some multiscale kinetic equations}},
  journal   = {SIAM Journal on Scientific Computing},
  year      = {1999},
  number    = {2},
  pages     = {441--454},
  publisher = {SIAM},
  volume    = {21}
}

@article{jin2010asymptotic,
  author    = {Jin, Shi},
  title     = {{Asymptotic preserving (AP) schemes for multiscale kinetic and hyperbolic equations: a review}},
  journal   = {Lecture notes for summer school on methods and models of kinetic theory (M\&MKT), Porto Ercole (Grosseto, Italy)},
  year      = {2010},
  pages     = {177--216},
  publisher = {Citeseer}
}

@article{jin2022asymptotic,
  author    = {Jin, Shi},
  title     = {Asymptotic-preserving schemes for multiscale physical problems},
  journal   = {Acta Numerica},
  year      = {2022},
  pages     = {415--489},
  publisher = {Cambridge University Press},
  volume    = {31}
}

@book{khoromskij2018tensor,
  author    = {Khoromskij, Boris N},
  title     = {{Tensor Numerical Methods in Scientific Computing}},
  year      = {2018},
  publisher = {Walter de Gruyter GmbH \& Co KG},
  volume    = {19}
}

@article{koch2007dynamical,
  author    = {Koch, Othmar and Lubich, Christian},
  title     = {{Dynamical low-rank approximation}},
  journal   = {SIAM Journal on Matrix Analysis and Applications},
  year      = {2007},
  number    = {2},
  pages     = {434--454},
  publisher = {SIAM},
  volume    = {29}
}

@article{kupper2016asymptotic,
  author    = {Kupper, Kerstin and Frank, Martin and Jin, Shi},
  title     = {An asymptotic preserving two-dimensional staggered grid method for multiscale transport equations},
  journal   = {SIAM Journal on Numerical Analysis},
  year      = {2016},
  number    = {1},
  pages     = {440--461},
  publisher = {SIAM},
  volume    = {54}
}

@article{lemou2008new,
  author    = {Lemou, Mohammed and Mieussens, Luc},
  title     = {A new asymptotic preserving scheme based on micro-macro formulation for linear kinetic equations in the diffusion limit},
  journal   = {SIAM Journal on Scientific Computing},
  year      = {2008},
  number    = {1},
  pages     = {334--368},
  publisher = {SIAM},
  volume    = {31}
}

@book{lewis1983computational,
  author    = {Lewis, Elmer Eugene and Miller, Warren F},
  title     = {{Computational Methods of Neutron Transport}},
  year      = {1983},
  publisher = {John Wiley and Sons, Inc., New York, NY}
}

@article{liu2004boltzmann,
  author    = {Liu, Tai-Ping and Yu, Shih-Hsien},
  title     = {Boltzmann equation: micro-macro decompositions and positivity of shock profiles},
  journal   = {Communications in mathematical physics},
  year      = {2004},
  number    = {1},
  pages     = {133--179},
  publisher = {Springer},
  volume    = {246}
}

@article{liu2010analysis,
  author  = {Liu, Jian-Guo and Mieussens, Luc},
  title   = {Analysis of an {{Asymptotic Preserving Scheme}} for {{Linear Kinetic Equations}} in the {{Diffusion Limit}}},
  journal = {SIAM Journal on Numerical Analysis},
  year    = 2010,
  doi     = {10.1137/090772770},
  issn    = {0036-1429, 1095-7170},
  number  = {4},
  pages   = {1474--1491},
  volume  = {48}
}

@article{patwardhan2025asymptoticpreserving,
  author        = {Patwardhan, Chinmay and Frank, Martin and Kusch, Jonas},
  title         = {Asymptotic-Preserving and Energy Stable Dynamical Low-Rank Approximation for Thermal Radiative Transfer Equations},
  journal       = {Multiscale Modeling \& Simulation},
  year          = 2025,
  archiveprefix = {arXiv},
  doi           = {10.1137/24M1646303},
  issn          = {1540-3459, 1540-3467},
  number        = {1},
  pages         = {278--312},
  volume        = {23}
}

@article{patwardhan2025parallel,
  author  = {Patwardhan, Chinmay and Kusch, Jonas},
  title   = {A {{Parallel}}, {{Energy-Stable Low-Rank Integrator}} for {{Nonlinear Multi-Scale Thermal Radiative Transfer}}},
  journal = {arXiv preprint arXiv:2502.20883},
  year    = {2025}
}

@article{peng2020low,
  author    = {Peng, Zhuogang and McClarren, Ryan G and Frank, Martin},
  title     = {A low-rank method for two-dimensional time-dependent radiation transport calculations},
  journal   = {Journal of Computational Physics},
  year      = {2020},
  pages     = {109735},
  publisher = {Elsevier},
  volume    = {421}
}

@article{peng2020stabilityenhanced,
  author  = {Peng, Zhichao and Cheng, Yingda and Qiu, Jing-Mei and Li, Fengyan},
  title   = {Stability-Enhanced {{AP IMEX-LDG}} Schemes for Linear Kinetic Transport Equations under a Diffusive Scaling},
  journal = {Journal of Computational Physics},
  year    = 2020,
  doi     = {10.1016/j.jcp.2020.109485},
  issn    = {00219991},
  pages   = {109485},
  volume  = {415}
}

@article{peng2021asymptotic,
  author  = {Peng, Zhichao and Li, Fengyan},
  title   = {Asymptotic {{Preserving IMEX-DG-S Schemes}} for {{Linear Kinetic Transport Equations Based}} on {{Schur Complement}}},
  journal = {SIAM Journal on Scientific Computing},
  year    = 2021,
  doi     = {10.1137/20M134486X},
  issn    = {1064-8275, 1095-7197},
  number  = {2},
  pages   = {A1194-A1220},
  volume  = {43}
}

@article{peng2021highorder,
  author  = {Peng, Zhuogang and McClarren, Ryan G.},
  title   = {A High-Order/Low-Order ({{HOLO}}) Algorithm for Preserving Conservation in Time-Dependent Low-Rank Transport Calculations},
  journal = {Journal of Computational Physics},
  year    = 2021,
  doi     = {10.1016/j.jcp.2021.110672},
  issn    = {00219991},
  pages   = {110672},
  volume  = {447}
}

@article{sands2025high,
  author    = {Sands, William A and Guo, Wei and Qiu, Jing-Mei and Xiong, Tao},
  title     = {High-order adaptive rank integrators for multiscale linear kinetic transport equations in the hierarchical tucker format},
  journal   = {SIAM Journal on Scientific Computing},
  year      = {2025},
  number    = {6},
  pages     = {A3383--A3412},
  publisher = {SIAM},
  volume    = {47}
}

@book{zhang2006schur,
  author    = {Zhang, Fuzhen},
  title     = {The Schur complement and its applications},
  year      = {2006},
  publisher = {Springer Science \& Business Media},
  volume    = {4}
}


\end{document}